\def\spacingset#1{\renewcommand{\baselinestretch}%
{#1}\small\normalsize} \spacingset{1.2}
\def\nbI{\ensuremath{\mathrm{1\!l}}}
\newtheorem{theorem}{Theorem}
\newtheorem{lemma}{Lemma}
\newtheorem{corollary}{Corollary}
\newtheorem{proposition}{Proposition}
\newtheorem{remark}{Remark\/}
\newenvironment{proof}[1][Proof]{\textbf{#1.} }{\ \rule{0.5em}{0.5em}}
\begin{document}

\title{On detecting {\it weak} changes in the mean of CHARN models}

\author{{\bf Joseph Ngatchou-Wandji}$^{a}$\footnote{Corresponding author} \ and {\bf Marwa Ltaifa}$^b$\\
\\
 $^a$EHESP Sorbonne Paris Cit\'e  and 
\\
 Institut \'Elie Cartan de Lorraine, 54506 Vandoeuvre-l\`es-Nancy cedex, France.\\ E-mail: joseph.ngatchou-wandji@univ-lorraine.fr\\
\\
$^b$Institut \'Elie Cartan de Lorraine, 54506 Vandoeuvre-l\`es-Nancy cedex, France and
\\ LAMMDA-ESST Hammam-Sousse, University of Sousse, 4011 Hammam-Sousse, Tunisia.
 \\  E-mail: ltaifa.marwa@gmail.com}

\date{}
\maketitle

\noindent
{\bf Abstract}
We study a likelihood ratio test for detecting multiple {\it weak} changes in the mean of a class of  CHARN models.
The locally asymptotically normal (LAN) structure of the family of likelihoods under study is established. It results that the test is asymptotically optimal, and an explicit form of its asymptotic local power is given as a function of candidates change locations and changes magnitudes. Strategies for weak change-points detection and their locations estimates are described. The estimates are obtained as the time indices maximizing an estimate of the local power. A simulation study shows the good performance of our methods compared to some existing approaches. Our results are also applied to three sets of real data.
\vspace{.5cm}

\noindent
{\bf Running headline.} Change detection in CHARN models
\vspace{.5cm}

\noindent
{\bf AMS 2000 subject classification.} MSC 62M10; 62M02; 62M05; 62F03; 62F05.
\vspace{.5cm}

\noindent
{\bf Key words.}
Change-points; LAN; Likelihood-ratio tests;
CHARN models.
\vspace{.25cm}

\section{Introduction}
This paper deals with {\it weak} changes detection in the mean of  Conditional Heteroscedastic AutoRegressive Nonlinear (CHARN) models (see \cite{hardle_ty}). We mean by {\it weak} change that whose magnitude is too small. Such a change can be produced by a phenomenon which changes gradually around a given date.
A weak change may be a precursor sign of upcoming more significant changes indicating a critical behavior.
 This can happen in economics and finance, public health, biosciences, engineering, climatology, hydrology, linguistics, genomics and in many other domains.
% that it may be considered as noise effect. 
Because weak changes may be confused with noise effects, because they may be missing by usual change detection methods, it can be of primary importance to find {\it screening} methods more adapted to their detection.

Since \cite{page1}, change-point study has attracted a lot of attention, and has been the subject of a large amount of work. The monographs of \cite{bas} and \cite{csor} present the basic notions and theory on the subject. For long, the studies have been done in the context of independent observations as in 
 \cite{cher}, \cite{kan}, \cite{gard}, \cite{mac}, \cite{mat}, \cite{hac},
\cite{yao}, \cite{sen}, \cite{pet} and \cite{wolfe}. These last decades there is a growing attention on change-points study in dependent data. In this context, 
\cite{laj2} propose several methods of on-line change detection in the variance of an heteroskedastic time series. \cite{vogel1} constructs Wald-type tests for breaks detection in the trend of a dynamic time series. \cite{vogel2} and \cite{laj2} study tests for detecting change in the mean of various time series models. 
\cite{bardet_kw}, \cite{bardet_k},  \cite{kengne}, 
 \cite{berk}, \cite{gom0}, \cite{gom} and \cite{mohr} study tests for change detection in the parameters of various sub-classes of nonlinear or nonlinear heteroscedastic time series models.
%Most of the techniques and approaches used are mainly based either on  testing for the existence of changes or for their locations, or on estimating the locations.
%
%
%Non-exhaustive references on estimation approaches are 
%\cite{hardle_t},  
%
\cite{ciuperca} gives a general criterion for estimating the number of breaks in a multidimensional nonlinear model. She establishes the consistency of the estimator obtained. 
%\cite{chen} 
%
%
\cite{doring1} proposes an estimation of change-points by using $U$-statistics, while   \cite{doring2} studies the convergence of such estimators. Additional change-point studies based on $U$-statistics can be found in \cite{laj}, \cite{deh13}, \cite{deh17} and \cite{nga}.
Other papers on change-point analysis are, among others, \cite{amano},  \cite{enikeeva_MW}, \cite{fotopoulos_JT}, \cite{huh}, \cite{zhou}, \cite{yang}, \cite{wang},  \cite{lav}, \cite{haw} and \cite{men}. 
\cite{aue} and \cite{tru} give surveys of recent methods and techniques for change-point study in time series.
%\smallskip

However, except perhaps \cite{fotopoulos_JT}, \cite{khakhubia} and \cite{bat2} where weak changes are considered in the context of independent data, most of the existing studies generally focus on the detection of abrupt changes that may be seen with naked eyes. 
Moreover, for testing hypothesis approaches, the theoretical local power is rarely studied.  For instance, such a study seems to have been done recently in \cite{nga} and earlier in  \cite{deh17},  which studies 
 the local power of the CUSUM and Wilcoxon tests in the context of long range dependence shifted stationary time series. So, to the best of our knowledge, the study of weak changes in time series remains very little explored.

There are many techniques for multiple change-points detection in the literature. A popular one is the so-called binary segmentation (BS). The first step of this method is to search for a change-point in the entire data set. Once it is found the initial set of data is split into two subsets relatively to the change-point found. A similar search is done on each subset. The procedure continues until a given criterion is fulfilled. The wild binary segmentation (WBS) introduced in \cite{fryz} attempts to eliminate some of the weaknesses of BS by using at its first step, a random subset of the data at hand, rather than the entire data. 
%
%BS has some weaknesses that the wild binary segmentation (WBS) introduced in \cite{fryz} attempts to eliminate. WBS differs from BS in that at its first step not the entire data set is used, but a random subset. 
%
The Narrowest-Over-Treshold (NOT) change-point detection proposed by \cite{bara} is derived in the context of nonparametric function estimation. The number and locations of the possible features owned by the function are first estimated before the function itself is parametrically estimated between the pairs of  adjacent detected features. Another detection method is the Isolate-Detect (ID) derived in \cite{anas}. It is based on an isolation strategy, which prevents the consideration of intervals that contain more than one change-point. This isolation allows for detection in the presence of frequent changes of possibly small magnitudes.  Other multiple change-points detection materials are given in \cite{niu}.

The methods proposed here are for detecting weak changes in off-line (or retrospective) data, and for estimating their number and locations. They are in spirit, models choice approaches, based on the theoretical power of a likelihood-ratio test for discriminating between contiguous time-varying mean CHARN models built on the basis of a segmentation of the observations. 
The theoretical power of the test, expressed as a function of the candidate break-points and their magnitudes, plays an important role in the detection of the changes and the estimation of their number and locations. 
%In the spirit of \cite{and}
The starting point of our methods is to find possible a priori information on the data studied. These can be obtained from their time series plot (chronogram), which may exhibit features as potential stationary areas and/or potential change-points locations. 
Assume from the chronogram that the first $m$ data are stationary. Our technique for change detection consists in segmenting the data into two subsets, one of which containing data with indices $1, \ldots, t$ for $t=m+1, m+2 \ldots, n$, where $n$ is the sample size. For each $t$, if the power of the test is close to its nominal level, then no change-point is found in the original data. Otherwise, there is at least one change-point to be found in the vicinity of the potential change-points suspected from the time series plot. The change locations are then estimated by the vector of indices maximizing the estimator of the local power obtained by substituting in its expression, estimators of the magnitudes of the breaks for the unknown magnitudes. The dimension of this vector is an estimator of the number of changes. 

From the chronogram 
%is neither intuitive, nor restrictive. 
one can get features of the data that can help searching for their change-points where they are more likely to be. 
This way, our methods attempt to seek changes only in some restricted areas of the data, while most of the usual methods would make an exhaustive search, which is well known to be time consuming and which may fail in finding weak changes. 
%However, the area of some changes with very small magnitude may not be 
The use of the chronogram may be not necessary when some a priori information are available. An example is that of a piece-wise stationary data produced by a phenomenon known to change gradually around a given date $t_0$. If one wishes to detect the date of the beginning of the change, the probability of finding it in the vicinity of $t_0$ is larger than that of finding it elsewhere. Many practical such examples can be found among data associated with important events that happened at known dates, and for which one wishes to know when the changes induced by  these events started.

In our methods, instead of maximizing CUSUM-type statistics for estimating the number of changes and their locations, we rather maximize the local power of our test. 
The techniques for computing this important tool are more close to those of \cite{huang1} and \cite{huang2} or \cite{benghabrit1} and \cite{benghabrit2}, than to those in \cite{bas} and \cite{des}. 
We use them in Section 3, where we present our test as well as the associated theoretical results based on the models, the notation and assumptions listed in Section 2. 
In Section 4, a simulation experiment conducted shows a good performance of our methods compared to WBS, NOT, ID and 
the CUSUM and {\it weighting} CUSUM tests studied in \cite{laj3}. All these methods, as well as ours, are also applied to real data from the floods of the Upper Hanjiang River in China. Section 5 contains the proofs of the theoretical results stated in Section 3.
%
%\begin{figure}
%   \begin{center}
%      \includegraphics[width = 120mm,height=120mm]{chrono.pdf}
%  \end{center}
%\caption{Chronograms (a)-(d) represent series from (\ref{sim}) obtained for standard Gaussian noises.}
%\label{f1}
%\end{figure}
%
%
%
\section{Testing some contiguous CHARN models}
Our methods rest on the local power of a likelihood-ratio test for contiguous time-dependent coefficients CHARN models. Before studying the test, we precise the notation, we present the models and list the main assumptions.
\subsection{The notation}
Let $d, d_1,d_2$ be positive integers with $d=d_1+d_2$ and $ \mathcal{G} $ be a real-valued function defined and differentiable on $ \mathbb R^d $. We denote for $z=(z_1, \ldots, z_d) =(z^{(1)}, z^{(2)})\in \mathbb R^d $ and for any $k=1,2$
$$ \partial_z {\mathcal{G}(z)}
%=\dfrac {\partial {\mathcal{G}(x)}} {\partial x}
: = \left (\dfrac {\partial {\mathcal{G} (z)}} {\partial z_1}, \ldots, \dfrac {\partial {\mathcal{G}(z)}} {\partial z_d} \right), \ \ 
\partial_{z^{(k)}}{\mathcal{G}(z)}
%=\dfrac {\partial {\mathcal{G}(x,y)}} {\partial x}
: = \left (\dfrac {\partial {\mathcal{G} (z)}} {\partial z_1^{(k)}}, \ldots, \dfrac {\partial {\mathcal{G}(z)}} {\partial z_{d_k}^{(k)}} \right)
$$
$$ \partial_z^2 {\mathcal{G} (z)}
%=\dfrac {\partial^2 {\mathcal{G} (x)}} {\partial {x^2}}
: = \left (\dfrac {\partial^2 {\mathcal{G} (z)}} {\partial {z_i } \partial {z_j}}: 1 \leq i, j \leq d \right), \partial_{z^{(k)}}^2 {\mathcal{G} (z)}
%=\dfrac {\partial^2 {\mathcal{G} (x)}} {\partial {x^2}}
: = \left (\dfrac {\partial^2 {\mathcal{G} (z)}} {\partial z_i^{(k)} \partial z_j^{(k)}}: 1 \leq i, j \leq d_k \right)$$
$$\partial_{z^{(1)} z^{(2)}}^2 {\mathcal{G} (z)}
%=\dfrac {\partial^2 {\mathcal{G} (x)}} {\partial {x^2}}
: = \left (\dfrac {\partial ^2 {\mathcal{G} (z)}} {\partial {z_i^{(1)}} \partial {z_j^{(2)}}}: \ 1 \leq i \leq d_1, \ \ 1 \leq j \leq d_2 \right).$$
Let $ U = (U_1, \ldots, U_d)^{\top}$ and $D = (D_1^{\top}, \ldots, D_d^{\top})^{\top},$ where for all $ i = 1, \ldots, d, $ $ D_i $ is a matrix $ d \times 1 $ and $ H^{\top} $ stands for the transpose of a vector or a matrix $ H $. 
For any number $c$, we recall that $c U=U c=(c U_1, \ldots, c U_d)^{\top}$ and we define $ D U^{\top} = U^{\top} D = (D_1^{\top}U_1, \ldots D_d^{\top}U_d)^{\top} $. We denote by $ || \cdot ||$ the Euclidean norm on $\mathbb R^d$ and by $||A||_M=\max_{1\leq i \leq n}{\sum_{j=1}^n{|a_{ij}|}},$ the norm of any square matrix $A=(a_{ij})_{1\leq i,j\leq n}.$
%
%\noindent
For a differentiable function $h$ with derivative $h'$, we denote 
$$ \phi_h ={h' \over h} \ \ \mbox{  and  } I(h)=\int_{-\infty}^\infty\phi_h^2(x)h(x)dx.$$
%\smallskip
%
\subsection{The models and the assumptions}

Let $k,n \in \mathbb N$ and $k<< n$. For the full description of our methodologies, we assume the observations $X_1,\ldots,X_n$ are issued from 
the following time-varying coefficients $p$th-order CHARN($p$) model (see e.g., \cite{hardle_ty}). 
%The CHARN($p$) model considered here has the form:
\begin{equation} \label{mod}
 X_t=T(Z_{t-1})+\gamma^{\top}\omega(t)+V(Z_{t-1})\varepsilon_t,\quad t\in\mathbb Z,
 \end{equation}
where 
$T$ and $V$ are real-valued functions with $\inf_{x\in\mathbb R^p} V(x)>0$, 
$\gamma$$=$$(\gamma_1,\ldots,\gamma_k,\gamma_{k+1})^{\top}$$\in\mathbb{R}^{k+1}$, 
$\omega(t)=(\nbI_{[t_0,t_1)}(t),\nbI_{t[t_1,t_2)}(t),\ldots,\nbI_{[t_{k-1},t_k)}(t),\nbI_{[t_k,t_{k+1})}(t))^{\top}$ with 
%\in\lbrace 0,1\rbrace^{k+1}$
%
%$t_1,\ldots, t_k$, 
$1=t_0<t_1<\ldots<t_k< t_{k+1}=n$ standing for 
the candidates locations of the changes, 
 $(X_t)_{t\in\mathbb Z}$ is piece-wise stationary and ergodic on the $[t_j,t_{j+1})$'s, $(\varepsilon_t)_{t\in\mathbb Z}$ is a white noise with density function $f$, and  for all $t\in\mathbb Z,$  $Z_t=(X_t,\ldots,X_{t-p+1})^{\top}$ and 
$\varepsilon_t$ is independent of the $ \sigma $-algebra $G_{t-1}=\sigma(Z_1,\ldots,Z_{t-1})$. 

The class of models (\ref{mod}) contains common models such as AR($p$), ARCH($p$), TARCH($p$), EXPAR($p$) whose statistical and probability properties are widely studied in the literature (see, eg, \cite{tong}). Such a model, of infinite order, is considered in \cite{bardet_k}. 
%A more general parametric form of (\ref{mod}) is considered in \cite{bardet_k}. 
%

In the next section, we construct a likelihood-ratio test for testing
 $$H_0:\gamma=\gamma_0\ \ \ \text{ against }\ \ \ H^{(n)}_{\beta}:\gamma=\gamma_0+\dfrac{\beta}{\sqrt{n}}=\gamma_{n},\quad n>1,$$
for some $\gamma_0 \in \mathbb R^{k+1}$ and $\beta \in \mathbb R^{k+1}$ depending on the $t_j$'s. 
We study its properties under $H_0$ and $H^{(n)}_{\beta}$. 

For any $k\ge 0$, denote by $\mathcal P_{k,t^k}$ the theoretical power of this test at $t^k=(t_1,\ldots,t_k)$, with the convention that $\mathcal P_{0,t^0}=\alpha$, $\alpha \in (0,1)$, the level of the test. An expression of $\mathcal P_{k,t^k}$ is given in the next section as a function of the parameters. This is computed under the following general assumptions: 
\begin{itemize}
\item[$(A_1):$] $f$ is differentiable and $\displaystyle\int_{-\infty}^\infty xf(x)dx=0$ and $\displaystyle\int_{-\infty}^\infty{x^2 f(x)} dx=1.$
\item[$(A_2):$] $\phi_f$ is differentiable with a $c_\phi$-Lipschitzian derivative $\phi'_f$, where $0<c_\phi<\infty.$
\item[$(A_3):$] $\displaystyle \lim_{x\rightarrow {+\infty}}{f(x)}=\displaystyle \lim_{x\rightarrow {-\infty}}{f(x)}=\displaystyle\lim_{x\rightarrow {+\infty}}{f'(x)}= \displaystyle\lim_{x\rightarrow {-\infty}}{f'(x)}=0.$
\item[$(A_4): $] $\displaystyle\int_{-\infty}^\infty|\phi_f(x)|^3 f(x) dx <\infty.$
%$(A_6): \lim_{x\rightarrow {+\infty}}{xf(x)}= \lim_{x\rightarrow {-\infty}}{xf(x)}=0$\\
\item[$(A_5) : $] For all $j=1,\ldots,k+1,$ $n_j(n)=t_j-t_{j-1}\longrightarrow\infty$, $\displaystyle {n_j(n) \over n}\longrightarrow\alpha_j$  as $n\rightarrow\infty$.
\item[$(A_6) : $]  The sequence of the $Z_t$'s whose components are associated with indices within $[t_{j-1}, t_j)$, $j=1,\ldots,k+1,$ is
 stationary and ergodic with stationary cumulative distribution function $F_j$.
%
%; the stationary law of $(X_t)_{t\in\mathbb Z}$ on the $j$th interval has cumulative distribution function $F_j$.
\item[$(A_7):$] $\mu_{j\ell}(\gamma_0)=I(f){\displaystyle\int_{{\mathbb R}^p}{\dfrac{1}{V^\ell (x)}}dF_j(x)}<\infty$, \  
%\ and \ $\mu_{j,\ell}(\psi_0,\gamma_0)=I(f){\displaystyle\int{\dfrac{1}{V^\ell(x)}}dF_j(x)}<\infty,$
$j=1,\ldots,k+1,$ $\ell \leq3.$
\end{itemize}
\begin{remark} \label{rm1} 
\begin{itemize}
\item[(i)] In the case $\gamma_0 \ne 0$, assumption $(A_6)$ may not hold. But one can check that it does at least for the subclass of (\ref{mod}) with constant functions $T$. These models have the form of those studied in \cite{deh13} and \cite{deh17}.
\item[(ii)] On each $[t_{j-1}, t_j)$, $j=1,\ldots,k+1,$ there are at most $p$ random vectors 
$Z_t$ whose components are associated with indices within both $[t_{j-1}, t_j)$ and $[t_{j-2}, t_{j-1})$. They may not have the same stationary distribution. But since their number is negligible behind $n_j(n)$, their distributions do not affect the asymptotic results.
\end{itemize}
\end{remark}
\begin{remark} \label{rm2}
\begin{itemize}
\item[(i)] The $\mu_{j\ell}(\gamma_0)$'s 
%and $\mu_{j,\ell}(\psi_0,\gamma_0)$'s 
are functions of $\gamma_0$ through the $F_j$'s.
\item[(ii)]
$(A_3)$ implies $\displaystyle\int_{-\infty}^\infty{\phi_f(x)f(x)}dx=0$ and $\displaystyle\int_{-\infty}^\infty{\phi^2_f(x)f(x)}dx=\displaystyle\int_{-\infty}^\infty{\phi'_f(x)f(x)}dx. $
\item[(iii)] 
We assume that as $n$ tends to infinity, the density function of $(\varepsilon_0, Z_0)^{\top}$ under $H_{\beta}^{(n)}$ tends to its density under $H_0$. With this, asymptotically, $(\varepsilon_0, Z_0)^{\top}$ has no influence on the test statistic derived here. As it allows for a simplification of the form of the likelihoods, we consider in this paper the likelihoods and log-likelihoods  associated with $(\varepsilon_0, Z_0^{\top}, X_1,X_2, \ldots, X_n)^{\top}$.
% We assume that as $n$ tends to infinity, its density function under $H_{\beta}^{(n)}$ tends to its density under $H_0$.
%
\end{itemize}
\end{remark}
\begin{remark}
A decision about $H_0$ and  $H^{(n)}_{\beta}$ can be taken on the basis of the power $\mathcal P_{k,t^k}$. Since this quantity is unknown, an estimator $\widehat{\mathcal P}_{k,t^k}$ can be obtained by substituting the estimators of the parameters for the parameters in its expression. In particular, 
for $j=2, \ldots k+1$, 
the $j$th  component of $\beta$ can be estimated by $\sqrt{n}(\overline X_{j,n}-\widehat{\widetilde{\gamma}}_{0j,n})$, where 
$\widehat{\widetilde{\gamma}}_{0j,n}$ is an estimator of the $j$th component of $\gamma_0$, and $\overline X_{j,n}$ is the sample mean of the observations with time indices within $[t_j,t_{j+1})$. A possible $\widehat{\widetilde{\gamma}}_{0j,n}$ is the sample mean of the observations with indices in $[t_{j-1}, t_j)$. 
\end{remark}
\subsection{Changes detection and their locations estimation}
Many strategies for changes detection and for estimating their locations in off-line data have been proposed in the literature. Some are briefly reviewed in the introduction. Some others a reviewed in \cite{niu1} and in \cite{niu}. The strategies that we propose here rest on the power of the test studied in this paper. Before presenting them, 
we first discuss the forms of $\gamma_0$ and $\beta$ in (\ref{mod}). 

One would invoke a change in the mean of the data when at a certain time indice  the mean changes. Thus, the reference mean would be that of the earliest observations assumed to be stationary up to the time index $t_1$ where the first change occurs or is assumed to occur. 
Hence, the first component of $\gamma_0$ will always be taken to be the theoretical mean $\mu_1$ of the data on $[t_0, t_1)$, while the first component of $\beta$ would be nil. In 
\cite{bardet_k} it is explained how to check the stationarity of 
these earliest observations called there {\it historical data}. But the method may not work for such historical data containing weak changes.

\cite{and} 
 proposes change-point tests assuming the location of the potential change is known {\it a priori} and the number of observations before or after the change-point presumed fixed. 
\cite{bus} gives a survey of change detection methods with known or unknown potential change-points. All these suggest that in change-point study  one may need some {\it a priori} information about the data that may help avoiding an exhaustive search of the change-point locations.

It is well known (see, eg, \cite{bro}, page 14) that one of the starting points of the modelization of a time series $X_1, X_2, \ldots, X_n$ is the analysis of its chronogram. This can provide  some information as for example, a guessed  
historical data $X_1, X_2, \ldots, X_m$, $m<<n$, a maximum number $K$ of breaks and areas where they may be located, hence, a minimum distance $h<<n$ between adjacent breaks. 

Recall $ \mathcal P_{0,t^0}=\alpha$. Let $\zeta \in (0,.1)$ be a given positive small number. Our procedures for detecting changes and estimating their locations in a time series $X_1, X_2, \ldots, X_n$ starts with finding the above a priori information and continues as follows.
\begin{itemize}
\item[S1 -] {\bf Change detection:} Take $k=1$ and apply the likelihood-ratio test derived here to model (\ref{mod}) for the $t_1$'s satisfying  $m\le t_1 \le n$. 
\begin{itemize}
\item[i-] If $|\widehat{\mathcal P}_{1,t^1}- \mathcal P_{0,t^0}| \le \zeta$ for all such $t_1$,  then, no change is detected in the series.
\item[ii-] If $|\widehat{\mathcal P}_{1,t^1}- \mathcal P_{0,t^0}|>\zeta$ for a $t_1$, then, there is at least one change in the series. 
\end{itemize}
\item[S2 -]  {\bf Estimating change-point locations:} Let $m<\tau_1^0<\ldots< \tau_K^0\le n-h$, $\tau_j^0-\tau_{j-1}^0 \ge h, \ j=2,\ldots,K$,  be the  potential locations of the changes obtained from the chronogram. Let $\mathcal C_j$ be an arbitrary set of time indices containing $\tau_j^0$, and $\mathcal C_j \cap \mathcal C_\ell= \emptyset$, $j\ne\ell=1, \ldots,K$. 
For any $\ell=1, \ldots,K$, consider 
 $\mathcal S_\ell=\bigcup_{1 \le j_1< j_2< \ldots< j_\ell \le K}\mathcal C_{j_1} \times \mathcal C_{j_2} \times \ldots \times \mathcal C_{j_\ell}$.
%, $1 \le j_1, j_2, \ldots, j_k \le K$. 
For any $\ell$-tuple $\tau^\ell=(\tau_1,\ldots, \tau_\ell) \in \mathcal S_\ell$, apply the testing problem in Subsection 2.2 with  $t_j=\tau_j$, $j=1, \ldots,\ell$ and compute $\widehat{\mathcal P}_{\ell,t^\ell}$. Assume that 
$\mathcal C_1$ contains a change-point location.
%
%\{(\tau_1,\ldots, \tau_k), 1=\tau_0<\tau_1<\ldots<\tau_k< \tau_{k+1}=n, \tau_j-\tau_{j-1} \ge h, \ j=0,\ldots,k+1\}.$ 
%
%\noindent
%$\bullet$ At step $k+1$ : 
%
%\begin{itemize}
%\item[i-] If $|\widehat{\mathcal P}_{k+1,t^{k+1}}-\widehat{\mathcal P}_{k,t^k}|\le \zeta$ and $|\widehat{\mathcal P}_{k,t^k}-\widehat{\mathcal P}_{k-1,t^{k-1}}| > \zeta$, 
\medskip

\noindent
$\bullet$ For $\ell=2, \ldots, K$, 
\begin{itemize}
\item[i-] If $|\max_{t^\ell \in \mathcal S_\ell} \widehat{\mathcal P}_{\ell,t^\ell}-\max_{t^{\ell-1} \in \mathcal S_{\ell-1}} \widehat{\mathcal P}_{\ell-1,t^{\ell-1}}|\le \zeta$, there are probably $\ell-1$ change-point locations in the areas studied. Then, put $\widetilde{\mathcal S}_\ell=\mathcal S_{\ell-1}$.

If $|\max_{t^\ell \in \mathcal S_\ell} \widehat{\mathcal P}_{\ell,t^\ell}-\max_{t^{\ell-1} \in \mathcal S_{\ell-1}} \widehat{\mathcal P}_{\ell-1,t^{\ell-1}}|> \zeta$, there are probably $\ell$ change-point locations in the areas studied. Then, put $\widetilde{\mathcal S}_\ell=\mathcal S_\ell$.
\smallskip

\noindent
Denote by $\iota$ the length of the vectors in $\widetilde{\mathcal S}_K$. Then an estimate $(\widehat k, \widehat t^{\widehat k}) $ of the couple of the number of changes and the vector of the locations can be obtained as 
$$(\widehat k, \widehat t^{\widehat k}) =\arg \max_{t^\iota \in \widetilde{\mathcal S}_K} \widehat{\mathcal P}_{\iota,t^\iota}.$$
\end{itemize}
%
%
%Then an estimate $(\widehat k, \widehat t^{\widehat k}) $ of the couple of the number of changes and the vector of the locations can be obtained as 
%
%$$(\widehat k, \widehat t^{\widehat k}) =\arg \max_{t^\ell \in \mathcal S_\ell, \ell=1,\ldots,K} \widehat{\mathcal P}_{\ell,t^\ell}.
%$$
%
%STOP.
%
%\item[ii-] If $|\widehat{\mathcal P}_{k+1,t^{k+1}}-\widehat{\mathcal P}_{k,t^k}|> \zeta$, repeat item i with $k$ substituted for $k+1$. 
%\end{itemize}
%

$\bullet$ Clearly, if $k$, the number of changes is known, $t^k$ can be estimated by 
\begin{equation} \label{sin}
\widehat t^k= \arg \max_{t^k \in \mathcal S_k} \widehat{\mathcal P}_{k,t^k}.
\end{equation}
\item[S3 -] {\bf Alternative to} S2 : 
As a substitute to S2, we can use the following sequential strategy. 
Consider the segments 
$$\mho_1 =\{1,\ldots, m, \ldots, \tau_1^0+h \}, \ \  
\mho_\ell =\{\tau_{\ell-1}^0+h, \ldots, \tau_\ell^0+h\}, \ \ \ell=2,\ldots,K.$$
\begin{itemize}
\item[i- ] Step 1 : 
Apply the testing problem in Subsection 2.2 for $k=1$ and for each $t_1 \in \mho_1$, $t_1 >m$, with $\tau_1^0+h$ substituted for $n$.
%\begin{itemize}
%\item[i-] 

If $|\widehat{\mathcal P}_{1,t^1}- \mathcal P_{0,t^0}|\le \zeta$ for all these $t_1$'s, there is no change location on $\mho_1$. Then update $\mho_2$ by merging the current $\mho_2$ with $\mho_1$.
\smallskip

\noindent
If $|\widehat{\mathcal P}_{1,t^1}- \mathcal P_{0,t^0}|> \zeta$ for a $t_1$, there is one change location on $\mho_1$ estimated by 
$$\widehat t_1^1= \arg \max_{t^1 \in \mho_1} \widehat{\mathcal P}_{1,t^1}.$$
Then update $\mho_2$ by substituting $\widehat t_1^1$ for $\tau_1^0$ in the current $\mho_2$.
\item[ii- ] Step $\ell$ : Apply the testing problem in Subsection 2.2 for $k=1$ and for each $t_1 \in \mho_\ell$, $t_1 >\tau_\ell^0+h$ or $t_1 >\widehat t_\ell^1+h$ , with the length of $\mho_\ell$ substituted for $n$.

If $|\widehat{\mathcal P}_{\ell,t^\ell}- \mathcal P_{0,t^0}|\le \zeta$ for all these $t_1$'s, there is no change location on $\mho_\ell$. Then update $\mho_{\ell+1}$ by merging the current $\mho_{\ell+1}$ with $\mho_\ell$.
\smallskip

\noindent
If $|\widehat{\mathcal P}_{\ell,t^\ell}- \mathcal P_{0,t^0}|> \zeta$ for some $t_1$, there is one change location on $\mho_\ell$ estimated by 
$$\widehat t_\ell^1= \arg \max_{t^1 \in \mho_\ell} \widehat{\mathcal P}_{\ell,t^1}.$$
Then update $\mho_{\ell+1}$ by substituting $\widehat t_\ell^1$ for $\tau_\ell^0$ in the current $\mho_{\ell+1}$.

\end{itemize}
It is clear that the total number of changes in the series is the number of the $\mho_\ell$'s where a change has been located, and the locations are estimated by the $ \widehat t_\ell^1$'s.
\end{itemize}
\medskip

\noindent
%{\bf Note} - 
The part i of S1 comes from the idea that in the case of no change, all the estimates of the possible $\beta$ would be close to 0, and the estimated power would be close to the level of the test. Part ii is motivated by the fact that if there is at least one change, then one can find one $t_1$ for which the mean of the observations before and after that time index are different. The test would then reject the null hypothesis of no break. S2 and S3 are built from the same ideas.
% as S1. S2 is built on the fact that the power of the test introduced in Subsection 2.2 is not sensitive to locations which are not those of change-points (their magnitudes are nil). 
%
%
\begin{remark}
\begin{itemize}
\item[i- ] In practice, the sets $\mathcal C_j$'s in the strategy S2 must not be too large for larger $K$.
\item[ii- ] In the strategy S1, for the $t_1$'s approaching $n$, the data can be spliced as in the so-called circular binary segmentation method described in \cite{ols}. Then, in the case of one single change, for estimating its location, one can directly apply (\ref{sin}) with $\mathcal S_1=\{m+1, \ldots, n\}$
 or S3 with ${\mho}_1=\{m+1, \ldots, n\}$, splicing the data when the current time index approaches $n$.
\end{itemize}
\end{remark}

\section{The theoretical results}
\subsection{A LAN result}
\noindent 
We first establish the contiguity of the sequences $\{H_0^{(n)}= H_0 \}$ and 
$\{H_0^{(n)}= H^{(n)}_{\beta} \}$, $\beta \in \mathbb R^{k+1}$. We denote by $\Lambda_n(\gamma_0,\beta)$ the log-likelihood ratio of $ H_0 $ against $ H^{(n)}_{\beta}$ 
and 
we define the {\it central statistic} 
\begin{equation} \label{cent}
\Delta_n(\gamma_0,\beta)=\dfrac{1}{\sqrt{n}}\sum_{t=1}^n{\dfrac{\beta^{\top}\omega(t)}{V(Z_{t-1})}\phi_f[\varepsilon_t(\gamma_0)]},
\end{equation}
where for all $\gamma=(\gamma_1,\ldots,\gamma_{k+1})\in\mathbb R^{k+1}$ and all $t\in\mathbb Z$, 
\begin{equation} \label{epsi}
\varepsilon_t(\gamma)=\dfrac{X_t-T(Z_{t-1})-\gamma^{\top}\omega(t)}{V(Z_{t-1})}.
\end{equation}
%
% We have the following theorem.
%
\begin{theorem} \label{th1}
Assume that $ (A_1) $-$ (A_7) $ hold. Then, for any $\beta \in \mathbb R^{k+1}$, under $ H_0, $ as $n\rightarrow\infty,$ 
$$\Lambda_n(\gamma_0,\beta)=\Delta_n(\gamma_0,\beta)-\dfrac{\mu(\gamma_0,\beta)}{2}+o_P(1),$$
%and 
$$\Delta_n(\gamma_0,\beta)\overset{D}{\longrightarrow} \mathcal{N}\left(0,\mu(\gamma_0,\beta)\right),$$ 
with $$\mu(\gamma_0,\beta)=\sum_{j=1}^{k+1}{\alpha_j\beta^2_j\mu_{j2}}(\gamma_0) = \varpi^2(\gamma_0,\beta).$$ 
\end{theorem}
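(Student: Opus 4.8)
The statement is a standard Local Asymptotic Normality (LAN) expansion, so the plan is to verify the conditions of a LAN theorem (in the spirit of Le Cam, or the martingale-based approach of Swensen/Hall–Heyde). The log-likelihood ratio is the sum, over $t=1,\ldots,n$, of the increments $\log\bigl(f[\varepsilon_t(\gamma_n)]/f[\varepsilon_t(\gamma_0)]\bigr) + \log\bigl(V(Z_{t-1})/V(Z_{t-1})\bigr)$; since the scale function $V$ does not depend on $\gamma$, the Jacobian terms cancel and we are left with
$$\Lambda_n(\gamma_0,\beta)=\sum_{t=1}^n \log\frac{f[\varepsilon_t(\gamma_0)-\beta^{\top}\omega(t)/(\sqrt{n}\,V(Z_{t-1}))]}{f[\varepsilon_t(\gamma_0)]}.$$
First I would Taylor-expand each summand to second order in the small quantity $\beta^{\top}\omega(t)/(\sqrt{n}\,V(Z_{t-1}))$, using that under $H_0$ the residuals $\varepsilon_t(\gamma_0)$ are i.i.d.\ with density $f$ and independent of $G_{t-1}$. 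The first-order term is exactly $\Delta_n(\gamma_0,\beta)$ as defined in \eqref{cent}; the second-order term involves $\phi_f'[\varepsilon_t(\gamma_0)]\,(\beta^{\top}\omega(t))^2/(n V^2(Z_{t-1}))$, and the remainder is controlled by assumption $(A_2)$ (Lipschitz $\phi_f'$) together with $(A_4)$ (third absolute moment of $\phi_f$).

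The core of the argument is then a martingale central limit theorem applied to $\Delta_n(\gamma_0,\beta)$. Writing $\xi_{n,t}=n^{-1/2}\beta^{\top}\omega(t)\phi_f[\varepsilon_t(\gamma_0)]/V(Z_{t-1})$, each $\xi_{n,t}$ is a martingale difference with respect to $G_{t}$: indeed $E\{\phi_f[\varepsilon_t(\gamma_0)]\mid G_{t-1}\}=\int\phi_f(x)f(x)\,dx=0$ by Remark~\ref{rm2}(ii), and $\omega(t),V(Z_{t-1})$ are $G_{t-1}$-measurable. For the conditional variance, $E\{\xi_{n,t}^2\mid G_{t-1}\}=n^{-1}(\beta^{\top}\omega(t))^2 V^{-2}(Z_{t-1})\int\phi_f^2(x)f(x)\,dx = n^{-1}(\beta^{\top}\omega(t))^2 V^{-2}(Z_{t-1})\,I(f)$. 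Since $\omega(t)$ is an indicator vector selecting the block $[t_{j-1},t_j)$, summing over $t$ splits the sum into $k+1$ blocks; on block $j$ we get $\beta_j^2$ times the Cesàro average of $I(f)/V^2(Z_{t-1})$ over indices in that block, which by the block stationarity and ergodicity in $(A_6)$ converges to $\beta_j^2\,\mu_{j2}(\gamma_0)=\beta_j^2\,I(f)\int V^{-2}\,dF_j$, and the block length ratio $n_j(n)/n\to\alpha_j$ by $(A_5)$. Hence $\sum_t E\{\xi_{n,t}^2\mid G_{t-1}\}\overset{P}{\to}\sum_{j=1}^{k+1}\alpha_j\beta_j^2\mu_{j2}(\gamma_0)=\mu(\gamma_0,\beta)$. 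The conditional Lindeberg condition follows from $(A_4)$: $\sum_t E\{|\xi_{n,t}|^3\mid G_{t-1}\}$ is $O(n^{-3/2}\cdot n)=O(n^{-1/2})\to 0$ after using $\inf V>0$ and the $(A_7)$ moment bounds, which dominates the Lindeberg sum. This yields $\Delta_n(\gamma_0,\beta)\overset{D}{\to}\mathcal N(0,\mu(\gamma_0,\beta))$.

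It remains to identify the quadratic compensator. The second-order term from the Taylor expansion, $-\tfrac12 n^{-1}\sum_t (\beta^{\top}\omega(t))^2 V^{-2}(Z_{t-1})\phi_f'[\varepsilon_t(\gamma_0)]$, converges in probability — again by the block ergodic averaging of $(A_6)$, the moment control of $(A_7)$, and the identity $\int\phi_f'(x)f(x)\,dx=\int\phi_f^2(x)f(x)\,dx=I(f)$ from Remark~\ref{rm2}(ii) — to $-\tfrac12\sum_{j=1}^{k+1}\alpha_j\beta_j^2\mu_{j2}(\gamma_0)=-\mu(\gamma_0,\beta)/2$. Collecting the first-order term, the second-order term, and the $o_P(1)$ remainder gives the claimed expansion $\Lambda_n=\Delta_n-\mu(\gamma_0,\beta)/2+o_P(1)$, and the equality $\mu(\gamma_0,\beta)=\varpi^2(\gamma_0,\beta)$ is just the definition of $\varpi^2$.

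The main obstacle I anticipate is the careful handling of the boundary overlap vectors flagged in Remark~\ref{rm1}(ii): at each change location there are up to $p$ vectors $Z_t$ whose coordinates straddle two adjacent blocks, so they need not follow either stationary law $F_j$ or $F_{j+1}$. The resolution is precisely the remark's observation — their number, at most $p(k+1)$, is fixed and negligible relative to each $n_j(n)\to\infty$, so after bounding the corresponding summands uniformly using $\inf V>0$ and the finite moments in $(A_4)$ and $(A_7)$, their total contribution to both the linear and quadratic parts is $o_P(1)$; the ergodic averages may therefore be taken over the genuinely stationary indices in each block. A secondary technical point is justifying that the pointwise Taylor remainder, summed over $t$, is $o_P(1)$ uniformly — this is where the $c_\phi$-Lipschitz hypothesis $(A_2)$ on $\phi_f'$ is essential, since it bounds the remainder by $c_\phi\, n^{-3/2}\sum_t |\beta^{\top}\omega(t)|^3 V^{-3}(Z_{t-1})$, which is $O_P(n^{-1/2})$ by $(A_7)$ with $\ell=3$.
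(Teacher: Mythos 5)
Your proposal is correct and follows essentially the same route as the paper: a second-order Taylor expansion of the log-likelihood ratio (you expand in the scalar shift of the residual, the paper expands in $\gamma$ with an intermediate point $\widetilde{\gamma}$, which is the same mechanism), Lipschitz control of the remainder via $(A_2)$ together with the ergodic third-moment bound from $(A_7)$ with $\ell=3$, the Hall--Heyde martingale CLT for $\Delta_n(\gamma_0,\beta)$ with block-wise ergodic convergence of the conditional variance and a Lindeberg check from third moments, and the ergodic theorem plus Remark~\ref{rm2}(ii) to identify the quadratic term as $-\mu(\gamma_0,\beta)/2$. Your explicit treatment of the at most $p(k+1)$ boundary vectors straddling adjacent blocks is a point the paper only handles via Remark~\ref{rm1}(ii), but it does not change the argument.
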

\begin{proof}
See Appendix.
\end{proof}
%\medskip

%\noindent We have the following corollary.
\begin{corollary} \label{cr1}
Assume that $(A_1)$-$(A_7)$ hold. Then, for any $\beta \in \mathbb R^{k+1}$ the sequences
 $\lbrace H^{(n)}_{\beta}:n\geq 1\rbrace $ and  $\lbrace H^{(n)}_0=H_0:n\geq 1\rbrace $ are contiguous. Moreover, under $H^{(n)}_{\beta}$, as $n\rightarrow\infty,$
$$\Delta_n(\gamma_0,\beta)\overset{D}{\longrightarrow} \mathcal{N}(\mu(\gamma_0,\beta),\mu(\gamma_0,\beta)).$$
\end{corollary}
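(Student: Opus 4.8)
The plan is to deduce both assertions from Theorem~\ref{th1} by means of the two classical Le Cam lemmas, so that no probabilistic estimate beyond the LAN expansion is required. For the contiguity part, note first that Theorem~\ref{th1} gives, under $H_0$, $\Lambda_n(\gamma_0,\beta)=\Delta_n(\gamma_0,\beta)-\mu(\gamma_0,\beta)/2+o_P(1)$ together with $\Delta_n(\gamma_0,\beta)\overset{D}{\longrightarrow}\mathcal{N}(0,\mu(\gamma_0,\beta))$, so that Slutsky's lemma yields $\Lambda_n(\gamma_0,\beta)\overset{D}{\longrightarrow}\mathcal{N}(-\mu(\gamma_0,\beta)/2,\mu(\gamma_0,\beta))$ under $H_0$. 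Since a $\mathcal{N}(-\sigma^2/2,\sigma^2)$ random variable $\Lambda$ is almost surely finite and satisfies $\mathbb{E}(e^{\Lambda})=1$, Le Cam's first lemma applies to the likelihood ratios $dH^{(n)}_{\beta}/dH_0=\exp\{\Lambda_n(\gamma_0,\beta)\}$ and gives that $\{H^{(n)}_{\beta}\}$ is contiguous with respect to $\{H_0\}$; the normal form of the limit (mean equal to minus half the variance, support the whole line) also yields the reverse contiguity, hence the mutual contiguity claimed. When $\mu(\gamma_0,\beta)=0$ — in particular when $\beta=0$ — the two sequences are asymptotically indistinguishable and everything is immediate, so one may assume $\mu(\gamma_0,\beta)>0$.

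Next I would identify the limiting law of $\Delta_n(\gamma_0,\beta)$ under $H^{(n)}_{\beta}$ via Le Cam's third lemma. From the LAN expansion, $\Lambda_n(\gamma_0,\beta)-\Delta_n(\gamma_0,\beta)\overset{P}{\longrightarrow}-\mu(\gamma_0,\beta)/2$ under $H_0$, so under $H_0$ the couple $\big(\Delta_n(\gamma_0,\beta),\Lambda_n(\gamma_0,\beta)\big)$ converges jointly in distribution to $\big(\Delta,\Delta-\mu(\gamma_0,\beta)/2\big)$ with $\Delta\sim\mathcal{N}(0,\mu(\gamma_0,\beta))$; this is a (degenerate) bivariate Gaussian vector whose off-diagonal covariance is $\mathrm{Cov}\big(\Delta,\Delta-\mu(\gamma_0,\beta)/2\big)=\mu(\gamma_0,\beta)$. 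Le Cam's third lemma, applied with $\Lambda_n(\gamma_0,\beta)$ as the log-likelihood ratio, then shifts the mean of $\Delta_n(\gamma_0,\beta)$ by exactly this covariance while leaving the variance unchanged, giving $\Delta_n(\gamma_0,\beta)\overset{D}{\longrightarrow}\mathcal{N}\big(\mu(\gamma_0,\beta),\mu(\gamma_0,\beta)\big)$ under $H^{(n)}_{\beta}$, which is the assertion.

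Once Theorem~\ref{th1} is in hand the argument is essentially bookkeeping, and there is no genuine obstacle; the single point that deserves a word of care is the joint — rather than merely marginal — convergence of $\big(\Delta_n(\gamma_0,\beta),\Lambda_n(\gamma_0,\beta)\big)$ required by Le Cam's third lemma. But since the difference of these two sequences converges in probability to the constant $-\mu(\gamma_0,\beta)/2$, joint convergence follows at once from the marginal convergence of $\Delta_n(\gamma_0,\beta)$ provided by the theorem.
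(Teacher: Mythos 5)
Your proposal is correct and follows essentially the same route as the paper's own proof: both deduce from Theorem~\ref{th1} the limit $\mathcal{N}(-\mu(\gamma_0,\beta)/2,\mu(\gamma_0,\beta))$ of $\Lambda_n(\gamma_0,\beta)$ under $H_0$, then use the joint (degenerate) Gaussian convergence of $(\Delta_n(\gamma_0,\beta),\Lambda_n(\gamma_0,\beta))$ together with Le Cam's first and third lemmas to obtain contiguity and the shifted limit $\mathcal{N}(\mu(\gamma_0,\beta),\mu(\gamma_0,\beta))$ under $H^{(n)}_{\beta}$. You merely make explicit (the condition $\mathbb{E}(e^{\Lambda})=1$, the covariance identification, and the justification of joint convergence via the constant limit of the difference) what the paper compresses into a citation of Le Cam.
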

\begin{proof}
For any $\beta \in \mathbb R^{k+1}$, from Theorem \ref{th1}, under $H_0,$ as $n\rightarrow\infty,$
$$\Delta_n(\gamma_0,\beta)\overset{D}{\longrightarrow} \mathcal{N}(0,\mu(\gamma_0,\beta)).$$
Therefore, under $H_0$, as $n\rightarrow\infty,$
$$\Lambda_n(\gamma_0,\beta)\overset{D}{\longrightarrow} \mathcal{N}\left(-\dfrac{\mu(\gamma_0,\beta)}{2},\mu(\gamma_0,\beta)\right).$$
Then, it is easy to see that under $H_0,$ as $n\rightarrow\infty,$
$$\begin{pmatrix} 
\Delta_n(\gamma_0,\beta) \\ 
\Lambda_n(\gamma_0,\beta)
 \end{pmatrix}\overset{D}{\longrightarrow} \mathcal{N}\left(
 \begin{pmatrix} 
0 \\ 
-\dfrac{\mu(\gamma_0,\beta)}{2}
 \end{pmatrix} , 
 \begin{pmatrix} 
\mu(\gamma_0,\beta) &\mu(\gamma_0,\beta) \\ 
\mu(\gamma_0,\beta) & \mu(\gamma_0,\beta)
 \end{pmatrix} \right).$$
It results from \cite{lecam} or \cite{dreosebeke} that $\lbrace H^{(n)}_{\beta}:n\geq 1\rbrace $ and $\lbrace H^{(n)}_0=H_0:n\geq 1\rbrace $ are contiguous and under $H^{(n)}_{\beta},$ as $n\rightarrow\infty,$ 
$$\Delta_n(\gamma_0,\beta)\overset{D}{\longrightarrow}\mathcal{N}\left( \mu(\gamma_0,\beta),\mu(\gamma_0,\beta)  \right).$$
\end{proof}
%\medskip

\noindent 
For known $\gamma_0$ and for any $\beta \in \mathbb R^{k+1}$, for testing $H_0$ against $H^{(n)}_{\beta},$ we base our test on the statistic 
$$ \mathcal{T}_n(\gamma_0,\beta)= \dfrac{\Delta_n(\gamma_0,\beta)}{\widehat{\varpi}_n(\gamma_0,\beta)},$$
where $\widehat{\varpi}_n(\gamma_0,\beta)$ is any consistent estimator of $\varpi(\gamma_0,\beta)= \mu^{\frac{1}{2}}(\gamma_0,\beta)$. In the sequel, $\widehat{\varpi}_n(\gamma_0,\beta)$ will be taken to be the natural estimator  
$\widehat{\mu}_n^{\frac{1}{2}}(\gamma_0,\beta)$ with $\widehat{\mu}_n(\gamma_0,\beta)=\sum_{j=1}^{k+1}{\widehat{\alpha}_j\beta^2_j\widehat{\mu}_{j2}}(\gamma_0),$ and for $ j=1,\ldots,k+1$, 
 $\widehat{\alpha}_j$ is an estimator of  $\alpha_j=\displaystyle\lim_{n\rightarrow\infty}{n_j(n)/n}$ and  
$$\widehat{\mu}_{j2}(\gamma_0)=I(f) \dfrac{1}{n_j(n)}\sum_{t={t_{j-1}}}^{t_j}{\dfrac{1}{V^2(Z_{t-1})}}.$$  
%In practice, one my take $\widehat{\alpha}_j={n_j(n)}/{n}.$\\
%Here, $\widehat{\mu}_{j,2}$ is an estimator of ${\mu}_{j,2}$.  % $\bullet$ $\mathcal{T}_n=\frac{1}{\sqrt{n}}\displaystyle\sum_{i=1}^n{\frac{\beta^{\top}\omega(i)}{V(Z_{i-1},\theta)}\phi_f(\varepsilon_i(\gamma_0))}$.\\
%
%\noindent We have the following result:
\begin{theorem} \label{th2}
 \noindent Assume that $(A_1)$-$(A_7)$ hold. Then, for any $\beta \in \mathbb R^{k+1}$,
\begin{itemize}
\item[(i)]  Under $H_0$, $\mathcal{T}_n(\gamma_0,\beta)\overset{D}{\longrightarrow}\mathcal{N}( 0,1),$ as $n\rightarrow\infty.$
\item[(ii)] Under $H^{(n)}_{\beta}$, at level of significance $\alpha\in(0,1)$, the asymptotic power of the test based on $\mathcal{T}_n(\gamma_0,\beta)$ is $ \mathcal P_{k,t^k}= 1-\Phi\left(z_\alpha -\varpi(\gamma_0,\beta)\right)$, where $z_\alpha$ is the $(1-\alpha)$-quantile of a standard Gaussian distribution with cumulative distribution function $\Phi$.
\item[(iii)] The test based on $\mathcal{T}_n(\gamma_0,\beta)$ is locally asymptotically optimal.
\end{itemize}
\end{theorem}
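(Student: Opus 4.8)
The plan is to derive all three assertions from the LAN expansion of Theorem~\ref{th1}, the contiguity and limiting law of Corollary~\ref{cr1}, and a law-of-large-numbers argument for the standardising factor $\widehat\varpi_n(\gamma_0,\beta)$; no new moment computation should be needed.

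The first step I would carry out is to show that $\widehat\varpi_n(\gamma_0,\beta)\overset{D}{\longrightarrow}\varpi(\gamma_0,\beta)$ in probability under $H_0$. Fix $j\in\{1,\ldots,k+1\}$. By $(A_6)$ the vectors $Z_{t-1}$ with indices in $[t_{j-1},t_j)$ form a stationary ergodic sequence with stationary law $F_j$, and since $\inf_x V(x)>0$ the map $x\mapsto V^{-2}(x)$ is bounded, with finite $F_j$-integral by $(A_7)$ taken at $\ell=2$. Birkhoff's ergodic theorem then gives $n_j(n)^{-1}\sum_{t=t_{j-1}}^{t_j}V^{-2}(Z_{t-1})\longrightarrow\int_{\mathbb R^p}V^{-2}(x)\,dF_j(x)$ almost surely as $n_j(n)\to\infty$, the at most $p$ boundary vectors of Remark~\ref{rm1}(ii) contributing an $O(1/n_j(n))$ term that is negligible. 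Hence $\widehat\mu_{j2}(\gamma_0)\to\mu_{j2}(\gamma_0)$, and with $\widehat\alpha_j\to\alpha_j$ (for instance $\widehat\alpha_j=n_j(n)/n$, which converges by $(A_5)$) one obtains $\widehat\mu_n(\gamma_0,\beta)\to\sum_{j=1}^{k+1}\alpha_j\beta_j^2\mu_{j2}(\gamma_0)=\varpi^2(\gamma_0,\beta)$, so $\widehat\varpi_n(\gamma_0,\beta)\to\varpi(\gamma_0,\beta)$ in probability under $H_0$; by the contiguity of $\{H^{(n)}_\beta\}$ and $\{H_0\}$ from Corollary~\ref{cr1}, the same convergence holds under $H^{(n)}_\beta$.

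Parts (i) and (ii) then follow from Slutsky's lemma. Under $H_0$, Theorem~\ref{th1} gives $\Delta_n(\gamma_0,\beta)\overset{D}{\longrightarrow}\mathcal N(0,\varpi^2(\gamma_0,\beta))$, so $\mathcal T_n(\gamma_0,\beta)=\Delta_n(\gamma_0,\beta)/\widehat\varpi_n(\gamma_0,\beta)\overset{D}{\longrightarrow}\mathcal N(0,1)$, which is (i), and the asymptotically level-$\alpha$ test rejects $H_0$ when $\mathcal T_n(\gamma_0,\beta)>z_\alpha$. Under $H^{(n)}_\beta$, Corollary~\ref{cr1} gives $\Delta_n(\gamma_0,\beta)\overset{D}{\longrightarrow}\mathcal N(\varpi^2(\gamma_0,\beta),\varpi^2(\gamma_0,\beta))$, hence $\mathcal T_n(\gamma_0,\beta)\overset{D}{\longrightarrow}\mathcal N(\varpi(\gamma_0,\beta),1)$, so the asymptotic power equals $P(\mathcal N(\varpi(\gamma_0,\beta),1)>z_\alpha)=1-\Phi(z_\alpha-\varpi(\gamma_0,\beta))$, which is (ii) (and reduces to $\alpha$ when $\beta=0$, i.e.\ $\varpi=0$, as it should).

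For (iii) I would appeal to Le Cam's asymptotic theory of testing in LAN families. Theorem~\ref{th1} establishes that $\{H^{(n)}_\beta\}$ is LAN at $\gamma_0$ with central sequence $\Delta_n(\gamma_0,\beta)$ and information $\varpi^2(\gamma_0,\beta)$, so the associated experiments converge to the one-dimensional Gaussian shift experiment; in that limit the Neyman--Pearson test based on the central statistic is the most powerful level-$\alpha$ test against the contiguous alternative, with power exactly $1-\Phi(z_\alpha-\varpi(\gamma_0,\beta))$. By the first step, $\mathcal T_n(\gamma_0,\beta)$ is asymptotically equivalent to $\Delta_n(\gamma_0,\beta)/\varpi(\gamma_0,\beta)$, whose one-sided test has, through the expansion of Theorem~\ref{th1} relating $\Lambda_n(\gamma_0,\beta)$ to $\Delta_n(\gamma_0,\beta)$, the same asymptotic rejection behaviour and asymptotic power as the likelihood-ratio test; hence the test based on $\mathcal T_n$ attains this power envelope, i.e.\ no asymptotically level-$\alpha$ test of $H_0$ against $H^{(n)}_\beta$ can do asymptotically better, which is the asserted local asymptotic optimality. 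I expect this last step to be the main obstacle: it requires stating ``locally asymptotically optimal'' precisely and quoting the appropriate optimality theorem for LAN experiments (as in \cite{lecam} or \cite{dreosebeke}), whereas the first three steps are routine once Theorem~\ref{th1} and Corollary~\ref{cr1} are in hand.
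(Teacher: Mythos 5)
Your proposal is correct and follows essentially the same route as the paper: part (i) from Theorem \ref{th1} plus Slutsky, part (ii) from the contiguity and the limit law of $\Delta_n(\gamma_0,\beta)$ under $H^{(n)}_\beta$ given by Corollary \ref{cr1} together with the factorization $\Delta_n/\widehat\varpi_n=(\Delta_n/\varpi)\cdot(\varpi/\widehat\varpi_n)$, and part (iii) from the LAN property and the Gaussian shift limit experiment as in \cite{lecam} or \cite{dreosebeke}. The only difference is that you spell out, via the ergodic theorem and $(A_5)$--$(A_7)$, the consistency of the natural estimator $\widehat\varpi_n(\gamma_0,\beta)$, which the paper simply assumes by taking any consistent estimator; this is a harmless (indeed welcome) extra detail, not a different method.
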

\begin{proof}
See Appendix.
\end{proof}
\subsection{The parametric models}
Now, we place ourselves in the framework of the model (\ref{mod}) with the functions $ T $ and $ V $ of known forms but depending on the unknown parameters. More precisely, we assume that $T (x) =T_\rho (x),$ $V (x) = V_\theta (x),$ $\rho\in\Theta\subset\mathbb R^l$ and $\theta\in\widetilde{\Theta}\subset\mathbb R^q.$ Let $\psi_0=(\rho_0^{\top},\theta_0^{\top})^{\top}\in\Theta \times \widetilde{\Theta}\subset \mathbb R^l \times \mathbb R^q$ the true nuisance parameter of the model (\ref{mod}). For $\gamma\in\mathbb R^{k+1}$ and $\psi=(\rho^{\top},\theta^{\top})^{\top}\in\Theta \times \widetilde{\Theta},$ define 
\begin{equation} \label{epsi1}
 \varepsilon_t(\psi,\gamma)=\dfrac{X_t-T_\rho(Z_{t-1})-\gamma^{\top}\omega(t)}{V_\theta(Z_{t-1})},  t\in\mathbb Z.
 \end{equation}
We make the following additional assumptions:
\begin{itemize}
\item[$(B_0)$:] For any $\theta\in\widetilde{\Theta}$ and  $z\in\mathbb R^p,$ $V_\theta(z)>\tau,$ where $\tau$ is a  positive real number.
\item[$(B_1)$:] There exists $ \delta\geq0$ such as $\displaystyle\int_{\mathbb R^p}{||x||^{2+\delta}}dF_j(x)<\infty,$ $j=1,\ldots,k+1.$
\item[$(B_2)$:] $\displaystyle\int_{-\infty}^\infty{|x\phi'_f(x)|f(x)}dx<\infty.$
\item[$(B_3)$:] Denote $\mbox{Int}(\Theta)$ and $\mbox{Int}(\widetilde{\Theta})$ the interior of  $\Theta$ and $\widetilde{\Theta}$ respectively. The functions $T_\rho(z)$ and $V_\theta(z)$ are continuous and differentiable with respect to $\rho\in \mbox{ Int}(\Theta)$ and $\theta\in \mbox{Int}(\widetilde{\Theta})$ respectively, and there exist finite numbers $r_1$ and $r_2$ such that $\overline{B}(\rho_0,r_1)\subset \text{Int}(\Theta),$ $\overline{B}(\theta_0,r_2)\subset \text{Int}(\widetilde{\Theta}) $, such that the largest number among $\sup_{\rho\in  \overline{B}(\rho_0,r_1)} ||\partial_{\rho} T_\rho(z)||$,  $\sup_{\theta \in \overline{B}(\theta_0,r_2)}{|V_\theta(z)|}$, $\sup_{\theta \in \overline{B}(\theta_0,r_2)} ||\partial_{\theta} V_\theta(z)||$ and $\sup_{\theta \in \overline{B}(\theta_0,r_2)} {||\partial_{\theta}^2V_\theta(z)||_M }$ is bounded by some positive function $\vartheta(z)$ such that 
for any $j=1,\ldots,k+1$, 
$\int_{\mathbb R^p}{\vartheta^3(x)}dF_j(x)< \infty.$
\item[$(B_4)$:] The true parameter $\psi_0=(\rho^{\top}_0,\theta^{\top}_0)^{\top}$ has a consistent estimator $\psi_n=(\rho^{\top}_n,\theta^{\top}_n)^{\top}$ satisfying
 $$n^{\frac{1}{2}}(\psi_n-\psi_0)= n^{-\frac{1}{2}}\sum_{t=1}^n{} \Psi (Z_{t-1},\psi_0) \Omega^{\top}[\varepsilon_t(\psi_0,\gamma_0)]+ o_P(1),$$
%where for any  $z\in\mathbb R^p,$ 
 $$\Psi(z,\psi_0)=(\Psi^{\top}_1(z,\psi_0),\Psi^{\top}_2(z,\psi_0))^{\top}, \ \ \ \Omega(z)=(\Omega_1(z),\Omega_2(z))^{\top}  \ z\in\mathbb R^p, $$
 $$ \Psi_m(z,\psi_0)=(\Psi_{m1}(z,\psi_0),\ldots, \Psi_{ml}(z,\psi_0))^{\top}\in\mathbb R^{l}, \ m=1,2, \ \  \Omega_1(z) \in \mathbb R, \ \  z\in\mathbb R^p,$$
 %$$ \Psi_2(z,\psi_0)=(\Psi_{21}(z,\psi_0),\ldots, \Psi_{2q}(z,\psi_0))^{\top}\in\mathbb R^{q}, \ z\in\mathbb R^p,$$
$\int_{\mathbb R^p} ||\Psi(x,\psi_0)||^{2+\delta}dF_j(x)<\infty,$
% $j=1,\ldots,k+1,$
 $\int_{\mathbb R}{||\Omega(x)||^{2+\delta}f(x)}dx<\infty$, $\int_{\mathbb R}{\Omega(x)f(x)}dx=\mathbf{0}.$
%\in\mathbb R^2.$ 
 %De plus, $||\Pi(Z,\psi_0)||<\infty$ et $||\Gamma(\varepsilon(\psi_0,\gamma_0))||<\infty,$
 %où $Z=(Z_0,\ldots,Z_{n-1})$ et $\varepsilon=(\varepsilon_1,\ldots,\varepsilon_n).$
%\item[$(B_5)$:] $\phi''_f$ is bounded.
  \end{itemize}
\begin{remark}
Assumptions ($B_0$)-($B_3$) are satisfied by usual models as parametric AR, ARCH, EXPAR, TARCH models with Gaussian noise. For these models, assumption ($B_4$) is satisfied by usual estimators as least-squares or pseudo-likelihood estimators.
\end{remark}

%
%We have the following two properties for $\psi_n$:
\begin{proposition} \label{pr1}
Assume that $(A_1)$-$(A_7)$, $(B_4)$ hold. Then as $n\rightarrow\infty,$
\begin{itemize}
\item[(i)] Under $H_0$, $$\sqrt{n}(\psi_n-\psi_0)\overset{D}{\longrightarrow}\mathcal{N}(0,\Sigma),$$
\item[(ii)] For any $\beta \in \mathbb R^{k+1}$, 
under $H^{(n)}_{\beta}$, $$\sqrt{n}(\psi_n-\psi_0)\overset{D}{\longrightarrow}\mathcal{N}(\nu,\Sigma)$$
\end{itemize}
%where 
$$\nu:=\displaystyle\int_{\mathbb R}{\Omega^{\top}(x)\phi_f(x)f(x)}dx\sum_{j=1}^{k+1}{\alpha_j\beta_j\displaystyle\int_{\mathbb R^p}{\dfrac{\Psi(x,\psi_0)}{V_{\theta_0}(x)}}dF_j(x) }$$
$$\Sigma=\begin{pmatrix} 
\Sigma_{11}& \Sigma_{12} \\ 
\Sigma_{21}  & \Sigma_{22}
\end{pmatrix}$$
%with
\begin{eqnarray*}
\Sigma_{ml}&: =&\int_{\mathbb R}{\Omega_m(x)\Omega_l(x)f(x)}dx\sum_{j=1}^{k+1}{\alpha_j}\int_{\mathbb R^p}{\Psi_m(x,\psi_0)\Psi^{\top}_l(x,\psi_0)}dF_j(x), \ \ m,l=1,2.
\end{eqnarray*}
\end{proposition}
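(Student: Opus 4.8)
The plan is to derive everything from assumption $(B_4)$, which gives the linearisation $\sqrt n(\psi_n-\psi_0)=W_n+o_P(1)$ under $H_0$, where
$$W_n:=n^{-1/2}\sum_{t=1}^{n}\Psi(Z_{t-1},\psi_0)\,\Omega^{\top}\!\left[\varepsilon_t(\psi_0,\gamma_0)\right];$$
by the contiguity of $\{H^{(n)}_{\beta}\}$ and $\{H_0\}$ established in Corollary~\ref{cr1}, this linearisation also holds under $H^{(n)}_{\beta}$. It therefore suffices to determine the limiting law of the statistic $W_n$ under each hypothesis. Under $H_0$ one has $\varepsilon_t(\psi_0,\gamma_0)=\varepsilon_t$, so $W_n=n^{-1/2}\sum_t\Psi(Z_{t-1},\psi_0)\Omega^{\top}(\varepsilon_t)$, to which a martingale central limit theorem applies; this gives (i). For $H^{(n)}_{\beta}$ I would not analyse $W_n$ directly (there $\varepsilon_t(\psi_0,\gamma_0)=\varepsilon_t+\beta^{\top}\omega(t)/[\sqrt n\,V_{\theta_0}(Z_{t-1})]$ and no smoothness of $\Omega$ is assumed), but instead transfer its law from $H_0$ via Le Cam's third lemma, exactly as in the proof of Corollary~\ref{cr1}.

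For (i), under $H_0$ each summand $\xi_{n,t}:=n^{-1/2}\Psi(Z_{t-1},\psi_0)\Omega^{\top}(\varepsilon_t)$ is $G_t$-measurable, and since $\varepsilon_t$ is independent of $G_{t-1}$ and $\int_{\mathbb R}\Omega(x)f(x)\,dx=\mathbf 0$ (from $(B_4)$), we get $\mathbb E[\xi_{n,t}\mid G_{t-1}]=0$, so $(\xi_{n,t})$ is a martingale difference array for $(G_t)$. I would invoke a CLT for such arrays and verify its two hypotheses. For the convergence of the conditional covariance, independence of $\varepsilon_t$ and $G_{t-1}$ makes the relevant block of the conditional second moment of $\xi_{n,t}$ equal to $n^{-1}\big(\int_{\mathbb R}\Omega_m\Omega_\ell\,f\big)\,\Psi_m(Z_{t-1},\psi_0)\Psi_\ell^{\top}(Z_{t-1},\psi_0)$; splitting $\sum_{t=1}^{n}$ over the segments $[t_{j-1},t_j)$, applying Birkhoff's ergodic theorem on each one (assumption $(A_6)$, with integrability coming from $\int\|\Psi\|^{2}dF_j<\infty$, itself a consequence of $(B_4)$) and using $n_j(n)/n\to\alpha_j$ from $(A_5)$ yields $n^{-1}\sum_{t=1}^{n}\Psi_m(Z_{t-1},\psi_0)\Psi_\ell^{\top}(Z_{t-1},\psi_0)\overset{P}{\longrightarrow}\sum_{j=1}^{k+1}\alpha_j\int\Psi_m\Psi_\ell^{\top}dF_j$, so the conditional covariance converges to $\Sigma$; the at most $p$ vectors $Z_{t-1}$ straddling two consecutive segments (Remark~\ref{rm1}(ii)) contribute $O(1/n)$ and are negligible. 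For the conditional Lindeberg condition, the bounds $\int\|\Psi(x,\psi_0)\|^{2+\delta}dF_j(x)<\infty$ and $\int\|\Omega(x)\|^{2+\delta}f(x)\,dx<\infty$ in $(B_4)$, together with $\varepsilon_t\perp G_{t-1}$, give $\sum_t\mathbb E[\|\xi_{n,t}\|^{2+\delta}\mid G_{t-1}]\le n^{-(1+\delta/2)}\big(\mathbb E\|\Omega(\varepsilon_0)\|^{2+\delta}\big)\sum_t\mathbb E[\|\Psi(Z_{t-1},\psi_0)\|^{2+\delta}\mid G_{t-1}]=o_P(1)$ by the same ergodic argument. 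This proves (i).

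For (ii), I would establish under $H_0$ the joint convergence of $(W_n^{\top},\Lambda_n(\gamma_0,\beta))^{\top}$ and apply Le Cam's third lemma. By Theorem~\ref{th1} it is enough to get the joint limit of $(W_n^{\top},\Delta_n(\gamma_0,\beta))^{\top}$; since under $H_0$ $\Delta_n(\gamma_0,\beta)=n^{-1/2}\sum_t\dfrac{\beta^{\top}\omega(t)}{V_{\theta_0}(Z_{t-1})}\phi_f(\varepsilon_t)$ is again a normalised sum of $(G_t)$-martingale differences (using $\int\phi_f f=0$, cf. Remark~\ref{rm2}(ii)), the Cram\'er--Wold device reduces the joint CLT to a one-dimensional martingale CLT, the only new ingredient being the limiting conditional cross-covariance: using $\mathbb E[\Omega^{\top}(\varepsilon_t)\phi_f(\varepsilon_t)]=\int_{\mathbb R}\Omega^{\top}(x)\phi_f(x)f(x)\,dx$, $\beta^{\top}\omega(t)=\beta_j$ on $[t_{j-1},t_j)$, and Birkhoff's theorem on each segment (with $\int\|\Psi/V_{\theta_0}\|\,dF_j<\infty$ by $(A_7)$ and $(B_4)$), one obtains
$$n^{-1}\sum_{t=1}^{n}\beta^{\top}\omega(t)\,\frac{\Psi(Z_{t-1},\psi_0)}{V_{\theta_0}(Z_{t-1})}\,\mathbb E\!\left[\Omega^{\top}(\varepsilon_t)\phi_f(\varepsilon_t)\right]\ \overset{P}{\longrightarrow}\ \nu .$$
Hence, under $H_0$,
$$\begin{pmatrix}W_n\\ \Lambda_n(\gamma_0,\beta)\end{pmatrix}\ \overset{D}{\longrightarrow}\ \mathcal N\!\left(\begin{pmatrix}\mathbf 0\\ -\tfrac12\mu(\gamma_0,\beta)\end{pmatrix},\ \begin{pmatrix}\Sigma&\nu\\ \nu^{\top}&\mu(\gamma_0,\beta)\end{pmatrix}\right),$$
and Le Cam's third lemma (\cite{lecam}, \cite{dreosebeke}) gives $W_n\overset{D}{\longrightarrow}\mathcal N(\nu,\Sigma)$ under $H^{(n)}_{\beta}$; combined with the linearisation of $(B_4)$, valid under $H^{(n)}_{\beta}$ by contiguity, this yields (ii).

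The step demanding the most care is the martingale central limit theorem for the globally non-stationary array in part (i)---chiefly the convergence of the conditional covariance---where the sum must be split segment by segment, the ergodic theorem applied within each $[t_{j-1},t_j)$ under $(A_6)$, the $O(p)$ overlap vectors at the segment boundaries controlled, and the pieces reassembled with the weights $\alpha_j$ of $(A_5)$. By comparison, verifying the conditional Lindeberg condition from the $(2+\delta)$-moment assumptions of $(B_4)$, computing the cross-covariance, and transferring the linearisation remainder to $H^{(n)}_{\beta}$ by contiguity are routine.
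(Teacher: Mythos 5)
Your proposal follows essentially the same route as the paper's proof: under $H_0$ the $(B_4)$ linearisation is treated as a martingale difference array with respect to $(G_t)$, its conditional covariance is computed segment by segment via the ergodic theorem with the weights $\alpha_j$ from $(A_5)$, a Lindeberg-type condition is checked, and the limit under $H^{(n)}_{\beta}$ is obtained from the joint asymptotic normality of the linearised score with $\Lambda_n(\psi_0,\gamma_0,\beta)$ (whose cross-covariance limit is $\nu$) together with Le Cam's third lemma. The only cosmetic differences are that you verify Lindeberg through the $(2+\delta)$-moment bounds of $(B_4)$ where the paper reuses the third-moment Hölder/Markov argument of Theorem \ref{th1}, and you establish the joint normality by a Cramér--Wold martingale CLT where the paper computes the limiting covariance directly; both are equivalent in substance.
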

\begin{proof}
See Appendix.
\end{proof}
\subsubsection{The parameter $\gamma_0$ is known}

\noindent
In practice, the case where the parameter $\gamma_0$ is known may be encountered when there is no apparent change, and one wishes to test for possible  weak changes. 
That is the situation where $\gamma_0=0$. This is what is usually tested in the literature.
 \smallskip

\noindent
For any $\beta \in\mathbb R^{k+1},$  denote by 
$ \Lambda_n(\psi_0,\gamma_0,\beta)$ the log-likelihood ratio of $H_0$ against $H^{(n)}_{\beta}$ and by $\Delta_n(\psi_0, \gamma_0, \beta)$ the counterpart of $\Delta(\gamma_0, \beta)$ given by (\ref{cent}) with $\varepsilon_t(\gamma_0)$ defined by (\ref{epsi}), replaced by $\varepsilon_t(\psi_0, \gamma_0)$ defined by (\ref{epsi1}).
For all $l \le 3$ and $j=1, \ldots, k+1$, define the following real numbers 
%$\mu_{jl}(\psi_0, \gamma_0)$ and $\mu(\psi_0,\gamma_0,\beta)$
$$\mu_{jl}(\psi_0, \gamma_0)=I(f) \int_{\mathbb R^p} {1 \over V_{\theta_0}^l(x)} dF_j(x)$$
%and
$$\mu(\psi_0,\gamma_0,\beta)=\sum_{j=1}^{k+1} \alpha_j \beta_j^2 \mu_{j2}(\psi_0,\gamma_0)=\varpi^2(\psi_0,\gamma_0,\beta).$$
\noindent 
Note that, since  $V_{\theta}(x) > \tau>0$, for any $x$ and $\theta$, the $ \mu_{j\ell}(\psi_0,\gamma_0)$'s are finite. Next, 
since for any $j=1,\ldots,k+1,$ $\mu_{j2}(\psi_0, \gamma_0)$ depends on $F_j$ which itself depends on $\psi_0$ (which is unknown) and on $\gamma_0$, we estimate it by $\widehat{\mu}_{j2}(\psi_n, \gamma_0)$ given by 
$$\widehat{\mu}_{j2}(\psi_n, \gamma_0)=I(f) \dfrac{1}{n_j(n)}\sum_{t={t_{j-1}}}^{t_j}{\dfrac{1}{V_{\widehat{\theta}_n}^2(Z_{t-1})}}.$$
So, although we can consider any consistent estimators of $\mu(\psi_0, \gamma_0,\beta)$ and $\varpi(\psi_0, \gamma_0,\beta)=\mu^{1 \over2}(\psi_0, \gamma_0,\beta)$,  we will take them here to be respectively, 
$$\widehat{\mu}_n(\psi_n,\gamma_0,\beta)=\sum_{j=1}^{k+1}{\widehat{\alpha}_j\beta^2_j\widehat{\mu}_{j2}(\psi_n,\gamma_0)} \ \ \text{  and    } \ \ \widehat{\varpi}_n(\psi_n,\gamma_0,\beta)=\widehat{\mu}_n^{\frac{1}{2}}(\psi_n,\gamma_0,\beta), $$ 
where for all $ j=1,\ldots,k+1,$ $\widehat{\alpha}_j$ is an estimator of  $\alpha_j$ which can be taken to be $\widehat{\alpha}_j={n_j(n)/n}.$
\begin{proposition} \label{pr2}
Assume that $(A_1)$-$(A_7)$, $(B_0)$-$(B_4)$ hold. Then, for any sequence of  consistent and asymptotic normal estimators $\lbrace\psi_n\rbrace_{n\geq 1}$ of $\psi_0$, under $H_0$, as $n\rightarrow\infty,$ for any $\beta \in \mathbb R^{k+1}$, we have
\begin{itemize}
\item[(i)]
$$\Delta_n(\psi_0,\gamma_0,\beta)=\Delta_n(\psi_{N(n)},\gamma_0,\beta)+o_P(1)$$
\item[(ii)] $$\widehat{\varpi}_n(\psi_n,\gamma_0,\beta)\longrightarrow{\varpi}({\psi}_0,\gamma_0,\beta),$$
\end{itemize}
where $\lbrace N(n)\rbrace_{n\geq 1}$ stands for a subset $\lbrace 1,\ldots,n\rbrace$ such that $n/N(n)\longrightarrow 0$ as $n\rightarrow\infty$.
\end{proposition}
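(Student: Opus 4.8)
\textit{Proof of (i).} Write $g_t(\psi)=\dfrac{\beta^\top\omega(t)}{V_\theta(Z_{t-1})}\phi_f[\varepsilon_t(\psi,\gamma_0)]$, with $\varepsilon_t(\psi,\gamma_0)$ as in (\ref{epsi1}), so that $\Delta_n(\psi,\gamma_0,\beta)-\Delta_n(\psi_0,\gamma_0,\beta)=n^{-1/2}\sum_{t=1}^n\{g_t(\psi)-g_t(\psi_0)\}$; the device is to substitute for $\psi_0$ only an estimator built from so much data that its error is negligible even against the crude Lipschitz bound on $g_t$. Since $\{\psi_n\}$ is asymptotically normal (cf. $(B_4)$ read at sample size $N(n)$), $\|\psi_{N(n)}-\psi_0\|=O_P(N(n)^{-1/2})$, and $n/N(n)\to0$ gives $n^{1/2}\|\psi_{N(n)}-\psi_0\|=O_P((n/N(n))^{1/2})=o_P(1)$. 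Let $r=\min(r_1,r_2)$ as in $(B_3)$ and put $\chi_t=\sup_{\psi\in\overline{B}(\psi_0,r)}\|\partial_\psi g_t(\psi)\|$. Differentiating $g_t$ brings in $\phi'_f$ at $\varepsilon_t(\psi,\gamma_0)$, the derivatives $\partial_\rho T_\rho$ and $\partial_\theta V_\theta$, and negative powers of $V_\theta$; by $(A_2)$, $|\phi'_f(u)|\le|\phi'_f(0)|+c_\phi|u|$; by $(B_0)$, $V_\theta^{-1}<\tau^{-1}$; and by $(B_3)$, on $\overline{B}(\psi_0,r)$ the increments of $T_\rho$ and $V_\theta$ and the derivatives $\partial_\rho T_\rho,\partial_\theta V_\theta$ are bounded by $\vartheta(Z_{t-1})$, whence $|\varepsilon_t(\psi,\gamma_0)|\le\tau^{-1}\vartheta(Z_{t-1})(r+|\varepsilon_t|)$ there. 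Combining these gives $\chi_t\le\Xi(Z_{t-1},\varepsilon_t)$ for some $\Xi$ with $E_j[\Xi]<\infty$ on each segment, by $(A_1)$, $(A_4)$, $(B_2)$ and $(B_3)$. On $\{\psi_{N(n)}\in\overline{B}(\psi_0,r)\}$, whose probability tends to $1$ by consistency, the mean value theorem gives $|g_t(\psi_{N(n)})-g_t(\psi_0)|\le\|\psi_{N(n)}-\psi_0\|\,\chi_t$, hence
\[
\big|\Delta_n(\psi_{N(n)},\gamma_0,\beta)-\Delta_n(\psi_0,\gamma_0,\beta)\big|\le n^{1/2}\|\psi_{N(n)}-\psi_0\|\cdot\frac1n\sum_{t=1}^n\chi_t.
\]
By the ergodic theorem under $(A_6)$ applied to each of the $k+1$ blocks (the at most $p$ overlap vectors per block being negligible, Remark~\ref{rm1}(ii)), $n^{-1}\sum_{t=1}^n\chi_t=\sum_{j=1}^{k+1}\alpha_j E_j[\Xi]+o_P(1)=O_P(1)$, while the first factor is $o_P(1)$; this proves (i).

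\textit{Proof of (ii).} As $\widehat{\alpha}_j=n_j(n)/n\to\alpha_j$ by $(A_5)$ and the square root is continuous, it suffices to show $\widehat{\mu}_{j2}(\psi_n,\gamma_0)\to\mu_{j2}(\psi_0,\gamma_0)$ for each $j$. Split $\widehat{\mu}_{j2}(\psi_n,\gamma_0)-\mu_{j2}(\psi_0,\gamma_0)=A_n+B_n$, with $A_n=\widehat{\mu}_{j2}(\psi_n,\gamma_0)-\widehat{\mu}_{j2}(\psi_0,\gamma_0)$ and $B_n=\widehat{\mu}_{j2}(\psi_0,\gamma_0)-\mu_{j2}(\psi_0,\gamma_0)$. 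Since $V_{\theta_0}^{-2}(Z_{t-1})\le\tau^{-2}$ is bounded, hence $F_j$-integrable by $(B_0)$, the ergodic theorem under $(A_6)$ gives $B_n\to0$. For $A_n$, the mean value theorem together with $(B_0)$--$(B_3)$ yields
\[
|A_n|\le I(f)\,\|\theta_n-\theta_0\|\cdot\frac{2}{\tau^{3}n_j(n)}\sum_{t=t_{j-1}}^{t_j}\vartheta(Z_{t-1}),
\]
whose average tends to $2\tau^{-3}I(f)E_j[\vartheta]<\infty$ by the ergodic theorem (finiteness from $(B_3)$), so $A_n=o_P(1)$ by consistency of $\psi_n$. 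Hence $\widehat{\mu}_n(\psi_n,\gamma_0,\beta)\to\sum_{j=1}^{k+1}\alpha_j\beta_j^2\mu_{j2}(\psi_0,\gamma_0)=\mu(\psi_0,\gamma_0,\beta)=\varpi^2(\psi_0,\gamma_0,\beta)$, and continuity of the square root gives $\widehat{\varpi}_n(\psi_n,\gamma_0,\beta)\to\varpi(\psi_0,\gamma_0,\beta)$.

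\textit{Main obstacle.} The one genuinely delicate point is the envelope step in (i): dominating $\sup_{\psi\in\overline{B}(\psi_0,r)}\|\partial_\psi g_t(\psi)\|$ by a single $F_j$-integrable function requires controlling $|\varepsilon_t(\psi,\gamma_0)|$, and thereby the increments of $T_\rho$ and $V_\theta$, uniformly over the whole ball, which is precisely where $(A_2)$, $(B_0)$, $(B_2)$ and $(B_3)$ (with $\int\vartheta^3\,dF_j<\infty$) are used jointly; once such an envelope is in hand, everything else reduces to a routine pairing of the mean value theorem with the segment-wise ergodic theorem. Note also that the fast rate of $\psi_{N(n)}$ forced by $n/N(n)\to0$ is what makes the crude bound suffice, so that no orthogonalization or bias-correction term is needed here.
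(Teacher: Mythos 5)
Your proof is correct, but it takes a genuinely different route from the paper's for part (i). The paper proceeds by a second-order Taylor expansion of $\Delta_n(\cdot,\gamma_0,\beta)$ around $\psi_n$, shows the quadratic term is negligible by proving that $\|\partial_\psi^2\Delta_n(\widetilde\psi_n,\gamma_0,\beta)\|_M/\sqrt n$ converges (this is where the bound on $\partial_\theta^2 V_\theta$ in $(B_3)$ enters), and then invokes a separate tangent-space lemma (Lemma \ref{lm1}) to replace $\psi_n$ by $\psi_{N(n)}$ in the linearized expression, using $\sqrt n(\psi_0-\psi_{N(n)})=o_P(1)$. You instead compare $\Delta_n(\psi_{N(n)},\gamma_0,\beta)$ and $\Delta_n(\psi_0,\gamma_0,\beta)$ directly: a mean-value bound with a ball-uniform, $F_j$-integrable envelope for $\partial_\psi g_t$, multiplied by the superefficient rate $\sqrt n\|\psi_{N(n)}-\psi_0\|=O_P(\sqrt{n/N(n)})=o_P(1)$. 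This avoids both the Hessian control and Lemma \ref{lm1}, and only needs first-derivative bounds; the price is the envelope construction, which you justify adequately (the only detail worth spelling out is that $(A_2)$ also yields the quadratic growth $|\phi_f(u)|\le C(1+u^2)$ needed for the $\partial_\theta V_\theta/V_\theta^2$ term, by the same integration of the Lipschitz bound you already use for $\phi'_f$; integrability then follows from $\int\vartheta^3\,dF_j<\infty$, $E\varepsilon_t^2=1$ and the independence of $\varepsilon_t$ from $Z_{t-1}$). For part (ii) the two arguments are essentially the same (segment-wise ergodic theorem plus the lower bound $\tau$ and smoothness of $\theta\mapsto V_\theta$); your mean-value treatment of the $\theta_n$-substitution is in fact more explicit than the paper's appeal to dominated convergence.
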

\begin{proof}
See Appendix.
\end{proof}
\medskip

\noindent For any $\beta \in \mathbb R^{k+1}$, for testing $H_0$ against $H^{(n)}_{\beta}$, we consider the statistic 
$$ \mathcal{T}_n(\psi_{N(n)},\gamma_0,\beta)= \dfrac{\Delta_n(\psi_{N(n)},\gamma_0,\beta)}{\widehat{\varpi}_n(\psi_{N(n)},\gamma_0,\beta)}.$$
%\noindent We have the following theorem.
%
\begin{theorem}  \label{th3}
\noindent Assume that $(A_1)$-$(A_7)$, $(B_0)$-$(B_4)$ hold. Then, for any $\beta \in \mathbb R^{k+1}$,
\begin{itemize}
\item[(i)]  Under $H_0$, $\mathcal{T}_n(\psi_{N(n)},\gamma_0,\beta)\overset{D}{\longrightarrow}\mathcal{N}( 0,1),$ as $n\rightarrow\infty.$
\item[(ii)]  Under $H^{(n)}_{\beta}$, at level of significance $\alpha\in(0,1)$, the asymptotic power of the test based on $\mathcal{T}_n(\psi_{N(n)},\gamma_0,\beta)$  is $\mathcal P_{k,t^k}= 1-\Phi\left(z_\alpha -\varpi(\psi_0,\gamma_0,\beta)\right)$, where $z_\alpha$ is the $(1-\alpha)$-quantile of a standard Gaussian distribution with cumulative distribution $\Phi.$
\item[(iii)] The test based on $\mathcal{T}_n(\psi_{N(n)},\gamma_0,\beta)$ is locally asymptotically optimal.
\end{itemize}
\end{theorem}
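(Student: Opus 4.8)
\textbf{Proof proposal for Theorem \ref{th3}.}

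The plan is to reduce Theorem \ref{th3} to the already-established Theorem \ref{th2} by showing that replacing the known nuisance $\psi_0$ by the estimator $\psi_{N(n)}$ does not alter the asymptotic behaviour of the test statistic. The key input is Proposition \ref{pr2}: part (i) gives $\Delta_n(\psi_0,\gamma_0,\beta)=\Delta_n(\psi_{N(n)},\gamma_0,\beta)+o_P(1)$, and part (ii) gives $\widehat{\varpi}_n(\psi_n,\gamma_0,\beta)\to\varpi(\psi_0,\gamma_0,\beta)$; since $n/N(n)\to 0$, the same convergence holds along the thinned subsequence, so $\widehat{\varpi}_n(\psi_{N(n)},\gamma_0,\beta)\to\varpi(\psi_0,\gamma_0,\beta)>0$ in probability. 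For (i): under $H_0$, Theorem \ref{th2}(i) (in its $\psi_0$-known, parametric form, i.e. using $\varepsilon_t(\psi_0,\gamma_0)$) gives $\Delta_n(\psi_0,\gamma_0,\beta)\overset{D}{\to}\mathcal N(0,\mu(\psi_0,\gamma_0,\beta))$; combining with Proposition \ref{pr2}(i) and Slutsky's lemma yields $\Delta_n(\psi_{N(n)},\gamma_0,\beta)\overset{D}{\to}\mathcal N(0,\mu(\psi_0,\gamma_0,\beta))$, and dividing by $\widehat{\varpi}_n(\psi_{N(n)},\gamma_0,\beta)\to\varpi(\psi_0,\gamma_0,\beta)=\mu^{1/2}(\psi_0,\gamma_0,\beta)$ gives $\mathcal T_n(\psi_{N(n)},\gamma_0,\beta)\overset{D}{\to}\mathcal N(0,1)$.

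For (ii): first one must check that contiguity of $\{H^{(n)}_\beta\}$ and $\{H_0\}$ still holds in the parametric setting — this follows from the LAN expansion in the parametric framework (the analogue of Corollary \ref{cr1} obtained from the parametric version of Theorem \ref{th1}, whose central statistic is $\Delta_n(\psi_0,\gamma_0,\beta)$). By contiguity, the $o_P(1)$ terms in Proposition \ref{pr2}(i) and the convergence in Proposition \ref{pr2}(ii) persist under $H^{(n)}_\beta$. Under $H^{(n)}_\beta$, the parametric analogue of Corollary \ref{cr1} gives $\Delta_n(\psi_0,\gamma_0,\beta)\overset{D}{\to}\mathcal N(\mu(\psi_0,\gamma_0,\beta),\mu(\psi_0,\gamma_0,\beta))$; transferring this to $\Delta_n(\psi_{N(n)},\gamma_0,\beta)$ via Proposition \ref{pr2}(i) and dividing by the convergent denominator yields $\mathcal T_n(\psi_{N(n)},\gamma_0,\beta)\overset{D}{\to}\mathcal N(\varpi(\psi_0,\gamma_0,\beta),1)$. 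Hence the asymptotic power at level $\alpha$ is $P(\mathcal N(\varpi(\psi_0,\gamma_0,\beta),1)>z_\alpha)=1-\Phi(z_\alpha-\varpi(\psi_0,\gamma_0,\beta))$, which is $\mathcal P_{k,t^k}$.

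For (iii): by the parametric LAN property, the limit experiment is Gaussian with shift parameter $\beta$, and the one-sided test based on the efficient central statistic $\Delta_n(\psi_0,\gamma_0,\beta)$ (suitably normalized) is, by the Neyman–Pearson lemma applied in the limit Gaussian experiment, most powerful for the corresponding one-sided limiting problem; invariance of the power under the asymptotically negligible substitution $\psi_0\rightsquigarrow\psi_{N(n)}$ established in (i)–(ii) shows $\mathcal T_n(\psi_{N(n)},\gamma_0,\beta)$ attains the same local power, hence is locally asymptotically optimal. The main obstacle is part (ii): one must be careful that contiguity is available in the parametric framework \emph{before} invoking Le Cam's third lemma to transfer the $o_P(1)$ and consistency statements from $H_0$ to $H^{(n)}_\beta$ — this requires the parametric version of the LAN expansion, which is exactly what justifies writing $\Delta_n(\psi_0,\gamma_0,\beta)$ as the central statistic and is implicitly contained in Proposition \ref{pr2} and the construction preceding Theorem \ref{th3}. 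The estimation of $I(f)$ entering $\widehat\mu_{j2}$ (treated as known throughout this section) and the role of the split index $N(n)$ in decoupling the estimator from the data used in $\Delta_n$ are the technical points that make Proposition \ref{pr2}, and therefore this theorem, go through.
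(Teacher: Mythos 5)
Your proposal is correct and follows essentially the same route as the paper: reduce to the known-$\psi_0$ case via Proposition \ref{pr2} (asymptotic equivalence of the central statistics and consistency of $\widehat{\varpi}_n$), invoke the parametric analogue of Theorem \ref{th1}/Corollary \ref{cr1} together with contiguity to get the $\mathcal N(\varpi(\psi_0,\gamma_0,\beta),1)$ limit under $H^{(n)}_\beta$, and conclude optimality from the LAN structure and the Gaussian shift limit experiment, exactly as the paper does. Your explicit remarks on needing contiguity in the parametric framework before transferring the $o_P(1)$ statements correspond to the paper's brief "by contiguity, the above asymptotics still hold" and to its "adaptation of the proof of Corollary \ref{cr1}".
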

\begin{proof}
See Appendix.
\end{proof}
\subsubsection{The parameter $\gamma_0$ is unknown}

\noindent 
In practice $\gamma_0$ is generally unknown. It has to be estimated, as well as  
%In particular, it may appear when changes with known locations have already been detected by some method, but one suspects the existence of other missing weak changes. 
%Both $\gamma_0$ and 
$\gamma_n= \gamma_0+\beta/\sqrt{n}$, 
 for the computation of the likelihood-ratio statistic. 
 For any $\beta \in \mathbb R^{k+1}$, let $\widehat{\gamma}_{0,n}$
and $\widetilde{\gamma}_{n}$
 be respectively, the maximum likelihood estimator of $\gamma_0$ and $\gamma_n$. It is easy to check that for larger values of $n$, in probability, 
$$\widetilde{\gamma}_{n}=\widehat{\gamma}_{0,n}+\dfrac{\beta}{\sqrt{n}}.$$
%
%
%\begin{remark}
%$(B_5)$ is satisfied at least for Gaussian $f$, for which a routine algebra yields $\mathbf{b}(\beta)=-\beta.$
%\end{remark}

\noindent
Let $N(n)$ be any sub-sequence of $\lbrace 1,\ldots,n\rbrace$ satisfying $n/N(n)\longrightarrow 0$ as $n\rightarrow \infty.$ For any $\beta \in \mathbb R^{k+1}$, our test statistic for this testing problem is 
$$\mathcal{T}_n(\psi_{N(n)},\widehat{\gamma}_{0,N(n)},\beta)=\dfrac{\Delta_n(\psi_{N(n)},\widehat{\gamma}_{0,N(n)},\beta)}{\widehat{\varpi}_n(\psi_{N(n)},\widehat{\gamma}_{0,N(n)},\beta)}.$$
\begin{proposition} \label{pr3}
Assume that $(A_1)$-$(A_7)$, $(B_0)$-$(B_4)$ hold. Then, for any sequence of consistent and asymptotically normal estimators $\lbrace (\psi_n,\widehat {\gamma}_{0,n})\rbrace_{n\geq 1}$ of $(\psi_0,\gamma_0)$, as $n\rightarrow\infty,$ under $H_0$, for any $\beta \in \mathbb R^{k+1}$,
$$\Delta_n(\psi_0,{\gamma}_0,\beta)=\Delta_n(\psi_{N(n)},\widehat{\gamma}_{0,{N(n) }},\beta)+o_P(1).$$
\end{proposition}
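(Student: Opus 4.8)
The plan is to deduce the expansion from Proposition~\ref{pr2}(i) by a telescoping decomposition that separates the replacement of the nuisance parameter from the replacement of $\gamma_0$. I would write
$$\Delta_n(\psi_0,\gamma_0,\beta)-\Delta_n(\psi_{N(n)},\widehat{\gamma}_{0,N(n)},\beta)=A_n+B_n,$$
with $A_n=\Delta_n(\psi_0,\gamma_0,\beta)-\Delta_n(\psi_{N(n)},\gamma_0,\beta)$ and $B_n=\Delta_n(\psi_{N(n)},\gamma_0,\beta)-\Delta_n(\psi_{N(n)},\widehat{\gamma}_{0,N(n)},\beta)$. Since $\{(\psi_n,\widehat{\gamma}_{0,n})\}$ is consistent and asymptotically normal, so is $\{\psi_n\}$, and Proposition~\ref{pr2}(i) applies and gives $A_n=o_P(1)$. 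Everything then reduces to showing $B_n=o_P(1)$.

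For $B_n$ I would Taylor-expand the scalar map $\gamma\mapsto\Delta_n(\psi_{N(n)},\gamma,\beta)$ around $\gamma_0$. From (\ref{epsi1}), $\partial_{\gamma}\varepsilon_t(\psi,\gamma)=-\omega(t)/V_{\theta}(Z_{t-1})$, hence
$$\partial_{\gamma}\Delta_n(\psi,\gamma,\beta)=-\frac{1}{\sqrt n}\sum_{t=1}^{n}\frac{(\beta^{\top}\omega(t))\,\omega(t)}{V_{\theta}^{2}(Z_{t-1})}\,\phi'_f[\varepsilon_t(\psi,\gamma)],$$
so that, by the mean value theorem, $B_n=R_n^{\top}(\widehat{\gamma}_{0,N(n)}-\gamma_0)$, where $R_n$ is the above gradient evaluated at $(\psi_{N(n)},\bar{\gamma}_n)$ for some random $\bar{\gamma}_n$ on the segment joining $\gamma_0$ and $\widehat{\gamma}_{0,N(n)}$. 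The next step is to prove $R_n=O_P(\sqrt n)$: using $(B_0)$ to bound $V_{\theta_{N(n)}}^{-2}\le\tau^{-2}$, the fact that $\omega(t)$ is a unit indicator vector so $\|(\beta^{\top}\omega(t))\omega(t)\|\le\max_j|\beta_j|$, the $c_\phi$-Lipschitz property of $\phi'_f$ from $(A_2)$ together with the linear growth it entails, and the consistency of $\psi_{N(n)}$ and of $\bar{\gamma}_n$, I would replace $\phi'_f[\varepsilon_t(\psi_{N(n)},\bar{\gamma}_n)]$ by $\phi'_f[\varepsilon_t(\psi_0,\gamma_0)]$ up to an average remainder which is $o_P(1)$, exactly as in the proofs of Theorem~\ref{th1} and Propositions~\ref{pr1}--\ref{pr2}. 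Applying the ergodic theorem on each stretch $[t_{j-1},t_j)$ under $(A_6)$ (the finitely many boundary vectors being negligible by Remark~\ref{rm1}(ii)), and using $(A_7)$ and the independence of $\varepsilon_t$ from $Z_{t-1}$, one gets $n^{-1}\sum_{t=1}^n(\beta^{\top}\omega(t))\omega(t)V_{\theta_{N(n)}}^{-2}(Z_{t-1})\phi'_f[\varepsilon_t(\psi_{N(n)},\bar{\gamma}_n)]\overset{P}{\longrightarrow}c$ for a finite vector $c$, i.e.\ $R_n=\sqrt n\,(c+o_P(1))=O_P(\sqrt n)$. Since $\{\widehat{\gamma}_{0,n}\}$ is asymptotically normal, $\widehat{\gamma}_{0,N(n)}-\gamma_0=O_P(N(n)^{-1/2})$, whence $|B_n|\le\|R_n\|\,\|\widehat{\gamma}_{0,N(n)}-\gamma_0\|=O_P(\sqrt{n/N(n)})=o_P(1)$ because $n/N(n)\to0$. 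Together with $A_n=o_P(1)$ this gives the claim; I would also use the identity $\widetilde{\gamma}_n=\widehat{\gamma}_{0,n}+\beta/\sqrt n$ (in probability, already noted above) so that only $\widehat{\gamma}_{0,N(n)}$, and not $\widetilde{\gamma}_{N(n)}$, needs to be controlled.

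The step I expect to be the main obstacle is the uniform bound $R_n=O_P(\sqrt n)$: one must show that the empirical average defining $n^{-1/2}R_n$ stays stochastically bounded even though it is evaluated at the random point $(\psi_{N(n)},\bar{\gamma}_n)$, with $\bar{\gamma}_n$ an intermediate value depending on the whole sample. This requires a stochastic equicontinuity / uniform law of large numbers over a shrinking neighbourhood of $(\psi_0,\gamma_0)$, obtained from the same ingredients as in the earlier proofs: the Lipschitz bound $(A_2)$ applied to $\phi'_f$, the local domination of $T_\rho$, $V_\theta$ and their derivatives by $\vartheta$ in $(B_3)$, the moment conditions $(A_7)$, $(B_1)$, $(B_4)$, and the Le Cam discretization implicit in the use of the sub-sequence $N(n)$, which decouples the estimators from the summands.
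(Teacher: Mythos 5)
Your proof is correct, and its top-level decomposition is the same as the paper's: the term $A_n$ is exactly Proposition~\ref{pr2}(i), and your $B_n$ is the content of the paper's Lemma~\ref{lm3} (applied with $\psi_{N(n)}$ in place of $\psi_n$), both arguments ultimately resting on the same scaling fact that $\widehat{\gamma}_{0,N(n)}-\gamma_0=O_P(N(n)^{-1/2})$ is paired against a $\gamma$-gradient of $\Delta_n$ of order $\sqrt{n}$, so that the product is $O_P(\sqrt{n/N(n)})=o_P(1)$. Where you genuinely diverge is in how $B_n$ is controlled. The paper expands $\Delta_n(\psi_n,\cdot,\beta)$ to second order around the full-sample estimator $\widehat{\gamma}_{0,n}$, kills the Hessian term, splits the first-order increment as $(\gamma_0-\widehat{\gamma}_{0,N(n)})+(\widehat{\gamma}_{0,N(n)}-\widehat{\gamma}_{0,n})$, proves convergence of $n^{-1/2}\partial_\gamma\Delta_n(\psi_n,\widehat{\gamma}_{0,n},\beta)$ through the $V_{1n},V_{2n}$ analysis, and then removes the remaining linear term by the tangent-space argument of Lemma~\ref{lm1}; this machinery formally invokes $\phi''_f$, which sits at the edge of assumption $(A_2)$. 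Your route is a single mean-value step around $\gamma_0$, which dispenses with the second-order expansion, the tangent-space lemma and any use of $\phi''_f$, at the price of having to control the gradient at the random intermediate point $(\psi_{N(n)},\bar{\gamma}_n)$ — the stochastic-equicontinuity issue you flag. That step is sound at the paper's own level of rigour: the Lipschitz bound on $\phi'_f$ from $(A_2)$, the lower bound $(B_0)$, the domination by $\vartheta$ in $(B_3)$ (valid on $\overline{B}(\rho_0,r_1)\times\overline{B}(\theta_0,r_2)$, into which $\psi_{N(n)}$ and $\bar{\gamma}_n$ fall with probability tending to one), the moment conditions $(A_7)$, $(B_1)$ and the piecewise ergodic theorem under $(A_5)$–$(A_6)$ give exactly the replacement of $\phi'_f[\varepsilon_t(\psi_{N(n)},\bar{\gamma}_n)]$ by $\phi'_f[\varepsilon_t(\psi_0,\gamma_0)]$ up to an $o_P(1)$ average, which yields $n^{-1/2}\|R_n\|=O_P(1)$; this is the same bounding technique the paper itself uses for its $V_{11n}$ and $V_{12n}$ terms. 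The closing remark about $\widetilde{\gamma}_n=\widehat{\gamma}_{0,n}+\beta/\sqrt{n}$ is harmless but unnecessary, since $\Delta_n(\psi,\gamma,\beta)$ depends only on $\gamma$ and $\beta$. In short: same skeleton, but a more elementary and slightly more economical treatment of the $\gamma$-replacement step than the paper's Lemma~\ref{lm2}/Lemma~\ref{lm3} route.
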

\begin{proof}
See Appendix.
\end{proof}
\medskip

%\noindent 
%
\begin{theorem} \label{th4}
Assume that $(A_1)$-$(A_7)$, $(B_0)$-$(B_4)$ hold. Let $N(n)$ be any  sub-sequence of $\lbrace 1,\ldots,n\rbrace$ satisfying $n/N(n)\longrightarrow 0$ as $n\rightarrow\infty.$ Then for any $\beta \in \mathbb R^{k+1}$,
\begin{itemize}
\item[(i)]  Under $H_0$, $\mathcal{T}_n(\psi_{N(n)},\widehat{\gamma}_{0,N(n)},\beta)\overset{D}{\longrightarrow}\mathcal{N}( 0,1),$ as $n\rightarrow\infty.$
\item[(ii)] Under $H^{(n)}_{\beta}$, at level of significance $\alpha\in(0,1)$, the asymptotic power of the test based on $ \mathcal{T}_n(\psi_{N(n)},\widehat{\gamma}_{0,N(n)},\beta)$ is $ \mathcal P_{k,t^k}=1-\Phi\left(z_{\alpha} -\varpi(\psi_0,\gamma_0, \beta)\right)$, where  $z_{\alpha}$ is the $(1-\alpha)$-quantile of a standard Gaussian distribution with cumulative distribution $\Phi$.
\item[(iii)] The test based on $ \mathcal{T}_n(\psi_{N(n)},\widehat{\gamma}_{0,N(n)},\beta)$  is locally asymptotically optimal.
\end{itemize}
\end{theorem}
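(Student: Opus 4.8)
The plan is to reduce Theorem \ref{th4} to Theorem \ref{th3} by showing that replacing the known $\gamma_0$ by the maximum-likelihood estimator $\widehat{\gamma}_{0,N(n)}$ does not alter the limiting behaviour of the test statistic. The core fact is already supplied by Proposition \ref{pr3}: under $H_0$, for every $\beta \in \mathbb R^{k+1}$,
\[
\Delta_n(\psi_0,\gamma_0,\beta)=\Delta_n(\psi_{N(n)},\widehat{\gamma}_{0,N(n)},\beta)+o_P(1).
\]
First I would combine this with part (i) of Theorem \ref{th3}, which gives $\Delta_n(\psi_{N(n)},\gamma_0,\beta)/\widehat{\varpi}_n(\psi_{N(n)},\gamma_0,\beta)\overset{D}{\longrightarrow}\mathcal N(0,1)$ under $H_0$; chaining through Proposition \ref{pr2}(i) one also has $\Delta_n(\psi_0,\gamma_0,\beta)\overset{D}{\longrightarrow}\mathcal N(0,\mu(\psi_0,\gamma_0,\beta))$. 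For the denominator, I would show $\widehat{\varpi}_n(\psi_{N(n)},\widehat{\gamma}_{0,N(n)},\beta)\overset{P}{\longrightarrow}\varpi(\psi_0,\gamma_0,\beta)$: the estimator $\widehat{\mu}_{j2}(\psi_{N(n)},\widehat{\gamma}_{0,N(n)})$ is an ergodic average of $I(f)/V_{\widehat\theta_{N(n)}}^2(Z_{t-1})$, and since $\widehat\gamma_{0,N(n)}$ only shifts the mean level within each stationarity block by an $o_P(1)$ amount while $\psi_{N(n)}$ is consistent, $(B_0)$ and the continuity/domination hypotheses in $(B_3)$ let one pass to the limit $\mu_{j2}(\psi_0,\gamma_0)$, exactly as in Proposition \ref{pr2}(ii). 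Slutsky's lemma then yields part (i): $\mathcal T_n(\psi_{N(n)},\widehat{\gamma}_{0,N(n)},\beta)\overset{D}{\longrightarrow}\mathcal N(0,1)$ under $H_0$.

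For part (ii), I would invoke the contiguity established in Corollary \ref{cr1}: $\{H_\beta^{(n)}\}$ and $\{H_0\}$ are mutually contiguous, so the $o_P(1)$ remainder in Proposition \ref{pr3} and the convergence of the denominator carry over to $H_\beta^{(n)}$. It then suffices to determine the limit law of $\Delta_n(\psi_{N(n)},\widehat{\gamma}_{0,N(n)},\beta)$ under $H_\beta^{(n)}$; by Proposition \ref{pr3} this equals that of $\Delta_n(\psi_0,\gamma_0,\beta)$, which by Le Cam's third lemma (applied to the joint Gaussian limit of $(\Delta_n,\Lambda_n)$ as in the proof of Corollary \ref{cr1}, now with the nuisance parameters, cf.\ Proposition \ref{pr1}) is $\mathcal N(\mu(\psi_0,\gamma_0,\beta),\mu(\psi_0,\gamma_0,\beta))$. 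Hence under $H_\beta^{(n)}$, $\mathcal T_n(\psi_{N(n)},\widehat{\gamma}_{0,N(n)},\beta)\overset{D}{\longrightarrow}\mathcal N(\varpi(\psi_0,\gamma_0,\beta),1)$, and the asymptotic power at level $\alpha$ is
\[
\lim_{n\to\infty}\mathbb P_{H_\beta^{(n)}}\bigl(\mathcal T_n>z_\alpha\bigr)=1-\Phi\bigl(z_\alpha-\varpi(\psi_0,\gamma_0,\beta)\bigr),
\]
which is $\mathcal P_{k,t^k}$.

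For part (iii), the LAN structure established for the full (nuisance-parameter) model — the analogue of Theorem \ref{th1}, whose proof underlies Propositions \ref{pr1}--\ref{pr3} — identifies $\Delta_n$ (suitably standardized) as the efficient central sequence for the one-dimensional sub-experiment in the direction $\beta$, the component along $\gamma$ being asymptotically orthogonal to the nuisance score by the centering conditions $\int\Omega(x)f(x)\,dx=\mathbf 0$ and $\int\Omega(x)\phi_f(x)f(x)\,dx$ in $(B_4)$. Consequently the test based on $\mathcal T_n(\psi_{N(n)},\widehat{\gamma}_{0,N(n)},\beta)$ attains, by the asymptotic Neyman--Pearson (Le Cam) bound, the maximal asymptotic power $1-\Phi(z_\alpha-\varpi(\psi_0,\gamma_0,\beta))$ among all asymptotically level-$\alpha$ tests in this direction; this is precisely local asymptotic optimality. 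The main obstacle I anticipate is the justification, under $H_\beta^{(n)}$ rather than $H_0$, that substituting $\widehat{\gamma}_{0,N(n)}$ (estimated on the sub-sample of size $N(n)$ with $n/N(n)\to 0$) for $\gamma_0$ still leaves an $o_P(1)$ remainder: one must check that the bias introduced into $\varepsilon_t(\psi_{N(n)},\widehat{\gamma}_{0,N(n)})$ by the local shift $\beta/\sqrt n$, propagated through the Lipschitz derivative $\phi_f'$ (assumption $(A_2)$) and the domination $(B_3)$, is genuinely negligible uniformly over the blocks $[t_{j-1},t_j)$ — but this is handled by contiguity together with the corresponding control already carried out under $H_0$ in Proposition \ref{pr3}, so no new device beyond Le Cam's first and third lemmas should be required.
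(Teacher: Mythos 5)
Your proposal is correct and follows essentially the same route as the paper, which disposes of Theorem \ref{th4} in one line as a direct consequence of Proposition \ref{pr3} and Theorem \ref{th3}; your write-up simply makes explicit the steps (asymptotic equivalence of the central statistics, consistency of the denominator, transfer to $H^{(n)}_{\beta}$ by contiguity and Le Cam's third lemma, and optimality from the LAN structure) that the paper leaves implicit.
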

\begin{proof}
This theorem is a straightforward consequence of Proposition \ref{pr3} and Theorem \ref{th3}.
\end{proof}
\section{Practical considerations}
\noindent 
In this section, we apply our theoretical results on simulated data, using the software R. We focus on the study of the power as a function of $\beta$ when the break locations are fixed, and as a function of the breaks locations when the associated $\beta$ is estimated. The breaks are estimated by the strategies described in subsection 2.3. 
%The work is done for the cases where $\gamma_0$ is known and when it is estimated. 
The results we present in the sequel  are obtained for the nominal level $\alpha=5\%$. We do not present those obtained  for $\alpha=1\%, 10\%$ as they are very similar. Almost all the estimators in this section are computed from 5000 replications.
\subsection{Power study for given breaks locations}
We start with the study of the theoretical local power of our test for known breaks. That is, we consider the testing problem stated in Section 2 when the $t_j$'s are assumed to be given. Thus, the power of the test is a function of $\beta$ only. In this subsection, we study the behavior of this function for several models from the more general following one:
\begin{eqnarray} \label{sim}
X_t &=& \left(\rho_1+\rho_2X_{t-1}e^{-\rho_3 X_{t-1}^2}\right)X_{t-1}+\left(\gamma_0+ \dfrac{\beta}{\sqrt{n}} \right)^{\top} \omega(t) 
%\notag \\
%&&
+\left(\theta_1 + \theta_2 X_{t-1}^2 e^{-\theta_3 X_{t-1}^2} \right)^{\frac{1}{2}}\varepsilon_t,  t\in\mathbb Z
\end{eqnarray}
where the $\rho_j$'s, $\theta_j$'s and $\gamma_0$ are parameters to be specified in each particular model considered, $n$ is the sample size, $(\varepsilon_t)_{t\in\mathbb Z}$ is a standard white noise with a differentiable density $f$, and $\beta \in [-10,10]^{k+1}$, with $k$ standing for the number of breaks.
\begin{center}
\begin{figure}[h!]
   \begin{subfigure}[b]{0.5\linewidth}
      \centering \includegraphics[scale=0.3]{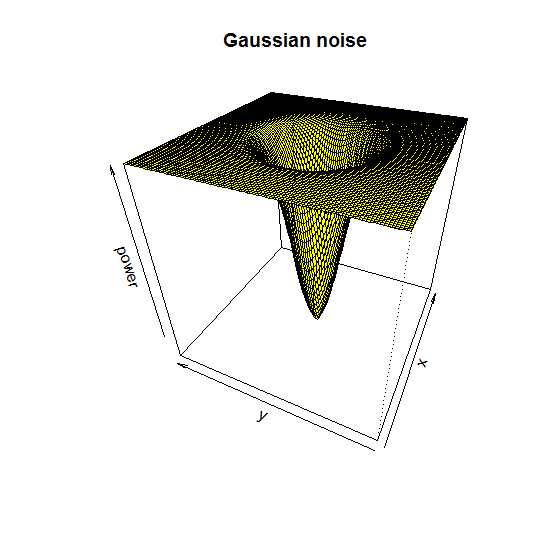}
      \caption{ \large $\rho_1=0.5,$ $\rho_2=0,$ $\theta_1=1,$ $\theta_2=0$}
   \end{subfigure}\hfill
   \begin{subfigure}[b]{0.5\linewidth}   
      \centering \includegraphics[scale=0.3]{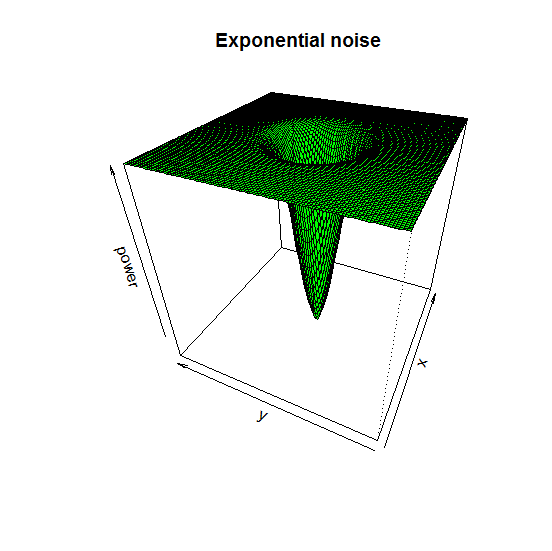}
      \caption{\large  $\rho_1=0.5$, $\rho_2=0,$ $\theta_1=1,$ $\theta_2=0$}
   \end{subfigure}\\
   \begin{subfigure}[b]{0.5\linewidth}
      \centering \includegraphics[scale=0.3]{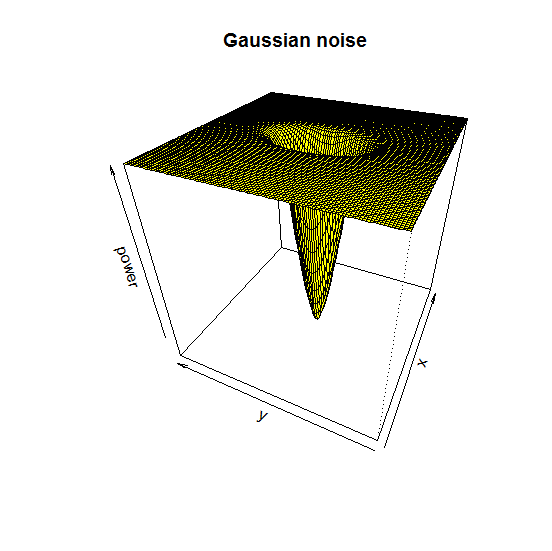}
      \caption{\large $\rho_1=0.5,$ $\rho_2=0.2,$ $\rho_3=50,$ \\
$\theta_1=0.1,$ $\theta_2=0.0025,$ $\theta_3=1$}
      \end{subfigure} \hfill
      \begin{subfigure}[b]{0.5\linewidth}
      \centering \includegraphics[scale=0.3]{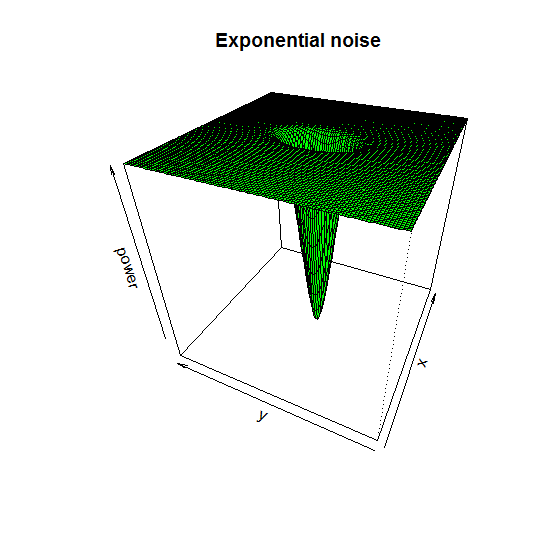}
      \caption{\large $\rho_1=0.5,$ $\rho_2=0.2,$ $\rho_3=50,$ \\ $\theta_1=0.1,$$ \theta_2=0.0025,$ $\theta_3=1$}
      \end{subfigure}\hfill
\caption[]{Power of the test for standard Gaussian and standardized exponential noises.}
\label{f2}
\end{figure}
\end{center}
\subsubsection{The parameter $\gamma_0=0$}
Here we consider $\gamma_0=0$ and we test no break against one single break, and against two breaks.
\medskip 

\noindent
$\bullet$ Case $k=1$
\smallskip 

\noindent
We illustrate our results with the model (\ref{sim}) for $\rho_1 = 0.5$, $\rho_2=0$, $\theta_1= 1, \theta_2=0 $, $ n = 60 $, $t_1 = 30$ and $t_2 = 60$.
Figure \ref{f2} (a) and Figure \ref{f2} (b) present the power of the test as a function of $\beta \in[-10,10]^2$,  for $f$ respectively the standard normal density, and the standardized exponential density with rate parameter 1.25. Note that in the latter situation, the 
$\varepsilon_t$'s are obtained by taking $ \varepsilon_t = \lambda (\widetilde \epsilon_t- 1/\lambda) $ with $\widetilde \epsilon_t$ being an exponential random variable with parameter $ \lambda$. 
It can be seen that on these two graphics, the power tends to 1 and the power corresponding to the exponential noise tends faster to 1 than that of the Gaussian noise. 
Other trials involving the above model with various exponential density function gave very similar results. 
\medskip

We next consider (\ref{sim}) for $\rho_1 = 0.5, \rho_2= 0.2, \rho_3= 50$, $\theta_1=0.1, \theta_2= 0.0025, \theta_3=1$, $ n = 60 $, $t_1=30$ and $t_2= 60$.
Figure \ref{f2} (c) and Figure \ref{f2} (d) show the power, respectively for standard normal density $f$ and standardized exponential density $f$ with parameter 1.25. It can be seen that in either case, it increases  with  $\beta$ to 1. Here also, other trials involving the above model with various exponential density function gave very similar results. 
\medskip 

\noindent
$\bullet$ Case $k=2$
\smallskip 

\noindent
For the case of two breaks, we took $ n = 100,$ $ t_1=30$, $t_2=60$  and $ t_3= 100$, $\rho_1=0.5, \rho_2=0$, $\theta_1=1, \theta_2=0$. 
 The results for the power corresponding to the standard Gaussian $f$ ($\mathcal N(0,1)$) and the standardized exponential $f$  ($\mathcal E (0,1)$) are displayed in Table \ref{tab:1} below. It is seen that the power moves toward 1 when $\beta$ moves away from $(0,0,0)^{\top}$.
%
%
%\begin{center}
\medskip

\noindent
$\bullet$ 
We also studied the case of 3 breaks. The results are not reported as they 
were very similar to the case of 2 breaks. 
\subsubsection{The parameter $\gamma_0$ and $f$ are unknown}
\noindent In practice $\gamma_0$ and $ f $ may be unknown and may need to be estimated.
As the theory is more complex in this case we have not tried to tackle it. But we have done some trials in order to have an idea on how the test may behave in this situation.
 While we estimated $\gamma_0$ by a maximum likelihood method, $f$ was estimated by the well-known kernel (or the Parzen-Rosenblatt) estimator $\widehat{f}_n$ defined by
$$\widehat{f}_n(x)=\dfrac{1}{nh_n}\sum_{t=1}^n{K\left( \dfrac{x-\widehat{\varepsilon}_t}{h_n} \right),\quad x\in\mathbb R,
}$$
where 
%for $\gamma\in\mathbb R^{k+1}$,
$$
 \widehat{\varepsilon}_t=\dfrac{X_t-T_{\rho_n}(Z_{t-1})-\widehat{\gamma}_n^{\top}\omega(t)}{V_{\theta_n}(Z_{t-1})},$$
with  $\psi_n=(\rho_n,\theta_n)$ and $\widehat{\gamma}_n$ being the maximum likelihood estimates of the nuisance parameter $\psi_0=(\rho_0,\theta_0)$ and $\gamma_0$ respectively, 
 $h_n=n^{-1/5}$ the bandwidth and $K$ the Gaussian kernel function.
% Here, the latter was the Gaussian kernel and $h_n=n^{-1/5}$.
% satisfying:
%\begin{itemize}
%\item[$(i)$] $K(x)\geq 0$, $x\in\mathbb R$
%\item[$(ii)$] $\displaystyle\int{K(x)}dx=1$ 
%\item[$(iii)$] $\displaystyle\int{xK(x)}dx=0$.
%\end{itemize}
%

The power of the test was computed based on these estimates which are themselves based on observations  from (\ref{sim}) for $\rho_1= 0.5, \rho_2=0$ and $\theta_1 = 1, \theta_2=0 $, and the standard Gaussian $f$ and standardized exponential $f$ for 3 different rate parameters.
%
%We studied the case of single break and we took $ n = 60 $, $t_1=30$ and $t_2=60$. 
%The chronogram of the data is presented on Figure 1 (b). 
The graphics of the corresponding powers are not presented as they were very similar to those on Figure \ref{f2}. 

\begin{table}
% table caption is above the table
%
%\footnotesize{
\scalebox{0.7}{\begin{tabular}{|c|cccccccccccc|}
\hline
$\beta$ & $\begin{pmatrix} 
0.2\\
 0.25\\
 0.5\end{pmatrix}$ & $\begin{pmatrix} 
0.5\\
 0.75\\
 1\end{pmatrix}$& $\begin{pmatrix} 
0.75\\
 1\\
 1.5\end{pmatrix}$ & $\begin{pmatrix} 
0.75\\
 1.2\\
 2\end{pmatrix}$ & $\begin{pmatrix} 
2.2\\
 1.5\\
 2.2\end{pmatrix}$ & $\begin{pmatrix} 
2.5\\
 2\\
 3\end{pmatrix}$ & $\begin{pmatrix} 
2.75\\
 2.2\\
 3.2\end{pmatrix}$ & $\begin{pmatrix} 
3\\
 2.5\\
 3.8\end{pmatrix}$ & $\begin{pmatrix} 
3.5\\
 3\\
 4\end{pmatrix}$ & $\begin{pmatrix} 
3.5\\
 3\\
 0.75\end{pmatrix}$& $\begin{pmatrix} 
4\\
 3.5\\
 4\end{pmatrix}$ & $\begin{pmatrix} 
4.7\\
 4\\
 3\end{pmatrix}$ \tabularnewline
\hline
$\mathcal N(0,1)$ & 0.119 & 0.271  &0.447 & 0.607&   0.831  &0.955&    0.975 &  0.994    &0.999  &  0.953   & 0.999&  1.000

 \tabularnewline

$\mathcal E (0,1)$ & 0.144 &  0.363&   0.596&   0.773 &  0.946   &0.994 &  0.998  & 0.999&    0.999 &   0.994 &   1.000& 1.000\\
\hline
\end{tabular}}
%}
\caption{Power against an AR(1) model with 2 breaks}
\label{tab:1} 
%\bigskip
\end{table}
\subsection{Change-points detection and their locations estimation}
In this subsection, we investigate change-points detection and their locations estimation 
 in simulated data, and in three series of real data. 
%For simulated data, 
%
%
%we compare our methods to those  studied in \cite{laj3}, \cite{fryz}, \cite{bara} and \cite{cho}. 
%
\medskip

\noindent
As our methods are based on change magnitudes estimates, change location estimates may not be accurate on one sample. Then it can be interesting to consider the mean of these estimates based on a large number of samples. 
Recalling that the first step of our methods is to get potential information from the chronogram of the series at hand, a problem arises as the samples may have different potential change-points, and different number of change location estimates.
%Given 
%Recall that the first step of our methods is to get potential information from the chronogram of the series at hand. The change-points considered in the simulations will naturally be those exhibited by the chronogram. But breaks with very small magnitude did not always appear on the graphics.
% was difficult to apply S2. 
%
To overcome this difficulty, in the first phase of our investigations, we assumed the areas of the changes to be known (not changes themselves). In the second phase, we did no more make this assumption.
% and only S3 was applied. 
%In both phases, we compared our methods CURRENT-S2 and CURRENT-S3 with those studied in \cite{laj3}, \cite{fryz}, \cite{bara} and \cite{cho}.

%
%
\subsubsection{Simulated data}
The data $X_1, \ldots, X_n$, $n=200$ were simulated from model (\ref{mod}) for $V(x)=1$ and either $T(x)=0$ or $T(x)= 0.5 x$, and for several $\gamma_0$, $\beta$ and $t_j, \ j=1, \ldots, k$. In either case, the noise was Gaussian. In this phase, the areas of changes were assumed to be known and the change-point locations were estimated on the basis of 5 000 replications. We took $\zeta=0.01$, $m=20$ and $h=20$.
%
%
% 
% In the case of data containing change with very weak amplitude, it was not easy to make use of the graphics. So we assumed them known as well as their locations and we applied our methods.
%We restrict our study to the case where the number of changes is known and used the strategies S1 or S2.
%
%We call {\it first-set of data} that issued from the shifted white noise model and {\it second-set of data} the one issued from the AR(1) model.
%

We first studied the situation where the data present no break, and we tested for no break against the existence of a break. The $n=200$ observations were simulated for $T(x)=0, \ V(x)=1$ and $\gamma_0=\beta=0$. Thus, they were stationary and contained no break. We simulated four sets of $n=200$ observations. For each of them, we applied the strategy S1 with assumed one single break location at $t_1$, for $t_1=30, \ldots, 170$. As can be seen from Figure \ref{f3} (a), the curve of the power function, as a function of $t_1$, is {\it flat}. Its values are comprised between 5\% and 5.3\%, which shows that our method did not detect any significant change in the data got from models with no break. 

We next considered the case the data contained one single weak break-point 
 and we tested for no break against one single weak break. 
In this case, we used Part (ii) of Remark 4.
The data were computed from (\ref{mod}) for $V(x)=1$ and $T(x)= 0.5 x $, $\gamma_0=(0,0)$ and $\beta=(0,\beta_2)$, for $\beta_2= -0.75$ $,-0.6,$ $-0.5,$ $0.4,$ $0.5,$ $0.8$, respectively combined with the respective breaks locations $t_1=60, 90, 120$ and $160$.  In any case, we applied the strategy S2. Figure \ref{f3} (b) shows that the power is significantly larger than the nominal level 5 \% and, generally, reaches its maximum at the right break locations. 
%Thus, our method enables the detection of the change and the correct estimation of its location in the data obtained from models with a single break. 

Another case studied is that where there were 
%from the chronogram, one has suspected the existence of 
 two weak break-points in the data. 
%That is, from the chronogram, one has suspected the existence of two weak breaks. 
The test was for no break against two weak breaks. We assumed known the areas of the breaks. 
The data were computed from (\ref{mod}) for $V(x)=1$ and either $T(x)= 0.5 x, \ T(x)=0$, $V(x)=1$, $\gamma_0=(0,0,0)$ and $\beta$$=(0,\beta_2,\beta_3)$ for $(\beta_2, \beta_3)=(0.5,-0.5),$ $ (0.4,0.4),$ $ (-0.3,0.3),$  $(0.75,-0.75),$ $ (-0.2,0.2)$ and $(0.8,-0.8)$, respectively combined with the respective couples of breaks locations $(t_1,t_2)=(40, 120)$ and $(50,140)$.  We applied the strategy S2 with $\mathcal S_2$$=$$\{30, \ldots, 50 \}$$ \times$$ \{110, \ldots, 130 \}$ for $(t_1,t_2)$=$(40, 120)$ and $\mathcal S_2$$=$$ \{40, \ldots, 60 \}$$\times$$ \{130, \ldots, 150 \}$ for $(t_1,t_2)$=$(50, 140)$.
From Figure \ref{f3} (c) and Figure \ref{f3} (d), it can be seen that  the power of the test deviates from the nominal level and generally reaches its maximum at the right couples of the break locations listed above. Table 2 gives more extensive results. 
As can be seen there, our methods generally detect changes and their correct locations in the simulated data. 
%However, for consecutive changes with magnitudes of the same sign, from the results that we do not present, the strategy S2 usually failed to estimate the right locations.
%
%We also considered the study of false alarm detection. The data are from the same models as above with the observation located at a time indic $t_1$ having a different mean than the others. The false alarm locations considered are $t_1= 60,$ $80,$ $100$ and $120$. Figure 4 (e)-(h) show abrupt jumps in the power at the locations of the false alarms considered.
%
% Note that the vertical line on some of the graphics in Figure 3 indicates the time indic at which the power function of the test reaches its maximum. 
%\bigskip

\begin{table}
\begin{center}
%{\renewcommand{\arraystretch}{1.5}
%{\setlength{\tabcolsep}{0.1cm}
% table caption is above the table
%
% For LaTeX tables use
\scalebox{.7}{\begin{tabular}{|lr|ccccccc|}
\hline
%\noalign{\smallskip}
%
&{$\beta$}& $\begin{pmatrix}
0\\
0.2\\
-0.2\end{pmatrix}$&  $\begin{pmatrix}
0\\
-0.4\\
0.4\end{pmatrix}$&  $\begin{pmatrix}
0\\
-0.5\\
0.5\end{pmatrix}$&     $\begin{pmatrix}
0\\
0.7\\
-0.7\end{pmatrix}$&   $\begin{pmatrix}
0\\
0.9\\
-0.9\end{pmatrix}$ &   $\begin{pmatrix}
0\\
1\\
-1\end{pmatrix}$ &    $\begin{pmatrix}
0\\
1.3\\
-1.3\end{pmatrix}$      \\ 
%\noalign{\smallskip}\hline\noalign{\smallskip}
$(t_1,t_2)$ &&&&&&&& \\ \hline
( 60,100)& &(59,100) &(70,100) & (60,100) & (65,100)& (60,100) & (60,100)& (60,100)\\ 
(40,120) && (38,120) & (40,120)& (40,120) & (40,120) & (40,120) & (40,120) &(40,120)\\ 
(50,140)&&(45,140) & (50,140) & (50,140) &(50,138)& (50,141)& (50,140) & (50,140)\\ 
(80,160)&&(80,160) &  (80,160) & (80,159)& (80,159)& (80,160) & (80,160) &(80,160)\\ 
(100,170)& &(99,170)& (100,170) & (100,170) & (101,170) & (100,169)& (100,170)& (100,170)\\ \hline
\end{tabular}}
%\nonumber
\caption{Location estimates on the basis of 5000 replications using S2, for 2 simulated breaks.}
\label{tab:2} % Give a unique label
%\bigskip
%
\end{center}
\end{table}

We compared our methods denoted CURRENT-S2 and CURRENT-S3 with 
those studied in \cite{laj3}, denoted here by SCUSUM (Standard CUSUM) and RCUSUM (R\'enyi CUSUM), and with WBS, NOT and ID studied respectively in \cite{fryz}, \cite{bara} and \cite{cho}. 
For performing the SCUSUM and the RCUSUM, we used the R routine {\it CUSUM.test} from the library {\it CPAT}. For WBS we used the routine {\it wbs} from the library {\it wbs}. For NOT we used the function {\it not} from the library {\it not}. For ID, the routine used was {\it ID}, from the library {\it IDetect}.

We started with series simulated with no change. Here, while our strategy S1 never found any change in all the generated series, SCUSUM, RCUSUM, WBS and sometimes NOT each found a change in almost all of these series. ID even found more than one change in some of them.

For the simulated data with one single change, the location estimates obtained with our methods using Part (ii) of Remark 4, were generally more accurate (see Table \ref{tab:3}). SCUSUM and RCUSUM were able to provide reasonable estimates only for changes with large magnitude. The RCUSUM, performed in \cite{laj3} to estimate change-point locations in the earliest and the latest observations, is beaten by our methods in estimating such locations on the example studied.  WBS, NOT and ID generally produced sequences of change-point locations some of which containing values close to the correct locations. 
Their corresponding results in Table \ref{tab:3} are based on one single set of simulated data while those of CURRENT-S2, CURRENT-S3, SCUSUM and RCUSUM are based on 5000 replications. Indeed, as WBS, NOT and ID sometimes gave more than one point, there was no way to evaluate their mean as their number were different from one replication to another.

We finally consider the situation where neither the number nor the 
areas of the changes are assumed to be known. These were obtained from the chronograms. We compared our methods with SCUSUM, RCUSUM, WBS, NOT and ID for the case of one single change. In the case of two changes the comparison is with WBS, NOT and ID. As can be sen from Table \ref{tab:4}, in the case of one break, for small magnitudes ($\beta_2< 15$), the estimates from our methods are globally more close to the true locations than the others, even for end-points-samples. The same can be said for larger magnitudes ($\beta >15$). For two breaks, As can be sen from Table \ref{tab:5}, again for weak magnitudes, although they sometimes under/over estimate the number of breaks, they are generally able to find at least one break, while the other methods fell to find anyone. For significant magnitudes our methods generally still be better.

\begin{table}
\begin{center}
\scalebox{.7}{\begin{tabular}{|c|cccc|}\hline
True break & 30 & 60 &120& 170\tabularnewline
\hline
$\beta=(0,\beta_2)$ & $(0,-0.5)$ & $(0,0.4)$& $(0,0.5)$ & $(0,-0.75)$
 \tabularnewline
\hline
CURRENT-S2 & 30 &  60 & 120&170  \tabularnewline
%\hline
SCUSUM & 99 &  99 &100 & 100 \tabularnewline
%\hline
RCUSUM & 91 &  60 & 143  &126\tabularnewline
%\hline
 WBS & 71;76 & 17;55;72;124;170&-&105;111;138;143 \\
%\hline  
 NOT & 71;76 & 17;55;72;114;124&-&105;111;138;143 \\
%\hline  
ID & 30;58;76;107;110 &62;72;114;2124;140 &96;101;106;122;129 &105;111;138;143 \\
\hline
$\beta=(0,\beta_2)$ & $(0,-5)$ & $(0,4)$& $(0,5)$ & $(0,-2)$
 \tabularnewline
\hline
CURRENT-S2  & 30 &  60 &  120 & 171  \tabularnewline
%\hline
SCUSUM & 76 &  81 &  113&105 \tabularnewline
%\hline
RCUSUM & 29 &  63 &80 & 44\tabularnewline
%\hline 
 WBS & 31;44 &71 &123 &113 \\
%\hline  
 NOT & 31;44;175;182 &71 &32;84;100;112;158 &113 \\
%\hline  
ID &31;47;50;58;63;198  &36;38;95;99;113 &111;123;142;159 &95;106;113;139;147 \\
\hline
$\beta=(0,\beta_2)$ & $(0,-15)$ & $(0,14)$& $(0,15)$ & $(0,-12)$
 \tabularnewline
\hline
CURRENT-S2  & 30 &  60 &  120 &170  \tabularnewline
%\hline
SCUSUM & 43 &  64 &119 & 153 \tabularnewline
%\hline
RCUSUM & 35 &  77 &  121 &150\tabularnewline
%\hline 
WBS & 31 &68;98;128;149;190 &39;122 &10;61 \\
%\hline  
 NOT & 32;60;63;138;144 &52;86;98;128;188 &27;39;120 &10;61 \\
%\hline  
ID &32;37;63;96;144;179  & 45;52;86;98;128& 112;115;123;166 &103;124;145;186;195 \\
\hline
$(0,\beta_2)$ & $(0,-25)$ & $(0,24)$& $(0,25)$ & $(0,-22)$
 \tabularnewline
\hline
CURRENT-S2 & 30 &  60 &  120 &170  \tabularnewline
%\hline
SCUSUM & 36 &  62 &  120 &163 \tabularnewline
%\hline
RCUSUM& 38 &  61& 121 &166\tabularnewline
%\hline 
WBS &32  &64;129;164 &32;92;110;172;180 &171;191 \\
%\hline  
NOT & 14;30;41;122 &64.129;164&92;120;126;148;158 &171;191 \\
%\hline  
ID & 9;30;52;108;111;198 &53;68;124;129;188 &108;120;126;148;190 &100;122;150;178;190 \\
\hline
$\beta=(0,\beta_2)$ & $(0,-35)$ & $(0,34)$& $(0,35)$ & $(0,-32)$
 \tabularnewline
\hline
CURRENT-S2  & 30 &  60&  120 &170  \tabularnewline
%\hline
SCUSUM& 34 &  61 &  120 & 166 \tabularnewline
%\hline
RCUSUM & 30 &  60 & 121 &170\\
% \hline 
WBS &31;90;127  &60;80 & 121;166;173&26;172 \\
%\hline  
 NOT &30;90;127  &60;80 &65;93;121;172;196 &26;159;172 \\
%\hline  
ID& 30;85;128;158;177 & 21;60;125;156;183 &59;67;76;150;188 &85;140;163;170;173 \\
\hline
\end{tabular}}
\bigskip

%\caption{$n=200,\rho_1=0.5,\rho_2=0,\theta_1=1$ et $\theta_0=0.$}
\caption{Change locations estimates obtained by CURRENT-S2, SCUSUM and RCUSUM on the basis of 5000 replications, and by WBS, NOT and ID on the basis of one replication for samples with known break areas.}
\label{tab:3}
%\bigskip
%
\end{center}
\end{table}
\begin{table}
\begin{center}
\scalebox{.7}{\begin{tabular}{|c|cccc|}\hline
True break & 30 & 60 &120& 170\tabularnewline
\hline
$\beta=(0,\beta_2)$ & $(0,-0.5)$ & $(0,0.4)$& $(0,0.5)$ & $(0,-0.75)$
 \tabularnewline
\hline
CURRENT-S2 &20,163 & 54 & 134  & 171 \tabularnewline
%\hline
CURRENT-S3 &26,46 & 65  & 116,146 & 65, 175 \tabularnewline
%\hline
SCUSUM & 67 &  66&  151   &75 \tabularnewline
%\hline
RCUSUM & 67 &  66  &  151   &78 \tabularnewline
%\hline
 WBS &25&  --- &151&---\\
%\hline  
 NOT & 25 &  --- &  151&  ---\\
%\hline  
ID & 25 & 97,104&  2,16,151&---\\
\hline
$\beta=(0,\beta_2)$ & $(0,-5)$ & $(0,4)$& $(0,5)$ & $(0,-2)$
 \tabularnewline
\hline
CURRENT-S2 & 31 &  52 &  121 &166  \tabularnewline
%\hline
CURRENT-S3 & 28,48,186 & 57 &  116 &47,127,171 \tabularnewline
%\hline
SCUSUM & 42 &  70&   121    &62\tabularnewline
%\hline
RCUSUM & 40 &  74 &   122 &62\tabularnewline
% \hline 
  WBS & ---&  114 &  ---   &--- \\
%\hline  
 NOT & ---&  114 &  ---   &---\\
%\hline  
ID& 46,187 &  ---&   108  &---\\
\hline
$\beta=(0,\beta_2)$ & $(0,-15)$ & $(0,14)$& $(0,15)$ & $(0,-12)$
 \tabularnewline
\hline
CURRENT-S2 & 37 &  61  &  123    &174 \tabularnewline
%\hline
CURRENT-S3 & 27 &  62 &  117    &166  \tabularnewline
%\hline
SCUSUM & 23 &  64  &  117   &165 \tabularnewline
%\hline
RCUSUM & 33 &  64 &  117 &173 \tabularnewline
%\hline 
WBS & 23 &  61 &  118&173  \\
%\hline  
 NOT & 23 &  61 &  118&173 \\
%\hline  
ID & 23 & 32,43,61,187&117,191& 148,173\\
\hline
$(0,\beta_2)$ & $(0,-25)$ & $(0,24)$& $(0,25)$ & $(0,-22)$
 \tabularnewline
\hline
CURRENT-S2 & 34,94 &  62 &  128  &170 \tabularnewline
%\hline
CURRENT-S3 & 30,46,96 &  56 &  122  &30,168 \tabularnewline
%\hline
SCUSUM & 40 &  67 &  120   &167 \tabularnewline
%\hline
RCUSUM & 30 &  67 & 120 &170 \tabularnewline
%\hline 
  WBS & 30&  63 &    120 &170 \\
%\hline  
 NOT & 30 &  63 &   120 &170\\
%\hline  
ID & 2,11,30&  63 &  39,40,120,128  &171 \\
\hline
$\beta=(0,\beta_2)$ & $(0,-35)$ & $(0,34)$& $(0,35)$ & $(0,-32)$
 \tabularnewline
\hline
CURRENT-S2 & 31 &  60 &  120   &166 \tabularnewline
%\hline
CURRENT-S3 & 30 &  60 &  116   &166 \tabularnewline
%\hline
SCUSUM & 31 &  60  &  120    &160 \tabularnewline
%\hline
RCUSUM & 31 &  60  & 120   &168\\
% \hline 
  WBS & 31&  60 &    120 &170\\
%\hline  
 NOT & 31 &  60&    120 &170 \\
%\hline  
ID & 31&  60,120,180,187 & 120 &172\\
\hline
\end{tabular}}
\bigskip

%\caption{$n=200,\rho_1=0.5,\rho_2=0,\theta_1=1$ et $\theta_0=0.$}
\caption{Change location estimates obtained by CURRENT-S2, CURRENT-S3, SCUSUM, RCUSUM, WBS, NOT and ID on the basis of one sample with one break.}
\label{tab:4}
%\bigskip
%
\end{center}
\end{table}
\begin{table}
\begin{center}
\scalebox{.7}{\begin{tabular}{|c|cccc|}\hline
True breaks & $t_1=30$, $ t_2=170$ &$t_1=30$, $t_2=100$ & $t_1=100$,    $t_2=170$ &$t_1=100$, $t_2=150$ \tabularnewline
\hline
&&$(0,\beta_2,\beta_3)=(0,0.2,0.5)$ &&
 \tabularnewline
\hline
CURRENT-S2 &20,177 & 35,103& 164  & 161\tabularnewline
%\hline
CURRENT-S3 &29,166,186 & 28,98  & 31,101,166  & 27,96,150 \tabularnewline
%\hline
 WBS & --- &  --- &  --- &  ---\\
%\hline  
 NOT & --- &  --- &  --- &  ---\\
%\hline  
ID & 28 & ---&  48,71&118,121\\
\hline
&&$(0,\beta_2,\beta_3)=(0,0.2, -0.3)$ &&
 \tabularnewline
\hline
CURRENT-S2 &176 & 21,85 &  179 &  114,135\tabularnewline
%\hline
CURRENT-S3 &  27,167,187&  28,98 &  31,98,176 &101,151 \tabularnewline
% \hline 
  WBS & ---&  114 &  ---   &--- \\
%\hline  
 NOT & ---&  114 &  ---   &---\\
%\hline  
ID  & --- &  --- & 54,61,81 &161,180 \\
\hline
&&$(0,\beta_2, \beta_3)=(0,15,-13)$ &&
 \tabularnewline
\hline
CURRENT-S2 &  30,167 &  23,85  &  170 &113,148\tabularnewline
%\hline
CURRENT-S3 & 33,170 &  33,96  &  102,166 &105,164 \tabularnewline
%\hline 
  WBS & 33,170 &  30,100 &  106,170&99,151  \\
%\hline  
 NOT & 33,170 &  30,100 &  106,170&99,151 \\
%\hline  
ID &33,61,170 &  27,100,191 &100,170& 35,90,151 \\
\hline
&&$(0,\beta_2, \beta_3)=(0,14,-3)$ &&
 \tabularnewline
\hline
CURRENT-S2 & 20,180 &  35,86&  90,180  &103,141 \tabularnewline
%\hline
CURRENT-S3 &  27,166,186 &  27,96&  46,98,166  &46,96,149 \tabularnewline
%\hline 
  WBS &36,169 &  26,98 &   99,170 &103,150\\
%\hline  
 NOT &36,169 &  26,98 &   99,170 &103,150\\
%\hline  
ID & 36,169,190 &  26,98 &  57,99,154,170  &45,103,141\\
\hline
\end{tabular}}
\bigskip

%\caption{$n=200,\rho_1=0.5,\rho_2=0,\theta_1=1$ et $\theta_0=0.$}
\caption{Change location estimates obtained by CURRENT-S2, CURRENT-S3, %SCUSUM, RCUSUM, 
WBS, NOT and ID on the basis of one sample with two  breaks.}
\label{tab:5}
%\bigskip
%
\end{center}
\end{table}

\subsubsection{Real data}
Now, our methods are applied to detecting changes in three time series $Q_1$, $V_3$ and $V_{15}$ relative to the floods of the Upper Hanjiang River in China, collected from 1950 to 2011. Their length is $n=62$. The first series represents the annual maxima daily discharge measured in $m^3/s$. The second represents the annual maxima 3 day flood volume and the third the annual maxima 15 day flood volume, both measured  in $m^3$.  
%Details on these series are given in \cite{xiong}. 
The chronogram of $Q_1$ in Figure \ref{f4} (a) and those of $V_3$ (black line) and $V_{15}$ (blue line) in Figure \ref{f4} (b) seem to present a little trend and no apparent seasonality. For each of them, using the R routine ma, we estimated the trend (red line) by a five-order moving average and subtracted them from their corresponding raw series. As the trend is assumed to be smooth, the eventual changes in the series are expected to be found in the residual series. More explicitly, 
letting $(T_t)$ be the estimate of the trend, and letting $(Y_t)$ be either of the raw series, we applied our methods to the residual series $X_t=Y_t-T_t$ (detrended series) and considered the model 
$$X_t=\mu_t + \sigma_t \varepsilon_t, \ 1 \le t \le n,$$
where $\mu_t$ and $\sigma_t$ are piece-wise constant functions of $t=1, \ldots, n$ and $(\varepsilon_t) $ is a standard white noise with a Gaussian distribution.

We considered the above model because the Ljung-Box and Box-Pierce tests (see \cite{bro}) applied to each of the three $(X_t)$ rejected their iid hypothesis. Hence, we considered them as heteroscedastic. Also, the Shapiro-Wilk test applied to several subsets of the residual series suggested they were piece-wise Gaussian. 
The graph of the residual associated with $Q_1$ is presented in Figure \ref{f4} (c), while those associated with the two others are on Figure \ref{f4} (d) (black line for $V_3$ and blue line for $V_{15}$). On each of them, the red line represents the trend. In any case, our historical data was the 20 first observations $X_1, \ldots, X_{20}$, we suspected changes around  1986 and 1998. As minimum distance $h$ we took $h=5$, and $\zeta=.01$. 

The strategy S2 applied with the $\mathcal S_j$'s containing 5 points gave, for $Q_1$, locations 1985 and 1996 and for $V_3$ and $V_{15}$, locations 1984 and 1997.  The strategy S3 gave locations 1986 and 1997 for the three series. 

We next applied SCUSUM and RCUSUM sequentially with the same subsets of data used in our sequential method S3. Both gave the same results. That is, for $Q_1$ and $V_3$, changes  at 1971 and 2000 and for $V_{15}$, changes at 1968 and 2000. These locations estimates are a bit far from our estimates which are close to those obtained by \cite{xiong} who found only one change located at 1987 for $Q_1$, and  
 1985 for both $V_3$ and $V_{15}$. 

We finally applied NOT, WBS and ID to these data. NOT gave 1984 for $Q_1$,1985 for $V_3$ and 1984 for $V_{15}$. WBS and ID gave the same results : 1984 for $Q_1$,1985 for $V_3$ and 1984 and 2009 for $V_{15}$. 

How to evaluate the accuracy of these estimates? The Ankan Reservoir was built in 1982. It has been proved to be among other reservoirs, the one mostly influencing the flow regime of the Upper Hanjiang River. It  started storing water in 1989. One year later, its first generator was operational and in 1992, all its four generators were operational. The entire project was complete by 1995. From these information, it is clear that the estimations obtained with SCUSUM and RCUSUM are far from the period invoked above. Those obtained with our methods as well as those from NOT, WBS and ID fall within this period. 
%the period of construction and functioning of the Ankan Reservoir. 
%With this, NOT, WBS and ID give earlier change-points than CURRENT. But from the graphics, this is somewhat doubtful.  
%Note that \cite{xiong} found only one change located at 1987 for $Q_1$, and    1985 for both $V_3$ and $V_{15}$. 

%
%Our second location estimate at 1996 or 1997 can be explained by the fact that of the 
%
%\begin{figure}
%\vspace{6pc}
%   \begin{center}
%      \includegraphics[width = 140mm,height=140mm]{break.pdf}
%  \end{center}
%\caption[]{Changes locations for no change (a)-(b), one change (c)-(f) and two changes (g)-(h).}
%\label{f3}
%\end{figure}
%
%
\begin{center}
\begin{figure}[h!]
   \begin{subfigure}[b]{0.5\textwidth}   
      \centering \includegraphics[height=5.cm, width=6.cm,scale=0.2]{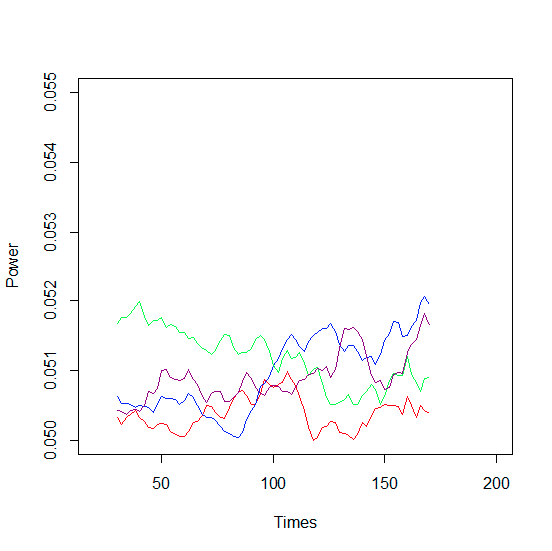}%  vous pouvez modifier les valeurs pour avoir la bonne taille 
     \caption{\large \centering AR(1) model}%\large \centering  AR(1) model with no break}
   \end{subfigure}\hfill
      \begin{subfigure}[b]{0.5\textwidth}
      \centering \includegraphics[height=5.cm, width=6.cm,scale=0.2]{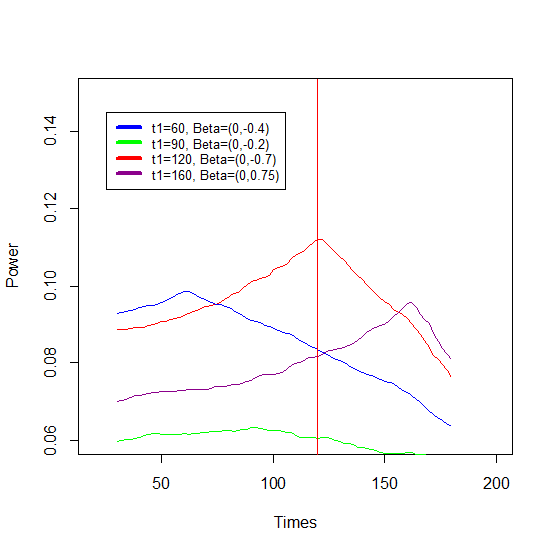}%  vous pouvez modifier les valeurs pour avoir la bonne taille
      \caption{\begin{large}
       \centering AR(1) model with one break\end{large}}
      \end{subfigure}\hfill
      \begin{subfigure}[b]{0.5\linewidth}
      \centering \includegraphics[height=6.cm, width=6.cm,scale=0.3]{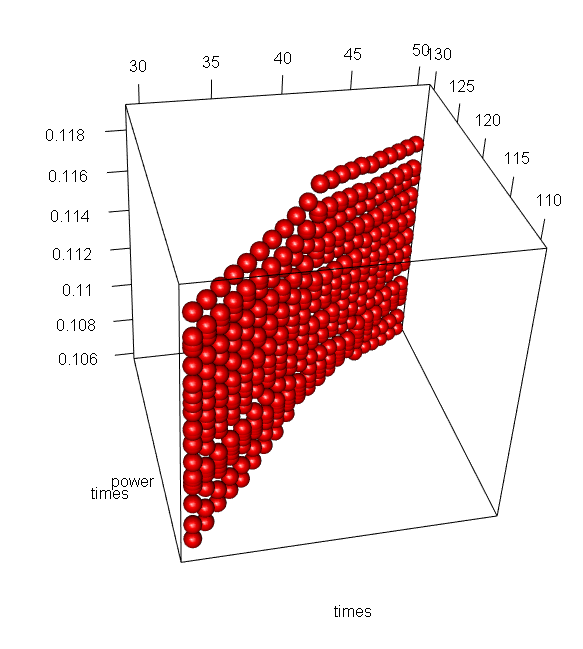}%  vous pouvez modifier les valeurs pour avoir la bonne taille
      \caption{ \large White noise model with \centering $t_1=40,t_2=120,\beta=(0,0.5,-0.5)$ }
   \end{subfigure}\hfill
   \begin{subfigure}[b]{0.5\textwidth}   
      \centering \includegraphics[height=6.cm, width=6.cm,scale=0.3]{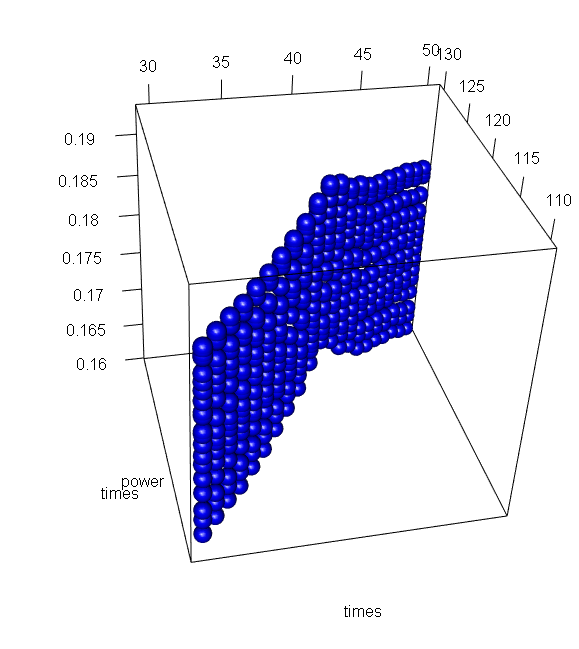}%  vous pouvez modifier les valeurs pour avoir la bonne taille
      \caption{\large AR(1) model with \centering $t_1=40,t_2=120,\beta=(0,0.4,-0.4)$  }
   \end{subfigure}\hfill
\caption[]{Change-point locations for no change (a), one change (b) and two changes (c)-(d).}
\label{f3}
\end{figure}
\end{center}
\subsection{Concluding remarks} 
We have constructed and studied a likelihood-ratio test for weak changes detection in the mean of CHARN($p$) time series models. We have derived its power function and given its expression. We have used this power in some new strategies for detecting changes in the mean and estimating their locations. Simulation experiment has shown that our method performs well on the examples studied. Compared with some competitive methods, ours have shown a better performance in estimating the number and locations of weak changes. They have also been applied to three sets of real data studied in \cite{xiong}. Two changes have been detected in each of these data and their locations have been estimated. For each series, one of our location estimates is very close to the single one  obtained in \cite{xiong}. 

We did numerous trials from which we observed that, in almost all the cases, with strategy S1, no change was detected when the data were simulated with no change, a change was detected for data simulated with at least one change, and the location was accurately estimated for data containing one single change, even when the magnitude was too small.
%We also observed that our methods could allow for estimating accurately the locations of weak changes as well as abrupt changes with large magnitude. 

%
Our methods are particularly adapted to weak change-point detection but also to {\it local} change-point detection. The latter means change-point detection in given areas of the data under study.
In using them as screening methods, changes that may have been previously detected by some other methods can be considered as known. So that, under the null hypothesis, the data may not be stationary. This constitutes a great advantage of our methods over existing ones in which the stationarity assumption under the null hypothesis plays a key role in the derivation of the theoretical results. 
%
%
%\begin{figure}
%   \begin{center}
%      \includegraphics[width = 170mm,height=190mm]{realdat.pdf}
%  \end{center}
%\caption[]{Changes locations for real data.}
%\label{f4}
%\end{figure}
%
\begin{center}
\begin{figure}[h!]
   \begin{subfigure}[b]{0.4\linewidth}
      \centering \includegraphics[scale=0.17]{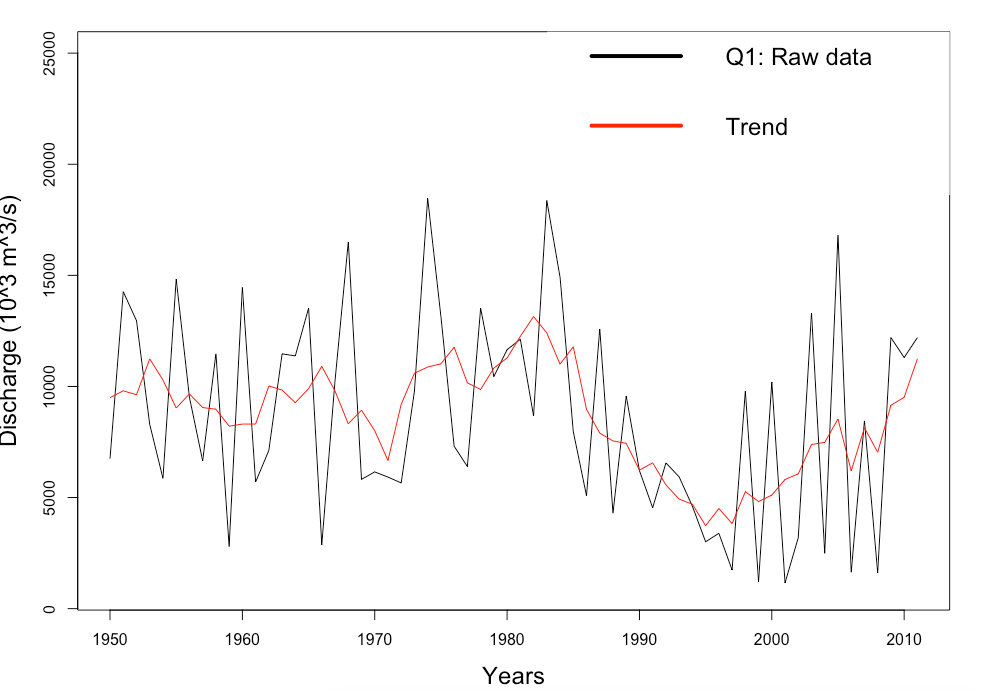}%  vous pouvez modifier la valeur pour avoir la bonne taille 
      \caption{\centering $Q_1$: Raw data}
   \end{subfigure}\hfill
   \begin{subfigure}[b]{0.4\textwidth}   
      \centering \includegraphics[scale=0.17]{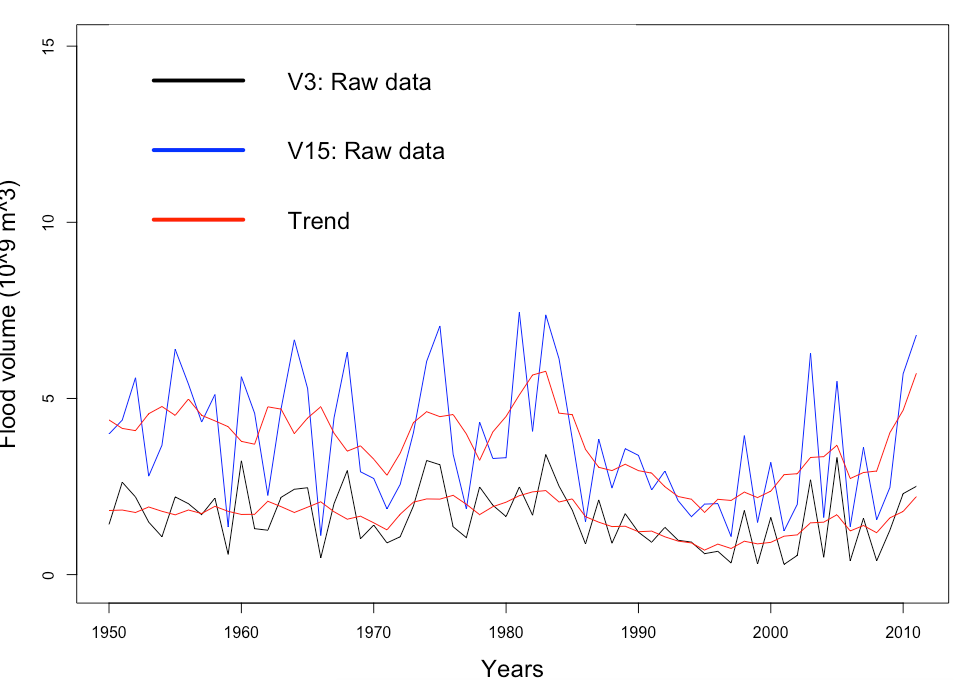}%  vous pouvez modifier la valeur pour avoir la bonne taille 
      \caption{\centering  $V_3$ and $V_{15}$: Raw data}
   \end{subfigure}\hfill
   \begin{subfigure}[b]{0.4\linewidth}
      \centering \includegraphics[scale=0.17]{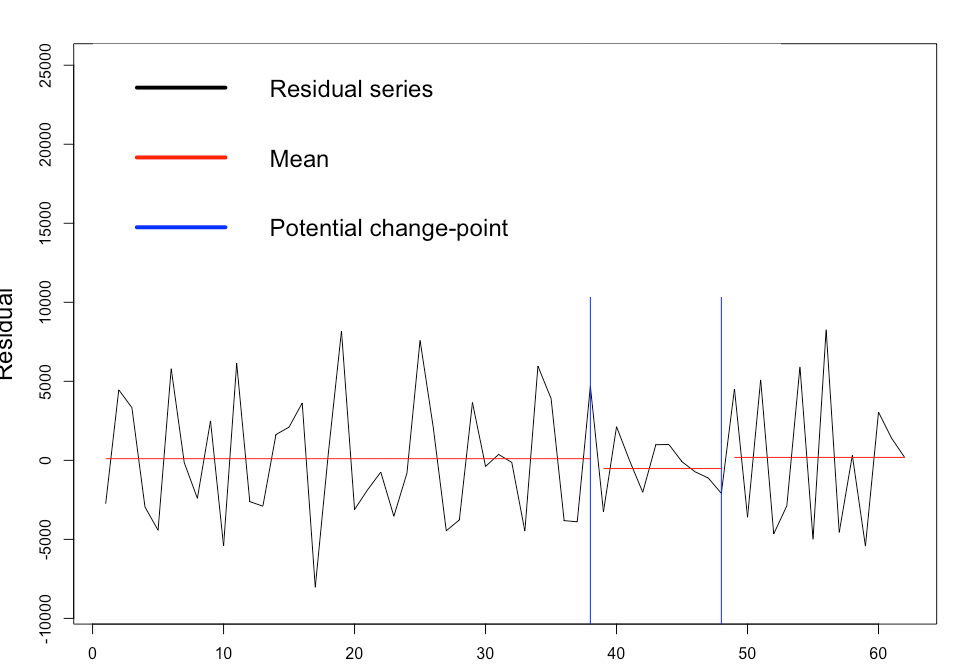}%  vous pouvez modifier la valeur pour avoir la bonne taille 
      \caption{\centering Residual series}
      \end{subfigure} \hfill
      \begin{subfigure}[b]{0.4\textwidth}
      \centering \includegraphics[scale=0.17]{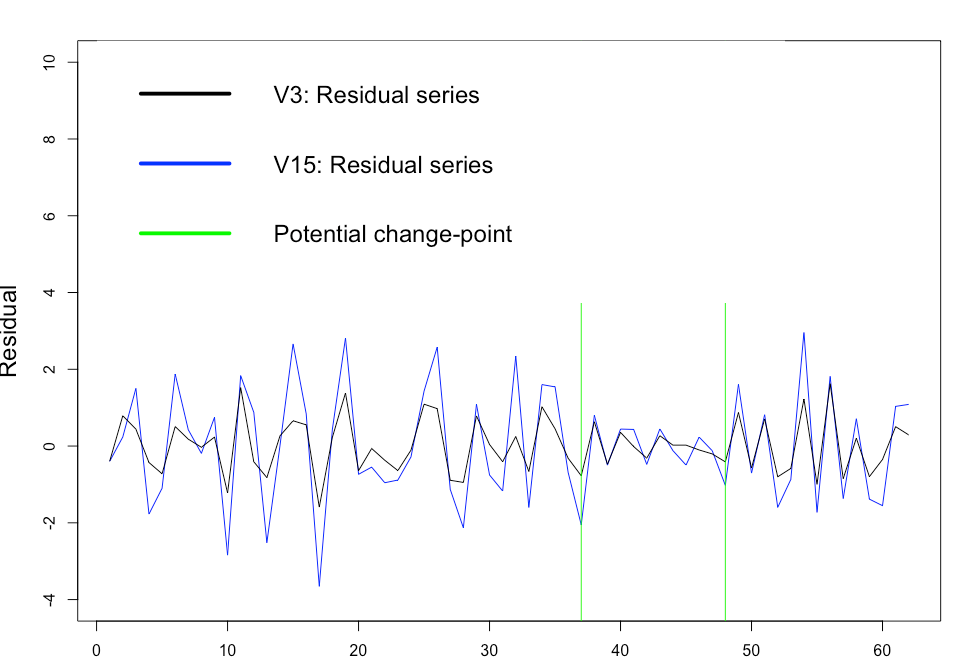}%  vous pouvez modifier la valeur pour avoir la bonne taille 
      \caption{
       \centering Residual series}
      \end{subfigure}\hfill
      \begin{subfigure}[b]{0.4\linewidth}
      \centering \includegraphics[scale=0.17]{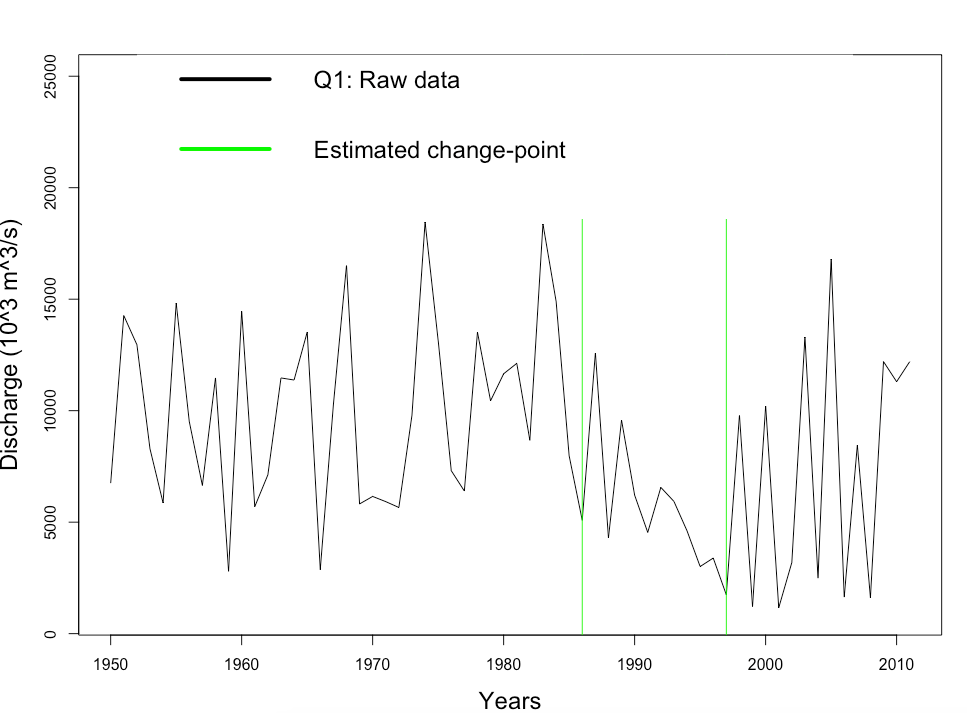}
      \caption{\centering Estimated change point}
      \end{subfigure} \hfill
      \begin{subfigure}[b]{0.4\textwidth}
      \centering \includegraphics[scale=0.17]{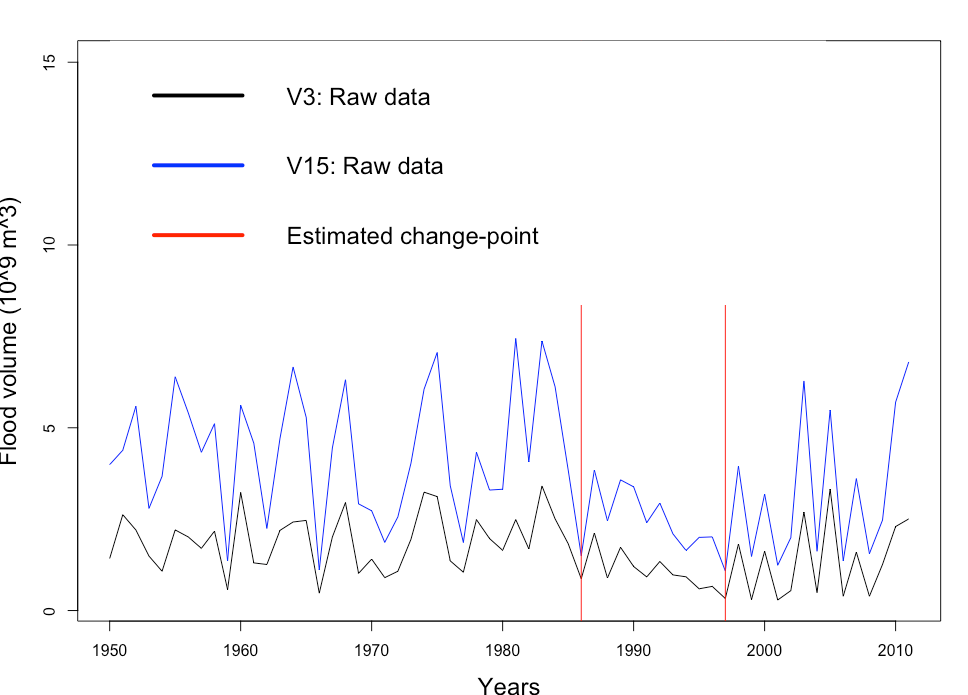}
      \caption{
       \centering Estimated change point}
      \end{subfigure}\hfill
   \caption[]{Change-point locations for real data.}
\label{f4}   
\end{figure}
\end{center}
\section{Appendix : Proofs}
This section provides the proofs of results.
\subsection{Proof of Theorem \ref{th1}}

\noindent 
For any $\beta \in \mathbb R^{k+1}$, 
the log-likelihood ratio of $ H_0 $ against $ H^{(n)}_{\beta}$ is given by
%$$R_n=\prod_{t=1}^n{\dfrac{f(\varepsilon_t(\gamma_n))}{f(\varepsilon_t(\gamma_0))}}.$$
$$\Lambda_n(\gamma_0,\beta)=\sum_{t=1}^n{\left\lbrace\log\left[f(\varepsilon_t(\gamma_n))\right]-\log\left[f(\varepsilon_t(\gamma_0))\right]\right\rbrace}+o_P(1). $$
We first show that as $n\rightarrow\infty,$ $\Lambda_n(\gamma_0,\beta)$ decomposes into
 $$\Lambda_n(\gamma_0,\beta)=\Lambda_{1n}-\Lambda_{2n}+o_P(1),$$
where 
$$\Lambda_{1n}:= \dfrac{1}{\sqrt{n}}\sum_{t=1}^n{\dfrac{\beta^{\top}\omega(t)}{V(Z_{t-1})}\phi_f[\varepsilon_t(\gamma_0)]}$$
% \ \mbox{and } \ \ 
$$\Lambda_{2n}:=\dfrac{1}{2n}\sum_{t=1}^n{\dfrac{\left[\beta^{\top}\omega(t)\right]^2}{V^2(Z_{t-1})}}\phi'_f[\varepsilon_t({\gamma_0})].$$
%It is easy to see that 
%
%$\Lambda_n(\gamma_0,\beta)$ can be written as:
%$$\Lambda_n(\gamma_0,\beta)=\sum_{t=1}^n{\lbrace\log(f[\varepsilon_t(\gamma_n)])-\log(f[\varepsilon_t(\gamma_0)])\rbrace}.$$
By a first-order Taylor expansion of $\log f[\varepsilon_t(\gamma)]$ around $\gamma_0,$ one has for some $\widetilde{\gamma}$ between $\gamma_n$ and $\gamma_0$,
\begin{eqnarray*}
\Lambda_n(\gamma_0,\beta)&=&\left(\gamma_n-\gamma_0\right)^\top \sum_{t=1}^n{{\dfrac{\partial\left[\log f(\varepsilon_t(\gamma))\right]}{\partial{\gamma}}}\Big|_{\gamma=\gamma_0}}
%\\&&
+ \dfrac{1}{2} \left(\gamma_n-\gamma_0\right)^{\top}\dfrac{\partial^2\left[\log f(\varepsilon_t({\gamma}))\right]}{\partial{{\gamma}^2}}\Big|_{\gamma=\widetilde{\gamma}}\left(\gamma_n-\gamma_0\right).
\end{eqnarray*}
From the first- and second-order derivatives of $\log f[\varepsilon_t(\gamma)]$ will respect to $\gamma_0,$ one has 
%\noindent From simple computation, we have for all $t\in\mathbb Z,$ $\dfrac{\partial\left[\log f(\varepsilon_t(\gamma))\right]}{\partial{\gamma}}=\dfrac{\omega(t)}{V(Z_{t-1})}\phi_f[\varepsilon_t(\gamma)]$ and $\dfrac{\partial^2\log \left[ f(\varepsilon_t({\gamma}))\right]}{\partial{{\gamma}^2}}=-\dfrac{\omega(t).\omega(t)^{\top}}{V^2(Z_{t-1})}\phi'_f[\varepsilon_t({\gamma})]$.\\
%\begin{eqnarray*}
%\dfrac{\partial\left[\log f(\varepsilon_t(\gamma))\right]}{\partial{\gamma}}&=&\dfrac{1}{f\left(\varepsilon_t(\gamma)\right)} \dfrac{\partial\left[ f\left(\varepsilon_t(\gamma)\right) \right]}{\partial \gamma}\\
%&=& \dfrac{f'(\varepsilon_i(\gamma))}{f(\varepsilon_i(\gamma))} \dfrac{\partial\left[\varepsilon_i(\gamma)\right]}{\partial \gamma} \\
%&=& \dfrac{f'}{f}(\varepsilon_t(\gamma))\dfrac{\partial}{\partial \gamma}\left(\dfrac{X_t-T(Z_{t-1})-\gamma^{\top}\omega(t)}{V(Z_{t-1})}\right)\\
%&=&\dfrac{\omega(t)}{V(Z_{t-1})}\phi_f(\varepsilon_t(\gamma)).
%\end{eqnarray*}
%\noindent It results from this, that  
%\begin{eqnarray*}
%\dfrac{\partial^2\log \left[ f(\varepsilon_t({\gamma}))\right]}{\partial{{\gamma}^2}}& =&\dfrac{\omega(t)}{V(Z_{t-1})}\phi'_f(\varepsilon_t({\gamma}))\dfrac{\partial }{\partial {\gamma}}(\varepsilon_t({\gamma}))\\
%&=& -\dfrac{\omega(t).\omega(t)^{\top}}{V^2(Z_{t-1})}\phi'_f(\varepsilon_t({\gamma})).
%\end{eqnarray*}
%\noindent Whence, 
\begin{eqnarray*}
\Lambda_n (\gamma_0,\beta)%&= & (\gamma_n-\gamma_0)^{\top}\sum_{t=1}^n{\dfrac{\omega(t)}{V(Z_{t-1})}\phi_f[\varepsilon_t(\gamma_0)]}- \dfrac{1}{2}(\gamma_n-\gamma_0)^{\top}\sum_{t=1}^n{\dfrac{\omega(t)\omega^{\top}(t)}{V^2(Z_{t-1})}}\phi'_f[\varepsilon_t(\widetilde{\gamma})](\gamma_n-\gamma_0).\\
&=& \dfrac{1}{\sqrt{n}}\sum_{t=1}^n{\dfrac{\beta^{\top}\omega(t)}{V(Z_{t-1})}\phi_f[\varepsilon_t(\gamma_0)]}- \dfrac{1}{2n}\sum_{t=1}^n{\dfrac{\left[\beta^{\top}\omega(t)\right]^2}{V^2(Z_{t-1})}}\phi'_f[\varepsilon_t(\widetilde{\gamma})]+o_P(1)\\
&=& \Lambda_{1n}-\Lambda^{(1)}_{1n}+o_P(1).
\end{eqnarray*}
%where $$\Lambda_{1n}:= \dfrac{1}{\sqrt{n}}\sum_{t=1}^n{\dfrac{\beta^{\top}\omega(t)}{V(Z_{t-1})}\phi_f[\varepsilon_t(\gamma_0)]}
%\text { and } \Lambda^{(1)}_{1n}:=\dfrac{1}{2n}\sum_{t=1}^n{\dfrac{\left[\beta^{\top}\omega(t)\right]^2}{V^2(Z_{t-1})}}\phi'_f[\varepsilon_t(\widetilde{\gamma})].$$
%We see that for any $t=1,\ldots,n,$ and for any $\gamma\in\mathbb R^{k+1}$, we have 
%$$\varepsilon_t({\gamma})=\varepsilon_t(\gamma_0)+(\gamma_0-{\gamma})^{\top}\dfrac{\omega(t)}{V(Z_{t-1})}.$$
%\begin{eqnarray*}
%\varepsilon_t({\gamma})&=&\dfrac{X_t-T(Z_{t-1})-{\gamma}^{\top}\omega(t)}{V(Z_{t-1})}\\
%&=& \varepsilon_t(\gamma_0)+(\gamma_0-{\gamma})^{\top}\dfrac{\omega(t)}{V(Z_{t-1})}.
%\end{eqnarray*}
Then we can write, by a first-order Taylor expansion of $\varepsilon_t(\gamma)$ around $\gamma_0,$ $$\Lambda^{(1)}_{1n}:=\dfrac{1}{2n}\sum_{t=1}^n{\dfrac{\left[\beta^{\top}\omega(t)\right]^2}{V^2(Z_{t-1})}}\phi'_f\left[\varepsilon_t(\gamma_0)+(\gamma_0-\widetilde{\gamma})^{\top}\dfrac{\omega(t)}{V(Z_{t-1})}\right].$$
From $(A_3)$, we have
$$ \left|\dfrac{1}{n}\sum_{t=1}^n{\dfrac{\left(\beta^{\top}\omega(t)\right)^2}{V^2(Z_{t-1})}}\phi'_f\left[\varepsilon_t(\gamma_0)+(\gamma_0-\widetilde{\gamma})^{\top}\dfrac{\omega(t)}{V(Z_{t-1})}\right]-\dfrac{1}{n}\sum_{t=1}^n{\dfrac{\left[\beta^{\top}\omega(t)\right]^2}{V^2(Z_{t-1})}}\phi'_f\left[\varepsilon_t(\gamma_0)\right]\right|$$ $$\leq \dfrac{1}{n} C_\phi \sum_{t=1}^n{\dfrac{\left[\beta^{\top}\omega(t)\right]^2}{V^2(Z_{t-1})}\Big|(\gamma_0-\widetilde{\gamma})^{\top}\dfrac{\omega(t)}{V(Z_{t-1})}\Big|}.$$
%It follows that,
%$$ \left|\dfrac{1}{n}\sum_{t=1}^n{\dfrac{\left(\beta^{\top}\omega(t)\right)^2}{V^2(Z_{t-1})}}\phi'_f\left[\varepsilon_t(\gamma_0)+(\gamma_0-\widetilde{\gamma})^{\top}\dfrac{\omega(t)}{V(Z_{t-1})}\right]-\dfrac{1}{n}\sum_{t=1}^n{\dfrac{\left[\beta^{\top}\omega(t)\right]^2}{V^2(Z_{t-1})}}\phi'_f\left[\varepsilon_t(\gamma_0)\right]\right|$$ $$\leq \dfrac{1}{n} C_\phi \sum_{t=1}^n{\dfrac{\left[\beta^{\top}\omega(t)\right]^2||\omega(t)||}{V^3(Z_{t-1})}||\gamma_0-\widetilde{\gamma}||}$$
$$\leq (k+1) C_\phi ||\gamma_0-\widetilde{\gamma}|| \left(\dfrac{1}{n}\sum_{t=1}^n{\dfrac{\left[\beta^{\top}\omega(t)\right]^2}{V^3(Z_{t-1})}}\right).$$
Expanding the term in brackets in terms of a sum of sums over the $[t_{j-1},t_j)$'s, it is easy to see that, by the ergodic theorem it tends almost surely to $\sum_{j=1}^{k+1}\alpha_j \beta^2_j \mu_{j3}(\gamma_0)$.
%Thanks to the ergodic theorem, for any $j=1,\ldots,k+1,$ almost surely,
%$$ \lim_{n\longrightarrow \infty}{ \dfrac{1}{n_j(n)}\sum_{t=t_{j-1}}^{t_j}{\dfrac{1}{V^3(Z_{t-1})}}}=\mu_{j,3}<\infty.$$
%\noindent So, almost surely, $$\lim_{n\rightarrow \infty}{ \dfrac{1}{n}\sum_{t=1}^{n}{\dfrac{\left[\beta^{\top}\omega(t)\right]^2}{V^3(Z_{t-1})}}}= \sum_{j=1}^{k+1}{\lim_{n\rightarrow \infty}\dfrac{n_j(n)}{n}\beta^2_j\mu_{j,3}}=\sum_{j=1}^{k+1}{\alpha_j \beta^2_j \mu_{j,3}<\infty}.$$
Since $\widetilde{\gamma}$ is between $\gamma_0$ and $\gamma_n$ and $\gamma_n-\gamma_0={\beta/\sqrt{n}}$, it follows that $||\gamma_0-\widetilde{\gamma}||\leq {||\beta||/\sqrt{n}}$ which tends to zero as $ n $ goes to infinity. Whence almost surely,
$$\lim_{n\rightarrow \infty}{\dfrac{1}{n}  \sum_{t=1}^n{\dfrac{\left[\beta^{\top}\omega(t)\right]^2}{V^3(Z_{t-1})}||\gamma_0-\widetilde{\gamma}||}}=0.$$
Thus, for larger value of $n$,
$$\Lambda^{(1)}_{1n}=\Lambda_{2n}+o_P(1).$$
Let's show now that under $H_0$, as $n\rightarrow\infty,$
$$  {\Lambda_{1n}}\overset{D}{\longrightarrow} \mathcal{N}(0,\mu(\gamma_0,\beta)).$$
For this, we consider
$${\Lambda}_{n,j}=\dfrac{1}{\sqrt{n}}\sum_{t=1}^j{\dfrac{\beta^{\top}\omega(t)}{V(Z_{t-1})}\phi_f\left[\varepsilon_t(\gamma_0)\right]},\quad  j=1,\ldots n,$$
and we define for any $t=1, \ldots, n,$
$$\xi_{n,t}=\dfrac{1}{\sqrt{n}}{\dfrac{\beta^{\top}\omega(t)}{V(Z_{t-1})}\phi_f\left[\varepsilon_t(\gamma_0)\right]}.$$
Then we observe that $\Lambda_{n,j}$ is a zero-mean process that can be rewritten as 
$$\Lambda_{n,j}=\sum_{t=1}^j{\xi_{n,t}},\quad j=1,\ldots,n$$
%\medskip

\noindent First, we show  that $\lbrace({\Lambda}_{n,j},G_j),j=1,\ldots,n\rbrace$ is a sequence of martingales where we recall that $G_{t-1}=\sigma(Z_1,\ldots,Z_t)$ is a $\sigma$-algebra spanned  by $Z_1,\ldots,Z_t,$ $t\in\mathbb Z.$
\medskip

Now, let $j_1,j_2\in\mathbb Z$, $j_1<j_2$. We have:
\begin{eqnarray*}
 \mathbb E\left({\Lambda}_{n,j_2}\Big|G_{j_1}\right)
 %&=&E\left\lbrace\dfrac{1}{\sqrt{n}}\sum_{t=1}^{j_2}{\dfrac{\beta^{\top}\omega(t)}{V(Z_{t-1})}\phi_f\left[\varepsilon_t(\gamma_0)\right]}\Big|G_{j_1}\right\rbrace\\
& =& \mathbb E\left[\dfrac{1}{\sqrt{n}}\sum_{t=1}^{j_1}{\dfrac{\beta^{\top}\omega(t)}{V(Z_{t-1})}\phi_f\left[\varepsilon_t(\gamma_0)\right]}\Big|G_{j_1}\right]
%\\&&
+  \mathbb E\left[\dfrac{1}{\sqrt{n}}\sum_{t={j_1}+1}^{j_2}{\dfrac{\beta^{\top}\omega(t)}{V(Z_{t-1})}\phi_f\left[\varepsilon_t(\gamma_0)\right]}\Big|G_{j_1}\right]\\
& = &\dfrac{1}{\sqrt{n}}\sum_{t=1}^{j_1}{\dfrac{\beta^{\top}\omega(t)}{V(Z_{t-1})}\phi_f\left[\varepsilon_t(\gamma_0)\right]}
%\\&&
+\dfrac{1}{\sqrt{n}}\sum_{t={j_1}+1}^{j_2}{\mathbb E\left[\dfrac{\beta^{\top}\omega(t)}{V(Z_{t-1})}\Big|G_{j_1}\right] \mathbb E\left\lbrace\phi_f\left[\varepsilon_t(\gamma_0)\right]\right\rbrace.}
\end{eqnarray*}
From Remark \ref{rm2}, we have $\mathbb E\left\lbrace \phi_f\left[\varepsilon_{t}(\gamma_0)\right]\right\rbrace=0$.
Then
 $$ \mathbb E\left({\Lambda}_{n,j_2}\Big|G_{j_1}\right)=\Lambda_{n,j_1}.$$
Also, we have
\begin{eqnarray*}
\sum_{t=1}^n{\mathbb E(\xi^2_{n,t}|G_{t-1} )} %&=& \sum_{t=1}^n{E\left\lbrace\dfrac{1}{n}\dfrac{\left(\beta^{\top}\omega(t)\right)^2}{V^2(Z_{t-1})}\phi_f^2 \left[\varepsilon_t(\gamma_0)\right]\Big|G_{t-1}\right\rbrace}\\
&{=}& \dfrac{1}{n}\sum_{t=1}^n{\dfrac{\left[\beta^{\top}\omega(t)\right]^2}{V^2(Z_{t-1})} \mathbb E\left\lbrace\phi_f^2\left[\varepsilon_t(\gamma_0)\right]\Big|G_{t-1}\right\rbrace}.
\end{eqnarray*}
Since for any $t=1,\ldots,n,$ $\varepsilon_t$ is independent of $G_{t-1},$ we have 
\begin{eqnarray*}
\sum_{t=1}^n{\mathbb E(\xi^2_{n,t}|G_{t-1} )}%&{=}& \dfrac{1}{n}\sum_{t=1}^n{\dfrac{\left[\beta^{\top}\omega(t)\right]^2}{V^2(Z_{t-1})} E\left\lbrace\phi_f^2\left[\varepsilon_t(\gamma_0)\right]\right\rbrace}\\
&=& \dfrac{1}{n}\sum_{j=1}^{k+1}{\sum_{t={t_{j-1}}}^{t_j}{\dfrac{\beta^2_j}{V^2(Z_{t-1})} \mathbb E\left\lbrace\phi_f^2\left[\varepsilon_t(\gamma_0)\right]\right\rbrace}}\\
%&=&  \sum_{j=1}^{k+1}{ \dfrac{1}{n}\sum_{t={t_{j-1}}}^{t_j}{\dfrac{\beta^2_j}{V^2(Z_{t-1})} E\left\lbrace\phi_f^2\left[\varepsilon_t(\gamma_0)\right]\right\rbrace}}\\
&=&  \sum_{j=1}^{k+1}{ \dfrac{n_j(n)}{n}\dfrac{\beta^2_j}{n_j(n)}\sum_{t={t_{j-1}}}^{t_j}{\dfrac{1}{V^2(Z_{t-1})} \mathbb E\left\lbrace\phi_f^2\left[\varepsilon_t(\gamma_0)\right]\right\rbrace}}.
\end{eqnarray*}
From our iid assumption on the $\varepsilon_t'$s we have $$\mathbb E\left\lbrace\phi_f^2\left[\varepsilon_t(\gamma_0)\right]\right\rbrace=\displaystyle\int_{\mathbb R}{\phi^2_f(x)f(x)}dx=I(f).$$
From the ergodic theorem, for any $j=1,\ldots,k+1,$ almost surely,
$$\lim_{n\rightarrow\infty}{\dfrac{1}{n_j(n)}\sum_{t={t_{j-1}}}^{t_j}{\dfrac{1}{V^2(Z_{t-1})}\mathbb E\left\lbrace\phi_f^2\left[\varepsilon_t(\gamma_0)\right]\right\rbrace}}=\mu_{j2}<\infty.$$
Then, 
$$\lim_{n\rightarrow\infty}{\sum_{t=1}^n{\mathbb E(\xi^2_{n,t}|G_{t-1} )}}=\mu(\gamma_0,\beta)=\sum_{j=1}^{k+1}{\alpha_j\beta^2_j\mu_{j2}}(\gamma_0)<\infty.$$
\noindent Now, we check the conditional Lindeberg conditions. 
\medskip

\noindent
Let $\epsilon>0$,
using the H\"older conditional inequality, we have
\begin{eqnarray*}
&&\sum_{t=1}^n\mathbb E\left(\xi^2_{n,t}\nbI_{|\xi_{n,t}|>\epsilon }| G_{t-1}\right)
%&{ \le } & 
\le
\sum_{t=1}^n \mathbb E^{\frac{2}{3}}\left( \xi^3_{n,t} |G_{t-1}\right) \mathbb E^{\frac{1}{3} }\left(\nbI_{|\xi_{n,t}|>\epsilon} | G_{t-1}\right)\\
& = & \sum_{t=1}^n\mathbb E^{\frac{2}{3}} \left[ n^{-\frac{3}{2}}\dfrac{\left[\beta^{\top}\omega(t)\right]^3}{V^3(Z_{t-1})} \phi^3_f\left[\varepsilon_t(\gamma_0)\right]\Big|G_{t-1}\right]
 \mathbb P^{\frac{1}{3}}\left(|\xi_{n,t}|>\epsilon\Big|G_{t-1}\right).
 \end{eqnarray*}
From Markov inequality, we have  
 \begin{eqnarray*}
&& \sum_{t=1}^n{\mathbb E\left(\xi^2_{n,t}\nbI_{|\xi_{n,t}|>\epsilon }| G_{t-1}\right)}\\%&\leq & n^{-1}\sum_{t=1}^n{ \left[\beta^{\top}\omega(t)\right]^2\dfrac{1}{V^2(Z_{t-1})}E^{\frac{2}{3}}\left\lbrace\phi_f^3\left[\varepsilon_t(\gamma_0)\right]\Big|G_{t-1}\right\rbrace \dfrac{E^{\frac{1}{3}}\left\lbrace|\xi_{n,t}|^3 |G_{t-1}\right\rbrace}{\epsilon^{\frac{1}{3}}}}\\
&\leq & Cst\dfrac{1}{n} \sum_{t=1}^n{\dfrac{\left[\beta^{\top}\omega(t)\right]^2}{V^2(Z_{t-1})} \mathbb E^{\frac{2}{3}}\left\lbrace\phi_f^3\left[\varepsilon_t(\gamma_0)\right]\Big|G_{t-1}\right\rbrace} 
%\\&&
\times  \mathbb E^{\frac{1}{3}}\left[ n^{-\frac{3}{2}}\dfrac{|\beta^{\top}\omega(t)|^3}{V^3(Z_{t-1})}|\phi_f[\varepsilon_t(\gamma_0)]|^3 \Big| G_{t-1}\right] \notag \\
 %&=& Cst \sum_{t=1}^n{n^{-\frac{3}{2}} \dfrac{\left[\beta^{\top}\omega(t)\right]^2}{V^2(Z_{t-1})}E^{\frac{2}{3}}\left\lbrace\phi^3_f\left[\varepsilon_t(\gamma_0)\right]\right\rbrace}\dfrac{|\beta^{\top}\omega(t)|}{V(Z_{t-1})}
% \times  E^{\frac{1}{3}}\left\lbrace|\phi_f\left[\varepsilon_t(\gamma_0)\right]|^3 \right\rbrace \notag \\
 %&\leq & Cst \sum_{t=1}^n{n^{-\frac{3}{2}}\dfrac{|\beta^{\top}\omega(t)|^3}{V^3(Z_{t-1})}E\left[|\phi_f\left(\varepsilon_t(\gamma_0)\right)|^3 \Big| G_{t-1}\right]}\\
  &\leq & Cst \sum_{t=1}^n{n^{-\frac{3}{2}}\dfrac{|\beta^{\top}\omega(t)|^3}{V^3(Z_{t-1})}\mathbb E\left\lbrace |\phi_f\left[\varepsilon_t(\gamma_0)\right]|^3 \right\rbrace} \notag \\
% &=& Cst \sum_{j=1}^k{n^{-\frac{3}{2}}\sum_{t=t_{j-1}}^{t_j}{\dfrac{|\beta_j|^3}{V^3(Z_{t-1})}E\left[|\phi_f\left(\varepsilon_t(\gamma_0)\right)|^3 \right]}}\\
 &=& Cst \sum_{j=1}^{k+1}{\left(\dfrac{n_j(n)}{n}\right)^{\frac{3}{2}}\dfrac{|\beta_j|^3}{(n_j( n))^{\frac{3}{2}}}\sum_{t=t_{j-1}}^{t_j}{\dfrac{1}{V^3(Z_{t-1})}\mathbb E\left\lbrace|\phi_f\left[\varepsilon_t(\gamma_0)\right]|^3 \right\rbrace}}.
\end{eqnarray*}
For any $j=1,\ldots,k+1$, observing that 
\begin{eqnarray*}
%&& 
{n_j^{-\frac{3}{2}}(n)\sum_{t=t_{j-1}}^{t_j}{\dfrac{1}{V^3(Z_{t-1})}\mathbb E\left\lbrace|\phi_f\left[\varepsilon_t(\gamma_0)\right]|^3 \right\rbrace}}
%&=& \lim_{n\rightarrow\infty}{n_j^{-\frac{1}{2}}(n) n_j^{-1}(n)\sum_{t=t_{j-1}}^{t_j}{\dfrac{1}{V^3(Z_{t-1})}E\left\lbrace|\phi_f\left[\varepsilon_t(\gamma_0)\right]|^3 \right\rbrace}}\\
&=& {n_j^{-\frac{1}{2}}(n)}
%\\&&
\times { n_j^{-1}(n)\sum_{t=t_{j-1}}^{t_j}{\dfrac{1}{V^3(Z_{t-1})}\mathbb E\left\lbrace|\phi_f\left[\varepsilon_t(\gamma_0)\right]|^3 \right\rbrace}}
%&=& 0\times {\displaystyle\int{\dfrac{1}{V^3(x)}}dF_j(x)} \times \int{|\phi^3_f(x)|f(x)}dx
%& = & 0\quad { a.s }.
\end{eqnarray*}
and noting that the second term in the right-hand side of the above equation tends almost surely to a finite limit, we conclude easily that for all positive $\epsilon$, 
$$\lim_{n\rightarrow\infty}{\sum_{t=1}^n{\mathbb E\left(\xi^2_{n,t}\nbI_{|\xi_{n,t}|>\epsilon }| G_{t-1}\right)}}=0\quad a.s.$$
Thus, the conditions of Corollary 3.1 of \cite{hall} are verified.
Accordingly, under $H_0,$
$$
{\Lambda_{1n}}\overset{D}{\longrightarrow} \mathcal{N}(0,\mu(\gamma_0,\beta)), \ n\rightarrow\infty.$$
Now, turning to the study of $\Lambda_{2n}$, it is easy to see that 
$$\Lambda_{2n}= \dfrac{1}{2} \sum_{j=1}^{k+1}{\dfrac{n_j(n)}{n}\dfrac{\beta^2_j}{n_j(n)} \sum_{t=t_{j-1}}^{t_j}{\dfrac{1}{V^2(Z_{t-1})}}\phi'_f[\varepsilon_t({\gamma_0})]}.$$
%\begin{eqnarray*}
%\Lambda_{2n}&=&\dfrac{1}{2n}\sum_{t=1}^n{\dfrac{\left[\beta^{\top}\omega(t)\right]^2}{V^2(Z_{t-1})}}\phi'_f(\varepsilon_t({\gamma_0}))\\
%&=& \dfrac{1}{2} \sum_{j=1}^k{\dfrac{1}{n} \sum_{t=t_{j-1}}^{t_j}{\dfrac{\beta^2_j}{V^2(Z_{t-1})}}\phi'_f(\varepsilon_t({\gamma_0}))}\\
%&=& \dfrac{1}{2} \sum_{j=1}^k{\dfrac{n_j(n)}{n}\dfrac{\beta^2_j}{n_j(n)} \sum_{t=t_{j-1}}^{t_j}{\dfrac{1}{V^2(Z_{t-1})}}\phi'_f(\varepsilon_t({\gamma_0}))}.
%\end{eqnarray*}
Since, under our assumptions, $\mathbb E\lbrace\phi'_f[\varepsilon_1({\gamma_0})]\rbrace=I(f),$ by Remark $(ii)$ and the ergodic theorem, for all $ j = 1, \ldots, k+1, $ we have, almost surely
 $$\lim_{n\rightarrow +\infty}{\dfrac{1}{n_j(n)} \sum_{t=t_{j-1}}^{t_j}{\dfrac{1}{V^2(Z_{t-1})}\phi'_f[\varepsilon_t({\gamma_0})]}}=I(f){\displaystyle\int_{\mathbb R^p}{\dfrac{1}{V^2(x)}}dF_j(x)}=\mu_{j2}(\gamma_0).$$
Thus, almost surely,
$$\lim_{n\rightarrow +\infty}{\Lambda_{2n}}=\dfrac{1}{2}\sum_{j=1}^{k+1}{\alpha_j \beta^2_j \mu_{j2}}(\gamma_0)=\dfrac{\mu(\gamma_0,\beta)}{2},$$
and we can conclude that, as $n\rightarrow\infty,$
$$\Lambda_n(\gamma_0,\beta)=\Lambda_{1n}-\dfrac{\mu(\gamma_0,\beta)}{2}+o_P(1).$$
It results from the fact that under $H_0$, $\Lambda_{1n}\overset{D}{\longrightarrow} \mathcal{N}(0,\mu(\gamma_0,\beta)),$ as $n\rightarrow\infty,$ that under $H_0$, as $n\rightarrow\infty,$
$$\Lambda_n(\gamma_0,\beta)\overset{D}{\longrightarrow} \mathcal{N}\left(-\dfrac{\mu(\gamma_0,\beta)}{2},\mu(\gamma_0,\beta)\right).$$ 
Finally, the LAN property is established for the central sequence $\Delta_n(\gamma_0,\beta)\equiv\Lambda_{1n}.$
\subsection{Proof of Theorem \ref{th2}}
\noindent It follows immediately from Theorem \ref{th1} that, under $H_0$, as $n\rightarrow\infty,$
$$\begin{pmatrix} 
\Delta_n(\gamma_0,\beta) \\ 
\Lambda_n(\gamma_0,\beta)
 \end{pmatrix}\overset{D}{\longrightarrow} \mathcal{N}\left(\begin{pmatrix} 
0 \\ 
-\dfrac{\mu(\gamma_0,\beta)}{2}
 \end{pmatrix} , \begin{pmatrix} 
\mu(\gamma_0,\beta) &\mu (\gamma_0,\beta)\\ 
\mu(\gamma_0,\beta) & \mu(\gamma_0,\beta)
 \end{pmatrix} \right).$$
 The part (i) is an easy consequence of Theorem \ref{th1}.
\smallskip

\noindent
For the part (ii), from \cite{dreosebeke} (see Proposition 4.2), under $H^{(n)}_{\beta},$ as $n\rightarrow\infty,$
$$\Delta_n(\gamma_0,\beta)\overset{D}{\longrightarrow} \mathcal{N}(\mu(\gamma_0,\beta),\mu(\gamma_0,\beta)).$$
%\begin{corollary}
%Sous les hypothèses du théorème précédent, les suites des hypothèses \\ $\lbrace H^{(n)}_{\beta}:n\geq 1\rbrace $ et $\lbrace H^{(n)}_0=H_0:n\geq 1\rbrace $ sont mutuellement contigües.
%\end{corollary}
Observing that for all $n\geq 1,$ $$\dfrac{\Delta_n(\gamma_0,\beta)}{\widehat{\varpi}_n(\gamma_0,\beta)}=\dfrac{\Delta_n(\gamma_0,\beta)}{\varpi(\gamma_0,\beta)}\times \dfrac{\varpi(\gamma_0,\beta)}{\widehat{\varpi}_n(\gamma_0,\beta)}.$$
Since, under $H_0,$ as $n\rightarrow \infty,$ $\widehat{\varpi}_n(\gamma_0,\beta)\longrightarrow  \varpi(\gamma_0,\beta),$ by contiguity, the result still holds under $H^{(n)}_{\beta}.$ It follows from Theorem \ref{th1} that, under $H^{(n)}_{\beta},$ as $n\rightarrow \infty,$ $$\dfrac{\Delta_n(\gamma_0,\beta)}{\varpi(\gamma_0,\beta)}\overset{D}{\longrightarrow} \mathcal{N}(\varpi(\gamma_0,\beta),1).$$
%So, it is easy to see that, under $H^{(n)}_{\beta},$ as $n\rightarrow \infty,$ $$\dfrac{\Delta_n(\gamma_0,\beta)}{\widehat{\varpi}_n}\overset{D}{\longrightarrow} \mathcal{N}(\mu(\gamma_0,\beta),1).$$
Accordingly, we obtain the asymptotic cumulative distribution of $\mathcal{T}_n(\gamma_0,\beta)$ under $H^{(n)}_{\beta}$ given by 
\begin{eqnarray*}
 \lim_{n\rightarrow\infty}{\mathbb P\left(\dfrac{\Delta_n(\gamma_0,\beta)}{\widehat{\varpi}_n(\gamma_0,\beta)}>u_\alpha|H^{(n)}_{\beta}\right)}&=&\lim_{n\rightarrow\infty}{\mathbb P\left(\dfrac{\Delta_n(\gamma_0,\beta)}{{\varpi}(\gamma_0,\beta)}>u_\alpha|H^{(n)}_{\beta}\right)}\\
&=&1-\Phi\left(u_\alpha-\varpi (\gamma_0,\beta)\right),
\end{eqnarray*}
where for $\alpha \in (0,1),$ $u_\alpha$ is the   ($1-\alpha$)-quantile of a standard normal distribution with cumulative distribution $\Phi.$
\smallskip

\noindent
The part (iii) results from the LAN property and the fact that the limiting model is a Gaussian location model (see, eg, \cite{lecam} or \cite{dreosebeke}).
\subsection{Proof of Proposition \ref{pr1}}
(i)
Let for any $1\leq j \leq n,$
$$L_{n,j}=n^{-\frac{1}{2}}\sum_{t=1}^j{\left(\Psi_1(Z_{t-1},\psi_0)\Omega_1[\varepsilon_t(\psi_0,\gamma_0)],\Psi_2(Z_{t-1},\psi_0)\Omega_2[\varepsilon_t(\psi_0,\gamma_0)]\right)}+o_P(1),$$
and $\eta=(\eta^{\top}_1,\eta^{\top}_2)^{\top}\in\mathbb R^{l+q}$ with $\eta_1=(\eta^1_1,\ldots, \eta^l_1)^{\top}$ and $\eta_2=(\eta^1_2,\ldots, \eta^q_2)^{\top}$.\\
We observe that for any $n\geq1$ and $1\leq j \leq n,$  
$$\eta^{\top} L_{n,j}=n^{-\frac{1}{2}}\sum_{t=1}^j{a_t(\eta)},$$
%where for all $t\in\mathbb Z,$ 
$$a_t(\eta)=\sum_{\ell=1}^2{\eta^{\top}_\ell \Psi_\ell(Z_{t-1},\psi_0)\Omega_\ell[\varepsilon_t(\psi_0,\gamma_0)]}, \ t\in\mathbb Z.$$
It's easy to see that for any $j=1,\ldots,n,$ $\eta^{\top}{L}_{n,j}$ is centred at 0.\\
\noindent We first show that $\lbrace(\eta^{\top}{L}_{n,j},G_j),j=1,\ldots,n\rbrace$ is a sequence of martingales. Since the proof is very similar to that of $\Lambda_{n,j}$ in the proof of Theorem \ref{th1}, we don't detail it much.
\smallskip

\noindent
Let $j_1,j_2\in\mathbb Z$ such that $j_1<j_2$. We have:
\begin{eqnarray*}
\mathbb E(\eta^{\top}{L}_{n,j_2}|G_{j_1})&=&\mathbb E(\eta^{\top}{L}_{n,j_1}|G_{j_1})+n^{-\frac{1}{2}} \sum_{t={j_1+1}}^{j_2}{\mathbb E[a_t(\eta)|G_{j_1}]}\\
%&=& \eta^{\top}{L}_{n,j_1}+n^{-\frac{1}{2}} \sum_{t={j_1+1}}^{j_2}{\sum_{\ell=1}^2{E\lbrace\eta^{\top}\Psi_\ell(Z_{t-1},\psi_0)\Omega_\ell[\varepsilon_t(\psi_0,\gamma_0)]|G_{j_1}\rbrace}}\\
&=& \eta^{\top}{L}_{n,j_1}+n^{-\frac{1}{2}} \sum_{t={j_1+1}}^{j_2}{\sum_{\ell=1}^2{\mathbb E[\eta^{\top}\Psi_\ell(Z_{t-1},\psi_0)]\mathbb E\lbrace\Omega_\ell[\varepsilon_t(\psi_0,\gamma_0)]\rbrace}}.
\end{eqnarray*}
As for any $\ell=1,2$ and $t\in\mathbb Z,$ $\mathbb E\lbrace\Omega_\ell[\varepsilon_t(\psi_0,\gamma_0)]\rbrace=0$, 
we have $\mathbb E(\eta'{L}_{n,j_2}|G_{j_1})= \eta'{L}_{n,j_1}.$
\smallskip

\noindent
Also,
\begin{equation}
\sum_{t=1}^n{\mathbb E\lbrace[n^{-\frac{1}{2}}a_t(\eta)]^2|G_{t-1} \rbrace}= \dfrac{1}{n}\sum_{t=1}^n{\mathbb E[a^2_t(\eta)|G_{t-1}]}.
\end{equation}
Replacing $ a^2_t (\eta) $ by its expression, we get
\begin{eqnarray*}
&&\sum_{t=1}^n{\mathbb E\lbrace[n^{-\frac{1}{2}}a_t(\eta)]^2|G_{t-1} \rbrace}%&=& \dfrac{1}{n}\sum_{t=1}^n{E\left[\left(\sum_{\ell=1}^2{\eta^{\top}_\ell \Psi_\ell(Z_{t-1},\psi_0)\Omega_\ell[\varepsilon_t(\psi_0,\gamma_0)]}\right]^2  \Big|G_{t-1}\right]}\\
\\
&=& \dfrac{1}{n}\sum_{t=1}^n{\mathbb E\left(\left\lbrace\eta^{\top}_1 \Psi_1(Z_{t-1},\psi_0)\Omega_1[\varepsilon_t(\psi_0,\gamma_0)]+\eta^{\top}_2\Psi_2(Z_{t-1},\psi_0)\Omega_2[\varepsilon_t(\psi_0,\gamma_0)]\right \rbrace^2  \Big|G_{t-1}\right)}
\end{eqnarray*}
Using the fact that for any $ t \in \mathbb Z,$ $ \varepsilon_t $ independent of $ G_ {t-1} $, it is easy to see that 
\begin{eqnarray*}
& & 
\sum_{t=1}^n{\mathbb E\lbrace[n^{-\frac{1}{2}}a_t(\eta)]^2|G_{t-1} \rbrace}%\notag \\&=& 
=\sum_{j=1}^{k+1}{\dfrac{n_j(n)}{n}\dfrac{1}{n_j(n)}\sum_{t={t_{j-1}}}^{t_j}{ [\eta^{\top}_1  \Psi_1(Z_{t-1},\psi_0)]^2\mathbb E\Big\lbrace\Omega^2_1[\varepsilon_t(\psi_0,\gamma_0)]\Big\rbrace}}\\
&+& \sum_{j=1}^{k+1}{\dfrac{n_j(n)}{n}\dfrac{1}{n_j(n)}\sum_{t={t_{j-1}}}^{t_j}{ [\eta^{\top}_2  \Psi_2(Z_{t-1},\psi_0)]^2 \mathbb E\Big\lbrace\Omega^2_2[\varepsilon_t(\psi_0,\gamma_0]\Big\rbrace}}\\
 &+&\sum_{j=1}^{k+1}{\dfrac{n_j(n)}{n}\dfrac{1}{n_j(n)}\sum_{t={t_{j-1}}}^{t_j}{\eta^{\top}_1\Psi_1(Z_{t-1},\psi_0).\eta^{\top}_2\Psi_2(Z_{t-1},\psi_0)\mathbb E\left\lbrace\Omega_1[\varepsilon_t(\psi_0,\gamma_0)] \Omega_2[\varepsilon_t(\psi_0,\gamma_0)]\right\rbrace}}\\
 &=& S^{(1)}_n+S^{(2)}_n+S^{(3)}_n.
\end{eqnarray*}
From the ergodic theorem, we have, as $n\rightarrow\infty,$
 $$S^{(1)}_n\overset{a.s}{\longrightarrow}\sum_{j=1}^{k+1}{\alpha_j\int_{\mathbb R^p}{{[\eta^{\top}_1} \Psi_1(x,\psi_0)]^2}dF_j(x)\int_{\mathbb R}{ \Omega^2_1(x)f(x)}dx}<\infty,$$
$$S^{(2)}_n\overset{a.s}{\longrightarrow}\sum_{j=1}^{k+1}{\alpha_j\int_{\mathbb R^p}{[{\eta^{\top}_2} \Psi_2(x,\psi_0)]^2}dF_j (x)\int_{\mathbb R}{\Omega^2_2(x)f(x)}dx}<\infty,$$
%and
$$S^{(3)}_n\overset{a.s}{\longrightarrow}2\sum_{j=1}^{k+1}{\alpha_j\int_{\mathbb R^p}{\eta^{\top}_1\Psi_1(x,\psi_0).\eta^{\top}_2\Psi_2(x,\psi_0)}dF_j(x)\int_{\mathbb R}{\Omega_1(x)\Omega_2(x)f(x)}dx}<\infty.$$
Whence, as $n\rightarrow\infty,$ almost surely, 
$$\sum_{t=1}^n{E[(n^{-\frac{1}{2}}a_t(\eta))^2|G_{t-1} ]}\overset{}{\longrightarrow} \xi$$
%where 
\begin{eqnarray*}
\xi &=& \sum_{j=1}^{k+1}{\alpha_j}\Big[ \int_{\mathbb R}{ \Omega^2_1(x)f(x)}dx\int_{\mathbb R^p}{[{\eta^{\top}_2} \Psi_1(x,\psi_0)]^2}dF_j(x)
%\\&&
+ \int_{\mathbb R}{\Omega^2_2(x)f(x)}dx\int_{\mathbb R^p}{[{\eta^{\top}_1}\Psi_2(x,\psi_0)]^2}dF_j(x) \\
&&+  \int_{\mathbb R}{\Omega_1(x)\Omega_2(x)f(x)}dx\int_{\mathbb R^p}{\eta^{\top}_1\Psi_1(x,\psi_0)\eta^{\top}_2\Psi_2(x,\psi_0)}dF_j(x)\Big].
 \end{eqnarray*}
%\smallskip
%
%\noindent 
It remains to check the Lindeberg condition. This can be done in the same line as in the proof of Theorem \ref{th1}.
%\smallskip
%
%\noindent
%Let $\epsilon>0$,
%using the H\"older conditional inequality, we have
%\begin{eqnarray*}
%&&\sum_{t=1}^n{E\left\lbrace[n^{-\frac{1}{2}}a_t(\eta)]^2\nbI_{|n^{-\frac{1}{2}}a_t(\eta)|>\epsilon }| G_{t-1}\right\rbrace}\\
%& & \le  \sum_{t=1}^n{E^{\frac{2}{3}}\left\lbrace [n^{-\frac{1}{2}}a_t(\eta)]^3 |G_{t-1}\right\rbrace E^{\frac{1}{3} }\left[\nbI_{|n^{-\frac{1}{2}}a_t(\eta)|>\epsilon} | G_{t-1}\right]}\\
%\sum_{t=1}^n{E^{\frac{2}{3}}\left\lbrace [n^{-\frac{1}{2}}a_t(\eta)]^3 |G_{t-1}\right\rbrace P^{\frac{1}{3} }\left[|n^{-\frac{1}{2}}a_t(\eta)|>\epsilon | G_{t-1}\right]}.
% \end{eqnarray*}
%From Markov inequality, we have 
%\begin{eqnarray*}
%&&\sum_{t=1}^n{E\left\lbrace[n^{-\frac{1}{2}}a_t(\eta)]^2\nbI_{|n^{-\frac{1}{2}}a_t(\eta)|>\epsilon }| G_{t-1}\right\rbrace} \\
%&& \le  \sum_{t=1}^n{E^{\frac{2}{3}}\left\lbrace [n^{-\frac{1}{2}}a_t(\eta)]^3 |G_{t-1}\right\rbrace \dfrac{E^{\frac{1}{3} }\left[|n^{-\frac{1}{2}}a_t(\eta)|^3>\epsilon | G_{t-1}\right]}{\epsilon^{\frac{1}{3}}}}\\
%&\leq & Cst\times  n^{-\frac{3}{2}}\sum_{t=1}^n{E^{\frac{2}{3}}\left\lbrace [a_t(\eta)]^3 |G_{t-1}\right\rbrace E^{\frac{1}{3}}\left[ |a_t(\eta)|^3>\epsilon | G_{t-1}\right]}\\
%&\leq &  Cst\dfrac{1}{\sqrt{n}}\sum_{\ell=1}^2{\sum_{j=1}^{k+1}{\dfrac{n_j(n)}{n}\dfrac{1}{n_j(n)}\sum_{t=t_{j-1}}^{t_j}{||\eta^{\top}_\ell\Psi_\ell(Z_{t-1},\psi_0)||^3}}}\\
%&\times &  E\lbrace|\Omega_\ell[\varepsilon_t(\psi_0,\gamma_0)]| ^3\rbrace.
%\end{eqnarray*}
%From the ergodic theorem, we have that, as $n\rightarrow\infty,$ the term in the right-hand side of the above inequality tends almost surely to 0.
%
Hence, we deduce from Corollary 3.1 of \cite{hall} that, under $H_0,$ 
$$\eta^{\top}\sqrt{n}(\psi_n-\psi_0)\overset{D}{\longrightarrow}\mathcal{N}(0,\xi), \ n\rightarrow\infty,$$
from which, it results that, under $H_0,$ as $n\rightarrow\infty,$
$$\sqrt{n}(\psi_n-\psi_0)\overset{D}{\longrightarrow}\mathcal{N}(\mathbf{0},\Sigma)$$
%where 
$$\Sigma=\begin{pmatrix} 
\Sigma_{11}& \Sigma_{12} \\ 
\Sigma_{21}  & \Sigma_{22}
\end{pmatrix}$$
with $\Sigma_{11},$ $\Sigma_{12},$ $\Sigma_{21}$ and $\Sigma_{22}$ defined in the proposition.
\smallskip

\noindent 
(ii)
Under $H_0,$ we have by the part (i) and Theorem \ref{th1}, that as $n\rightarrow\infty,$
$$\sqrt{n}(\psi_n-\psi_0)\overset{D}{\longrightarrow}\mathcal{N}(\mathbf{0},\Sigma),$$
$$\Lambda_n(\psi_0,\gamma_0,\beta)\overset{D}{\longrightarrow}\mathcal{N}\left(-\dfrac{1}{2}\mu(\psi_0,\gamma_0,\beta),\mu(\psi_0,\gamma_0,\beta)\right),$$
where we recall that 
$$\mu(\psi_0,\gamma_0,\beta)=\sum_{j=1}^{k+1}{\alpha_j \beta^2_j \mu_{j2}(\psi_0,\gamma_0)}, $$
with 
$\mu_{j2}(\psi_0,\gamma_0)=I(f){\int_{\mathbb R^p}{\dfrac{1}{V_{\theta_0}^2(x)}}dF_j(x)},  j=1,\ldots,k+1$.
\smallskip

\noindent
Letting $S_n=\sqrt{n}(\psi_n-\psi_0)$, it results that under $H_0,$ $(S^{\top}_n,\Lambda_n(\psi_0,\gamma_0,\beta))^{\top}$ follows asymptotically a normal distribution with expectation $\left(\textbf{0}^{\top},-\dfrac{1}{2}\mu(\psi_0,\gamma_0,\beta)\right)^{\top}$ and covariance vector  
\begin{eqnarray*}
\lim_{n\rightarrow\infty}{\mathbb C\mbox{ov}(S_n,\Lambda_n(\psi_0,\gamma_0,\beta))}&=&\lim_{n\rightarrow\infty}{\mathbb C\mbox{ov}\left(S_n,\Delta_n(\psi_0,\gamma_0,\beta)-\dfrac{1}{2}\mu(\psi_0,\gamma_0,\beta)\right)}\\
&=& \lim_{n\rightarrow\infty}{\mathbb C\mbox{ov}\left(S_n,\Delta_n(\psi_0,\gamma_0,\beta)\right)}.
\end{eqnarray*}
Using the expression of $ \Delta_n (\psi_0, \gamma_0) $, we get 
\begin{eqnarray*}
\mathbb C\mbox{ov}(S_n,\Lambda_n(\psi_0,\gamma_0,\beta)&=&\dfrac{1}{\sqrt{n}}\sum_{t=1}^n{\beta^{\top}\omega(t) \mathbb E\left\lbrace S_n \dfrac{\phi_f[\varepsilon_t(\psi_0,\gamma_0)]}{V_{\theta_0}(Z_{t-1})}\right\rbrace}
\\&-&
\dfrac{1}{\sqrt{n}} \sum_{t=1}^n{\beta^{\top}\omega(t) \mathbb E(S_n) \mathbb E\left\lbrace\dfrac{\phi_f[\varepsilon_t(\psi_0,\gamma_0)]}{V_{\theta_0}(Z_{t-1})}\right\rbrace}\\
&=& C_{1n}+C_{2n}.
\end{eqnarray*}
%\begin{eqnarray*}
%Cov(S_n,\Lambda_n(\psi_0,\gamma_0,\beta))&=& Cov\left(S_n,\dfrac{1}{\sqrt{n}}\sum_{t=1}^n{\dfrac{\beta^{\top}\omega(t)}{\sigma(Z_{t-1},\theta_0)}\phi_f[\varepsilon_t(\psi_0,\gamma_0)]}\right)\\
%&=& \dfrac{1}{\sqrt{n}}\sum_{t=1}^n{\beta^{\top}\omega(t) Cov\left(S_n,\dfrac{\phi_f[\varepsilon_t(\psi_0,\gamma_0)]}{\sigma(Z_{t-1},\theta_0)}\right)}\\
%&=&  \dfrac{1}{\sqrt{n}}\sum_{t=1}^n{\beta^{\top}\omega(t) \left\lbrace E\left[ S_n \dfrac{\phi_f(\varepsilon_t(\psi_0,\gamma_0))}{\sigma(Z_{t-1},\theta_0)}\right]-E(S_n) E\left[\dfrac{\phi_f(\varepsilon_t(\psi_0,\gamma_0))}{\sigma(Z_{t-1},\theta_0)}\right]  \right\rbrace}.
%\end{eqnarray*}
Since $\mathbb E\lbrace \phi_f[\varepsilon_t(\psi_0,\gamma_0)]\rbrace=0,$ $t\in\mathbb Z,$ it follows that $C_{2n}= \mathbf{0}$, $\forall n.$ 
\smallskip

\noindent
Now,
\begin{eqnarray*}
C_{1n}&=& \dfrac{1}{{n}}\sum_{t=1}^n{\beta^{\top}\omega(t)}\mathbb E\left\{\Psi (Z_{t-1},\psi_0) \Omega^{\top}[\varepsilon_t(\psi_0,\gamma_0)]\dfrac{\phi_f[\varepsilon_t(\psi_0,\gamma_0)]}{V_{\theta_0}(Z_{t-1})}\right\}\\
&+& \dfrac{1}{{n}}\sum_{s,t=1,s\ne t}^n{\beta^{\top}\omega(t)}\mathbb E\left\{\Psi (Z_{s-1},\psi_0) \Omega^{\top}[\varepsilon_s(\psi_0,\gamma_0)]\dfrac{\phi_f[\varepsilon_t(\psi_0,\gamma_0)]}{V_{\theta_0}(Z_{t-1})}\right\}\\
&=& C_{11n}+C_{12n}.
\end{eqnarray*}
%\begin{eqnarray*}
%C_{1n}&=&  \dfrac{1}{\sqrt{n}}\sum_{t=1}^n{\beta^{\top}\omega(t)  E\left[ S_n \dfrac{\phi_f(\varepsilon_t(\psi_0,\gamma_0))}{\sigma(Z_{t-1},\theta_0)}\right]}\\
%&=&  \dfrac{1}{{n}}\sum_{t=1}^n{\beta^{\top}\omega(t) }\sum_{s=1}^n{} E\left [ \Psi (Z_{s-1},\psi_0) \Omega(\varepsilon_s(\psi_0,\gamma_0))\dfrac{\phi_f[\varepsilon_t(\psi_0,\gamma_0)]}{\sigma(Z_{t-1},\theta_0)}\right]\\
%&=& \dfrac{1}{{n}}\sum_{t=1}^n{\beta^{\top}\omega(s)}E\left[\Psi (Z_{t-1},\psi_0) \Omega(\varepsilon_t(\psi_0,\gamma_0))\dfrac{\phi_f[\varepsilon_t(\psi_0,\gamma_0)]}{\sigma(Z_{t-1},\theta_0)}\right]\\
%&+& \dfrac{1}{{n}}\sum_{s,t=1,s\ne t}^n{\beta^{\top}\omega(t)}E\left[\Psi (Z_{s-1},\psi_0) \Omega(\varepsilon_s(\psi_0,\gamma_0))\dfrac{\phi_f[\varepsilon_t(\psi_0,\gamma_0)]}{\sigma(Z_{t-1},\theta_0)}\right]
%\end{eqnarray*}
%The second term of the last equality is equal to zero. Endeed, 
%since for any $t\in\mathbb Z,$  $\varepsilon_t$ are independent to $G_{t-1}=\sigma(Z_1,\ldots,Z_t)$, then
%\begin{eqnarray*}
%& & \dfrac{1}{{n}}\sum_{s,t=1,s\ne t}^n{\beta^{\top}\omega(t)}E\left\lbrace\Psi (Z_{s-1},\psi_0) \Omega(\varepsilon_s(\psi_0,\gamma_0))\dfrac{\phi_f[\varepsilon_t(\psi_0,\gamma_0)]}{\sigma(Z_{t-1},\theta_0)}\right\rbrace \\
%&=&\dfrac{1}{{n}}\sum_{t,s=1,t\ne s}^n{\beta^{\top}\omega(t)}E\left(\dfrac{\Psi (Z_{s-1},\psi_0)}{\sigma(Z_{t-1},\theta_0)}\right)  E\lbrace \Omega[\varepsilon_s(\psi_0,\gamma_0)]\rbrace E\lbrace{\phi_f[\varepsilon_t(\psi_0,\gamma_0)]}\rbrace \\
%&=&0. 
%\end{eqnarray*}
Since $\mathbb E\lbrace\Omega_1[\varepsilon_t(\psi_0,\gamma_0)]\rbrace=\mathbb E\lbrace\Omega_2[\varepsilon_t(\psi_0,\gamma_0)]\rbrace=\mathbb E\lbrace\phi_f[\varepsilon_t(\psi_0,\gamma_0)]\rbrace)=0$ for all $t\in\mathbb Z,$ and since $(\varepsilon_t)_{t\in\mathbb Z}$ is iid and for any $t\in\mathbb Z,$  $\varepsilon_t$ is independent of $G_{t-1}=\sigma(Z_1,\ldots,Z_t)$, it follows that $C_{12n}=\mathbf{0}.$
\smallskip

%The last equality results from the fact that  $t\in\mathbb Z,$ $E( \Omega(\varepsilon_t(\psi_0,\gamma_0)))=0$ and for any  $t\in\mathbb Z,$ $ E({\phi_f(\varepsilon_t(\psi_0,\gamma_0))})=0.$\\
\noindent
For the study of $C_{11n},$ we have 
\begin{eqnarray*}
C_{11n}&=& \int_{\mathbb R}{ \Omega^{\top}(x)\phi_f(x)f(x)}dx\dfrac{1}{{n}}\sum_{t=1}^n{\beta^{\top}\omega(t)}\mathbb E\left[ \dfrac{\Psi (Z_{t-1},\psi_0)}{V_{\theta_0}(Z_{t-1})}\right]\\
&=& \int_{\mathbb R}{ \Omega^{\top}(x)\phi_f(x)f(x)}dx}\sum_{j=1}^{k+1}{\dfrac{n_j(n)}{n}\dfrac{\beta_j}{n_j(n)}\sum_{t=t_{j-1}}^{t_j}{\mathbb E\left[ \dfrac{\Psi (Z_{t-1},\psi_0)}{V_{\theta_0}(Z_{t-1})}\right] }.
\end{eqnarray*}
Letting $n\rightarrow\infty,$ it is easy to see that the right member of the above equality tends almost surely to $$
\int_{\mathbb R}{ \Omega^{\top}(x)\phi_f(x)f(x)}dx\sum_{j=1}^{k+1}{\alpha_j \beta_j}\int_{\mathbb R^p}{\dfrac{\Psi(x,\psi_0)}{V_{\theta_0}(x)}}dF_j(x).$$
Then, under $H_0,$ as $n\rightarrow\infty,$
$$\mathbb C\mbox{ov}(S_n,\Lambda_n(\psi_0,\gamma_0,\beta))\overset{}{\longrightarrow} \nu=\int_{\mathbb R}{ \Omega^{\top}(x)\phi_f(x)f(x)}dx\sum_{j=1}^{k+1}{\alpha_j \beta_j}\displaystyle\int_{\mathbb R^p}{\dfrac{\Psi(x,\psi_0)}{V_{\theta_0}(x)}}dF_j(x).$$
Hence, under $H_0$, as $n\rightarrow\infty,$
$$\begin{pmatrix} 
S_n \\ 
\Lambda_n(\psi_0,\gamma_0,\beta)
\end{pmatrix}\overset{D}{\longrightarrow} \mathcal{N}\left( \begin{pmatrix} 
\mathbf{0}\\ 
-\dfrac{1}{2}\mu(\psi_0,\gamma_0,\beta)
\end{pmatrix}, \begin{pmatrix} 
\Sigma & \nu^{\top} \\ 
\nu & \mu(\psi_0,\gamma_0,\beta)
\end{pmatrix} \right).$$
By Le Cam's third lemma, under $H_1^{(n)}$,  as $n\rightarrow\infty,$
$$S_n=\sqrt{n}(\psi_n-\psi_0)\overset{D}{\longrightarrow}\mathcal{N}(\nu,\Sigma).$$
\subsection{Proof of Proposition \ref{pr2}}
For any $\psi=(\rho^{\top},\theta^{\top})^{\top}\in \Theta \times \widetilde{\Theta}$ and $(\gamma,\beta)\in\mathbb R^{k+1}\times \mathbb R^{k+1},$ define
$$ \Lambda_n(\psi,\gamma,\beta)=\sum_{t=1}^n{\log\left\lbrace f\left[\varepsilon_t\left(\psi,\gamma+\dfrac{\beta}{\sqrt{n}}\right)\right]\right\rbrace-\log\left\lbrace f\varepsilon_t\left[(\psi,\gamma)\right]\right\rbrace}+o_P(1).$$
Then, the log-likelihood ratio of $H_0$ against $H^{(n)}_{\beta}$ is $\Lambda_n(\psi_0,\gamma_0,\beta).$
%\smallskip
%
%
%\noindent
Writing a first-order Taylor expansion of $\Delta_n(.,\gamma_0,\beta)$ around $\psi_n$, we have, for some $\widetilde{\psi}_n$ lying between $\psi_n$ and $\psi_0.$
\begin{eqnarray} 
\Delta_n(\psi_0,\gamma_0,\beta)&=&\Delta_n(\psi_n,\gamma_0,\beta)+(\psi_0-\psi_n)^{\top}\partial_{\psi}\Delta_n({\psi}_n,\gamma_0,\beta) \nonumber \\
&&+\dfrac{1}{2}(\psi_0-\psi_n)^{\top}\partial_{\psi}^2 \Delta_n(\widetilde{\psi}_n,\gamma_0,\beta)(\psi_0-\psi_n). \notag
\end{eqnarray}
First, we show that, under $H_0$, as $n\rightarrow\infty,$
\begin{equation} \label{ps}
(\psi_0-\psi_n)^{\top}\partial_{\psi}^2 \Delta_n(\widetilde{\psi}_n,\gamma_0,\beta)(\psi_0-\psi_n)=o_P(1).
\end{equation}
%Denote by $ ||A||_M=\max_{1\leq i \leq n}{\sum_{j=1}^n{|a_{ij}|}},$ the norm of any square matrix $A=(a_{ij})_{1\leq i,j\leq n},$ and recall that $||V||$ denotes the Euclidean norm of any $ V $ vector. 
For this, we observe that  
\begin{eqnarray*}
&& \left| (\psi_0-\psi_n)^{\top}\partial_{\psi}^2 \Delta_n(\widetilde{\psi}_n,\gamma_0,\beta)(\psi_0-\psi_n) \right|
%\\ && 
\leq || \sqrt{n}(\psi_0-\psi_n)||\times \dfrac{1}{\sqrt{n}}||\partial_{\psi}^2 \Delta_n(\widetilde{\psi}_n,\gamma_0,\beta)||_M \times ||\psi_0-\psi_n||.
\end{eqnarray*}
From Proposition \ref{pr1}, we have that, under $H_0$, as $n\rightarrow\infty,$
$\sqrt{n} (\psi_0-\psi_n)$ converges in distribution to a normal distribution and $\psi_0-\psi_n$ tends to $\mathbf{0}$ in probability as $n\rightarrow\infty$. 
So (\ref{ps}) would be handled if 
%to show that $ (\psi_0-\psi_n)^{\top}\partial_{\psi}^2 \Delta_n(\widetilde{\psi}_n,\gamma_0,\beta)(\psi_0-\psi_n)=o_P(1)$, 
we show that $||\partial_{\psi}^2\Delta_n(\widetilde{\psi}_n,\gamma_0,\beta)||_M/{\sqrt{n}}$ tends in probability to some positive random variable.
\smallskip

\noindent
To get this, we define for any $(\psi,\gamma,\beta)\in \mathbb R^{l+q} \times \mathbb R^{k+1} \times  \mathbb R^{k+1},$ 
$$D_n(\psi,\gamma,\beta)=\dfrac{1}{\sqrt{n}}\partial_{\psi}^2 \Delta_n(\psi,\gamma,\beta)=
 \begin{pmatrix} 
D_{n,1,1}(\psi,\gamma,\beta)&D_{n,1,2}(\psi,\gamma,\beta)\\ 
  D_{n,1,2}^{\top}(\psi,\gamma,\beta)& D_{n,2,2}(\psi,\gamma,\beta)
 \end{pmatrix},$$
%
% where
 \begin{eqnarray*}
 D_{n,1,1}(\psi,\gamma,\beta)& := & \dfrac{1}{\sqrt{n}}\partial_{\rho}^2\Delta_n({\psi},\gamma,\beta)\\
 D_{n,1,2}(\psi,\gamma,\beta)& := & \dfrac{1}{\sqrt{n}}\partial_{\rho \theta}^2\Delta_n({\psi},\gamma,\beta)\\
% D_{n,2,1}(\psi,\gamma,\beta)& := & \dfrac{1}{\sqrt{n}}\partial_{\rho \theta}^2\Delta_n({\psi},\gamma,\beta)\\
  D_{n,2,2}(\psi,\gamma,\beta)& := & \dfrac{1}{\sqrt{n}}\partial_{\theta}^2\Delta_n({\psi},\gamma,\beta).
 \end{eqnarray*}
%
%
%%%%%%%%%%%%%%%%%%%%%%%%%%%%%%%%%%%%%%%%%%%%%%%%%%%%%%%%%%%%
\noindent 
From  a lengthy but simple algebra, by ergodicity, it is easy to show that  
$||D_{n,1,1}(\widetilde{\psi}_n,\gamma_0,\beta)||_M,$ $||D_{n,2,2}(\widetilde{\psi}_n,\gamma_0,\beta)||_M$ and $||D_{n,1,2}(\widetilde{\psi}_n,\gamma_0,\beta)||_M$ converge in probability to some positive numbers.
It results from all these convergences that $||\partial_{\psi}^2\Delta_n(\widetilde{\psi}_n,\gamma_0,\beta)||_M/{\sqrt{n}}$ tends in probability to some positive number. Thus (\ref{ps}) is obtained.
%Thus, under $H_0,$ as $n\rightarrow\infty,$
%$$(\psi_0-\psi_n)^{\top}\partial_{\psi}^2 \Delta_n(\widetilde{\psi}_n,\gamma_0,\beta)(\psi_0-\psi_n)=o_P(1),$$

\noindent
Now, as $n\rightarrow\infty,$ adding and subtracting appropriate terms, we can write 
\begin{eqnarray*}
\Delta_n(\psi_0,\gamma_0,\beta)
%&=& \Delta_n(\psi_n,\gamma_0,\beta)+(\psi_0-\psi_{n})^{\top}\partial \Delta_n({\psi}_n,\gamma_0,\beta)+o_P(1)\\
&=& \Delta_n(\psi_n,\gamma_0,\beta)+(\psi_0-\psi_{N(n)})^{\top}\partial_{\psi} \Delta_n({\psi}_n,\gamma_0,\beta)\\
&+&(\psi_{N(n)}-\psi_n)^{\top}\partial_{\psi} \Delta_n({\psi}_n,\gamma_0,\beta)+o_P(1).
\end{eqnarray*}
Observing that, as $n\rightarrow\infty,$ %$$(\psi_0-\psi_{N(n)})^{\top}\partial \Delta_n({\psi}_n,\gamma_0,\beta)=\sqrt{n}(\psi_0-\psi_{N(n)})^{\top}\dfrac{1}{\sqrt{n}}\partial \Delta_n({\psi}_n,\gamma_0,\beta),$$
 $$\sqrt{n}(\psi_0-\psi_{N(n)})=\sqrt{N(n)}(\psi_0-\psi_{N(n)})\dfrac{\sqrt{n}}{\sqrt{N(n)}}=o_P(1),$$
it is easy to see that $\partial_{\psi} \Delta_n({\psi}_n,\gamma_0,\beta)/{\sqrt{n}}$ converges in probability to some random vector. So, we have proved that, as $n\rightarrow\infty,$ $$(\psi_0-\psi_{N(n)})^{\top}\partial_{\psi} \Delta_n({\psi}_n,\gamma_0,\beta)=\sqrt{n}(\psi_0-\psi_{N(n)})^{\top}\dfrac{1}{\sqrt{n}} \Delta_n({\psi}_n,\gamma_0,\beta)=o_P(1).$$
Consequently, as $n\rightarrow\infty,$
$$\Delta_n(\psi_0,\gamma_0,\beta)= \Delta_n(\psi_n,\gamma_0,\beta)+(\psi_{N(n)}-\psi_n)^{\top}\partial_{\psi} \Delta_n({\psi}_n,\gamma_0,\beta)+o_P(1).$$
For ending the proof of Proposition \ref{pr2}, we need the following lemma.
\begin{lemma} \label{lm1}
Let $\psi_n$ be a consistent and asymptotically normal estimator of $\psi_0$. For $\gamma_0,\beta\in\mathbb R^{k+1},$ 
$ \psi_{N(n)}$ is asymptotically in the tangent space  $\Gamma_n$ to the curve of $\Delta_n(\psi,\gamma_0,\beta)$ at $\psi_n$, defined as follows:
\begin{equation*}
\Gamma_n:=\left\lbrace x\in\mathbb R^{l+q},\Delta_n(x,\gamma_0,\beta)=\Delta_n(\psi_n,\gamma_0,\beta)+(x-\psi_n)\partial_{\psi}\Delta_n({\psi}_n,\gamma_0,\beta)\right\rbrace,
\end{equation*}
where $\lbrace N(n)\rbrace_{n\geq 1}$ stands for a subset $\lbrace 1,\ldots,n\rbrace$ such that $n/N(n),$ tends to 0 as $n$ tends to infinity.
\end{lemma}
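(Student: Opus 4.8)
The statement is to be read as: the affine relation defining $\Gamma_n$ holds at $x=\psi_{N(n)}$ modulo an $o_P(1)$ error, i.e.
$$\Delta_n(\psi_{N(n)},\gamma_0,\beta)=\Delta_n(\psi_n,\gamma_0,\beta)+(\psi_{N(n)}-\psi_n)^{\top}\partial_{\psi}\Delta_n(\psi_n,\gamma_0,\beta)+o_P(1),$$
which is exactly what remains to be shown to close the proof of Proposition \ref{pr2}. The plan is to Taylor-expand $\Delta_n(\cdot,\gamma_0,\beta)$ at $\psi_n$ to second order and to show that the quadratic remainder is negligible: there is $\overline{\psi}_n$ lying between $\psi_n$ and $\psi_{N(n)}$ with
$$\Delta_n(\psi_{N(n)},\gamma_0,\beta)=\Delta_n(\psi_n,\gamma_0,\beta)+(\psi_{N(n)}-\psi_n)^{\top}\partial_{\psi}\Delta_n(\psi_n,\gamma_0,\beta)+\tfrac12(\psi_{N(n)}-\psi_n)^{\top}\partial_{\psi}^2\Delta_n(\overline{\psi}_n,\gamma_0,\beta)(\psi_{N(n)}-\psi_n),$$
so the lemma reduces to $R_n:=\tfrac12(\psi_{N(n)}-\psi_n)^{\top}\partial_{\psi}^2\Delta_n(\overline{\psi}_n,\gamma_0,\beta)(\psi_{N(n)}-\psi_n)=o_P(1)$.

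Exactly as in the proof of Proposition \ref{pr2}, I would bound
$$|R_n|\le \tfrac12\,\|\sqrt{n}(\psi_{N(n)}-\psi_n)\|\cdot\frac{\|\partial_{\psi}^2\Delta_n(\overline{\psi}_n,\gamma_0,\beta)\|_M}{\sqrt{n}}\cdot\|\psi_{N(n)}-\psi_n\|,$$
and check that the three factors are $O_P(1)$, $O_P(1)$ and $o_P(1)$. For the first factor, $\sqrt{n}(\psi_{N(n)}-\psi_n)=\sqrt{n}(\psi_{N(n)}-\psi_0)-\sqrt{n}(\psi_n-\psi_0)$; the second piece is $O_P(1)$ by Proposition \ref{pr1}, and the first piece equals $\dfrac{\sqrt{n}}{\sqrt{N(n)}}\,\sqrt{N(n)}(\psi_{N(n)}-\psi_0)=o_P(1)$ since $n/N(n)\to 0$ while $\sqrt{N(n)}(\psi_{N(n)}-\psi_0)=O_P(1)$ (Proposition \ref{pr1} at sample size $N(n)$). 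The third factor tends to $0$ in probability by consistency of $\psi_n$ and $\psi_{N(n)}$. For the second factor I would reuse the block decomposition $D_n(\psi,\gamma_0,\beta)=n^{-1/2}\partial_{\psi}^2\Delta_n(\psi,\gamma_0,\beta)$ introduced in the proof of Proposition \ref{pr2}: consistency forces $\overline{\psi}_n\to\psi_0$ in probability, hence with probability tending to $1$ the point $\overline{\psi}_n$ lies in $\overline{B}(\rho_0,r_1)\times\overline{B}(\theta_0,r_2)$, on which $(B_3)$ provides an envelope $\vartheta$ with $\int_{\mathbb R^p}\vartheta^3(x)\,dF_j(x)<\infty$; the ergodic argument carried out there for $\widetilde{\psi}_n$ then yields $\sup_{\psi\in\overline{B}(\rho_0,r_1)\times\overline{B}(\theta_0,r_2)}\|D_n(\psi,\gamma_0,\beta)\|_M=O_P(1)$, so $\|\partial_{\psi}^2\Delta_n(\overline{\psi}_n,\gamma_0,\beta)\|_M/\sqrt{n}=O_P(1)$. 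Multiplying the three bounds gives $R_n=o_P(1)$, which is the lemma.

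The one genuinely delicate point is this uniform control of $\|D_n(\psi,\gamma_0,\beta)\|_M$ over a neighborhood of $\psi_0$, needed because $\overline{\psi}_n$ is random; it amounts to differentiating $\Delta_n$ twice in $\psi=(\rho^{\top},\theta^{\top})^{\top}$ — applying the chain rule to $\varepsilon_t(\psi,\gamma_0)$ in (\ref{epsi1}), using the Lipschitz derivative $\phi'_f$ of $(A_2)$ together with the derivatives of $T_\rho$ and $V_\theta$ — and dominating each resulting normalized sum by a sum of terms of the form $\vartheta^{j}(Z_{t-1})\,g(\varepsilon_t(\psi_0,\gamma_0))$ with $g$ one of $1$, $|\phi_f|$, $x\mapsto|x\phi'_f(x)|$, before invoking a uniform ergodic theorem backed by the moment conditions $(A_4)$, $(B_1)$--$(B_4)$. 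This is precisely the lengthy but simple algebra by ergodicity already performed for $\widetilde{\psi}_n$ in the proof of Proposition \ref{pr2}, so I would simply refer to it; the remaining ingredients (the Taylor expansion and the rate comparison $n/N(n)\to 0$) are entirely routine.
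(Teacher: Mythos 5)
Your proposal is correct and follows essentially the same route as the paper's own proof: a second-order Taylor expansion of $\Delta_n(\cdot,\gamma_0,\beta)$ around $\psi_n$, the decomposition $\sqrt{n}(\psi_{N(n)}-\psi_n)=\sqrt{N(n)}(\psi_{N(n)}-\psi_0)\sqrt{n}/\sqrt{N(n)}+\sqrt{n}(\psi_0-\psi_n)$ exploiting $n/N(n)\to 0$, and the reuse of the control of $\Vert\partial_{\psi}^2\Delta_n(\cdot,\gamma_0,\beta)\Vert_M/\sqrt{n}$ already established in the proof of Proposition \ref{pr2} to kill the quadratic remainder. Your added remark on needing the Hessian bound uniformly near $\psi_0$ (since the intermediate point is random) is a slight tightening of the paper's argument, not a different method.
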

\begin{proof}
writing a second-order Taylor expansion of $\Delta(\cdot, \gamma_0, \beta)$ around $\psi_n$, we have, for some $\widetilde{\psi}_n$ lying  between $\psi_{N(n)}$ and $\psi_n$,
\begin{eqnarray*}
\Delta_n(\psi_{N(n)},\gamma_0,\beta)&=&\Delta_n(\psi_n,\gamma_0,\beta)+(\psi_{N(n)}-\psi_n)^{\top}\partial_{\psi}\Delta_n(\psi_n,\gamma_0,\beta)\\
&+&\dfrac{1}{2}(\psi_{N(n)}-\psi_n)^{\top}\partial_{\psi}^2\Delta_n(\widetilde{\psi}_{N(n)},\gamma_0,\beta) (\psi_{N(n)}-\psi_n).
\end{eqnarray*}
To show that the sequence $\psi_{N(n)}$ is asymptotically in $\Gamma_n$, we just have to  show that $(\psi_{N(n)}-\psi_n)^{\top}\partial_{\psi}^2\Delta_n(\widetilde{\psi}_{N(n)},\gamma_0,\beta) (\psi_{N(n)}-\psi_n)=o_P(1).$ But since $n/N(n)\longrightarrow 0$ as $n\rightarrow\infty,$ we have 
\begin{eqnarray*}
\sqrt{n}(\psi_{N(n)}-\psi_n)%&=& \sqrt{n}(\psi_{N(n)}-\psi_0)+\sqrt{n}(\psi_0-\psi_n)\\
&=& \sqrt{N(n)}(\psi_{N(n)}-\psi_0)\dfrac{\sqrt{n}}{\sqrt{N(n)}}+\sqrt{n}(\psi_0-\psi_n)
\\&=& 
o_P(1)+\sqrt{n}(\psi_0-\psi_n).
\end{eqnarray*}
This implies that $\sqrt{n}(\psi_{N(n)}-\psi_n)$ converges in distribution to a Gaussian random vector.
So, to show that $(\psi_{N(n)}-\psi_n)^{\top}\partial_{\psi}^2\Delta_n(\widetilde{\psi}_{N(n)},\gamma_0,\beta) (\psi_{N(n)}-\psi_n)=o_P(1)$, we just show that the sequence $||\partial_{\psi}^2\Delta_n(\widetilde{\psi}_{N(n)},\gamma_0,\beta)||/\sqrt{n}$ converges in probability to some positive random variable. In this purpose, observing that 
$\widetilde{\psi}_{N(n)}-\psi_0=o_P(1),$
as we have shown previously that
$||\partial_{\psi}^2\Delta_n(\widetilde{\psi}_n,\gamma_0,\beta)||_M/\sqrt{n}$ converges in probability to some random variable, it follows that
$||\partial_{\psi}^2\Delta_n(\widetilde{\psi}_{N(n)},\gamma_0,\beta)||_M/\sqrt{n}$  converges in probability to some positive random variable.
Consequently,
$$(\psi_{N(n)}-\psi_n)^{\top}\partial_{\psi}^2\Lambda_n(\widetilde{\psi}_{N(n)},\gamma_0,\beta) (\psi_{N(n)}-\psi_n)=o_P(1).$$
This ends the proof of the lemma.
\end{proof}
%\bigskip
%
%\noindent This results from a Taylor expansion of order one of $\Delta_n({\psi}_n,\gamma_0,\beta)$ at $\psi_{N(n)}$ and the fact that the corresponding matrices $D_n$ tend in probability to some random matrix.\\
%

\noindent
Now, by Lemma \ref{lm1} as $n\rightarrow\infty,$ we have 
  $$\Delta_n(\psi_{N(n)},\gamma_n)=\Delta_n(\psi_n,\gamma_0,\beta)+(\psi_{N(n)}-\psi_n)^{\top}\partial_{\psi} \Delta_n({\psi}_n,\gamma_0,\beta)+o_P(1).$$
Therefore, from the equality before the statement of  Lemma \ref{lm1}, we have that under $H_0$, as $n\rightarrow\infty,$ 
$$\Delta_n(\psi_0,\gamma_0,\beta)=\Delta_n(\psi_{N(n)},\gamma_0,\beta)+o_P(1).$$
This ends the proof the first part of Proposition \ref{pr2}.
\medskip

\noindent
For the second part, recall that for all $j=1, \ldots,{k+1}$,
$$ \mu_{j2}(\psi_0, \gamma_0)= I(f) \int_{\mathbb R^p} {1 \over V_{\theta_0}^2(x)} dF_j(x).$$
Now, adding and subtracting appropriate terms, we have 
\begin{eqnarray*}
\widehat \mu(\psi_n, \gamma_0)-\mu(\psi_0, \gamma_0)
&=&
\sum_{j=1}^{k+1} \beta_j^2\left \{ (\widehat \alpha_j-\alpha_j)[\widehat \mu_{j2}(\psi_n, \gamma_0) - \mu_{j2}(\psi_0, \gamma_0)] \right \}\\
&+& 
\sum_{j=1}^{k+1} \beta_j^2\left \{ \alpha_j[\widehat \mu_{j2}(\psi_n, \gamma_0) - \mu_{j2}(\psi_0, \gamma_0)] \right \}.
\end{eqnarray*}
Remembering that for all $j=1,\ldots k+1$,  $\widehat \alpha_j-\alpha_j$ tends to 0 as $n$ tends to infinity, 
by the continuity of the function $(x, \theta) \mapsto V_{\theta}(x)$ and the fact that it is bounded from the bottom by some positive number $\tau$, it follows from the Lebesgue's convergence theorem that each term in the above sum tends to 0. This handles the second part of Proposition \ref{pr2}.

\subsection{Proof of Theorem \ref{th3}}
\noindent We recall that 
 $$ \mathcal{T}_n(\psi_{N(n)},{\gamma}_{0},\beta)=  \dfrac{\Delta_n(\psi_{N(n)},\gamma_0,\beta)}{\widehat{\varpi}_n(\psi_{N(n)},\gamma_0,\beta)}.$$
From Proposition \ref{pr2}, under $H_0$, as $n\rightarrow\infty,$
$\Delta_n(\psi_0,\gamma_0,\beta)=\Delta_n(\psi_{N(n)},{\gamma}_{0},\beta)+o_P(1)$ 
and in probability, $\widehat{\varpi}_n(\psi_{N(n)},\gamma_0,\beta)\longrightarrow  \varpi(\psi_0,\gamma_0,\beta).$  
%By contiguity, these results still hold under $H_1^{(n)}$.
%
Hence the part $(i)$ follows from Theorem \ref{th1} and Proposition \ref{pr2}.

For the part $(ii)$, we first observe that by contiguity, the above asymptotics still hold under $H_1^{(n)}$. Next, from an adaptation of the proof of Corollary \ref{cr1}, we can see that under $H_1^{(n)},$ as $n\rightarrow\infty,$
$$\dfrac{\Delta_n(\psi_0,\gamma_0,\beta)}{\varpi(\psi_0,\gamma_0,\beta)}\overset{D}{\longrightarrow} \mathcal{N}(\varpi(\psi_0,\gamma_0,\beta),1).$$
Now, writing
\begin{eqnarray*}
\mathcal{T}_n(\psi_{N(n)},{\gamma}_{0},\beta)
%&= & \dfrac{\Delta_n(\psi_{0},{\gamma}_{0},\beta)}{\widehat{\varpi}_n(\psi_{N(n)},\gamma_0,\beta)}+\dfrac{1}{\widehat{\varpi}_n(\psi_{N(n)},\gamma_0,\beta)}\times o_P(1)\\
&= &\dfrac{\Delta_n(\psi_{0},{\gamma}_{0},\beta)}{{\varpi}(\psi_{0},\gamma_0,\beta)}\times \dfrac{{\varpi}(\psi_{0},\gamma_0,\beta)}{\widehat{\varpi}_n(\psi_{N(n)},\gamma_0,\beta)}+\dfrac{1}{\widehat{\varpi}_n(\psi_{N(n)},\gamma_0,\beta)}\times o_P(1), 
\end{eqnarray*}
 it is easy to see that, under $H^{(n)}_{\beta},$ as $n\rightarrow \infty,$ $$\mathcal{T}_n(\psi_{N(n)},{\gamma}_{0},\beta)\overset{D}{\longrightarrow} \mathcal{N}(\varpi(\psi_0,\gamma_0,\beta),1).$$ 
Thus, we conclude that the local asymptotic power of the constructed test is preserved. Replacing 
 $\Delta_n(\psi_0,{\gamma}_{0},\beta)$ with its estimated version has no effect. Then, the statistics $\mathcal{T}_n(\psi_{N(n)},{\gamma}_{0},\beta)$ and $\mathcal{T}_n(\psi_0,\gamma_0,\beta)$ are locally asymptotically equivalent. Consequently, the last part of the theorem can be handled as that of Theorem \ref{th2}.
\subsection{Proof of Proposition \ref{pr3}}
\noindent For the proof of the proposition we need the following lemmas.
\begin{lemma} \label{lm2}
Assume $(A_1)$-$(A_7)$, $(B_0)$-$(B_4)$ hold. Then, for any sequence of consistent and asymptotically normal estimators $\lbrace\widehat{\gamma}_{0,n}\rbrace_{n\geq 1}$ of $\gamma_0$, under $H_0$, as $n\rightarrow\infty,$ we have, for any $\beta \in \mathbb R^{k+1}$, 
$$\Delta_n(\psi_0,{\gamma}_0,\beta)=\Delta_n(\psi_0,\widehat{\gamma}_{ 0,N(n)},\beta)+o_P(1),$$
where  $\lbrace N(n)\rbrace_{n\geq 1}$ stands for a subset $\lbrace 1,\ldots,n\rbrace$ such that  $n/N(n)$ tends to 0 as $n$ tends to infinity.
\end{lemma}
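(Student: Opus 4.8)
The plan is to expand $\Delta_n(\psi_0,\cdot,\beta)$, as a function of its $\gamma$-argument, around the \emph{non-random} point $\gamma_0$ and to evaluate at $\widehat\gamma_{0,N(n)}$, then to show that the resulting first-order term is $o_P(1)$. Since $\varepsilon_t(\psi_0,\gamma)$ is affine in $\gamma$ and, by $(A_2)$, $\phi_f$ is differentiable with a continuous ($c_\phi$-Lipschitz) derivative $\phi'_f$, the map $\gamma\mapsto\Delta_n(\psi_0,\gamma,\beta)$ is of class $C^1$ with
$$\partial_\gamma\Delta_n(\psi_0,\gamma,\beta)=-\frac{1}{\sqrt n}\sum_{t=1}^n\frac{\big(\beta^{\top}\omega(t)\big)\,\omega(t)}{V_{\theta_0}^2(Z_{t-1})}\,\phi'_f\big[\varepsilon_t(\psi_0,\gamma)\big].$$
Because $\Delta_n$ is scalar-valued in $\gamma$, the mean value theorem gives exactly
$$\Delta_n(\psi_0,\widehat\gamma_{0,N(n)},\beta)=\Delta_n(\psi_0,\gamma_0,\beta)+\big(\widehat\gamma_{0,N(n)}-\gamma_0\big)^{\top}\partial_\gamma\Delta_n(\psi_0,\widetilde\gamma_n,\beta)$$
for some $\widetilde\gamma_n$ on the segment joining $\gamma_0$ and $\widehat\gamma_{0,N(n)}$, so it suffices to show the last term is $o_P(1)$.

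I would write that term as $\big[\sqrt n(\widehat\gamma_{0,N(n)}-\gamma_0)\big]^{\top}\big[n^{-1/2}\partial_\gamma\Delta_n(\psi_0,\widetilde\gamma_n,\beta)\big]$ and bound the two factors separately. For the first, the assumed asymptotic normality of $\{\widehat\gamma_{0,n}\}$ gives $\sqrt{N(n)}(\widehat\gamma_{0,N(n)}-\gamma_0)=O_P(1)$, hence $\sqrt n(\widehat\gamma_{0,N(n)}-\gamma_0)=\sqrt{n/N(n)}\,\sqrt{N(n)}(\widehat\gamma_{0,N(n)}-\gamma_0)=o_P(1)$ since $n/N(n)\to0$. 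For the second, I would first replace the random point $\widetilde\gamma_n$ by $\gamma_0$: by consistency $\widetilde\gamma_n\to\gamma_0$ in probability, and since $\|\omega(t)\|=1$ and $V_{\theta_0}\ge\tau>0$ by $(B_0)$, the $c_\phi$-Lipschitz property of $\phi'_f$ yields $\big|\phi'_f[\varepsilon_t(\psi_0,\widetilde\gamma_n)]-\phi'_f[\varepsilon_t(\psi_0,\gamma_0)]\big|\le c_\phi\,\tau^{-1}\,\|\widetilde\gamma_n-\gamma_0\|$ uniformly in $t$, so that
$$\frac{1}{\sqrt n}\,\partial_\gamma\Delta_n(\psi_0,\widetilde\gamma_n,\beta)=-\frac{1}{n}\sum_{t=1}^n\frac{\big(\beta^{\top}\omega(t)\big)\,\omega(t)}{V_{\theta_0}^2(Z_{t-1})}\,\phi'_f\big[\varepsilon_t(\psi_0,\gamma_0)\big]+o_P(1),$$
the error coming from $\|\widetilde\gamma_n-\gamma_0\|\to0$ times the bounded average $n^{-1}\sum_t |\beta^{\top}\omega(t)|/V_{\theta_0}^2(Z_{t-1})\le\|\beta\|/\tau^2$. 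Splitting the remaining average over the blocks $[t_{j-1},t_j)$ — on which $\beta^{\top}\omega(t)=\beta_j$ and $\omega(t)$ is the $j$-th coordinate vector — and using that, under $H_0$, $\varepsilon_t(\psi_0,\gamma_0)=\varepsilon_t$ is i.i.d.\ with density $f$ and independent of $G_{t-1}$, that $\mathbb E\{\phi'_f(\varepsilon_t)\}=I(f)$ by Remark~\ref{rm2}(ii), and that the $Z_t$'s are stationary and ergodic on each block by $(A_6)$, the ergodic theorem shows this average converges almost surely to the finite vector with $j$-th component $\alpha_j\beta_j\mu_{j2}(\psi_0,\gamma_0)$ (finiteness from $(A_7)$). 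Hence $n^{-1/2}\partial_\gamma\Delta_n(\psi_0,\widetilde\gamma_n,\beta)=O_P(1)$, and the product of the two factors is $o_P(1)\cdot O_P(1)=o_P(1)$, which gives the lemma after transposing the expansion.

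The main obstacle is the step $n^{-1/2}\partial_\gamma\Delta_n(\psi_0,\widetilde\gamma_n,\beta)=O_P(1)$: since $\widetilde\gamma_n$ is random, the ergodic theorem cannot be applied to the sum evaluated at $\widetilde\gamma_n$ directly, and the device that removes this difficulty — exactly the one used in the proof of Theorem~\ref{th1} — is to invoke the $c_\phi$-Lipschitz continuity of $\phi'_f$ from $(A_2)$ to pass to the deterministic argument $\gamma_0$, the incurred remainder being controlled by the consistency of $\widehat\gamma_{0,N(n)}$ and the boundedness of $1/V_{\theta_0}^2$ under $(B_0)$. One could instead imitate the second-order Taylor argument used for Proposition~\ref{pr2}, but that would require handling $\phi''_f$, which the assumptions only guarantee to exist almost everywhere; the first-order route above sidesteps this and, moreover, needs only $\gamma_0$ rather than an auxiliary estimator $\widehat\gamma_{0,n}$, since the expansion is taken around the true value directly.
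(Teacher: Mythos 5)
Your proof is correct: the mean-value expansion in the $\gamma$-argument is exact for the scalar map $\gamma\mapsto\Delta_n(\psi_0,\gamma,\beta)$, the factor $\sqrt{n}(\widehat{\gamma}_{0,N(n)}-\gamma_0)=\sqrt{n/N(n)}\,\sqrt{N(n)}(\widehat{\gamma}_{0,N(n)}-\gamma_0)=o_P(1)$ is exactly the device the condition $n/N(n)\to 0$ is designed for, and the $O_P(1)$ bound on $n^{-1/2}\partial_\gamma\Delta_n(\psi_0,\widetilde\gamma_n,\beta)$ via the $c_\phi$-Lipschitz replacement of $\widetilde\gamma_n$ by $\gamma_0$ followed by the blockwise ergodic theorem mirrors the treatment of $\Lambda_{2n}$ in the proof of Theorem \ref{th1}. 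However, your route differs from the paper's. The paper disposes of Lemma \ref{lm2} by declaring it ``similar to Proposition \ref{pr2}'', whose proof expands to second order around the full-sample estimator, controls the Hessian term by showing $\|\partial^2\Delta_n\|_M/\sqrt{n}$ converges, splits the first-order increment, and invokes the tangent-space Lemma \ref{lm1} to absorb the non-negligible piece; the companion Lemma \ref{lm3} even brings in $\phi''_f$, which the assumptions only control through the Lipschitz property of $\phi'_f$. You instead expand to first order around the true $\gamma_0$ and evaluate directly at $\widehat{\gamma}_{0,N(n)}$, which works precisely because, unlike in Proposition \ref{pr2} and Lemma \ref{lm3}, no $\sqrt{n}$-rate estimator enters the statement: the only random argument is already at the faster $N(n)$ scale, so the single correction term dies outright. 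What you buy is a shorter argument needing neither second derivatives of $\log f$ nor Lemma \ref{lm1} nor the auxiliary estimator $\widehat{\gamma}_{0,n}$; what the paper's uniform second-order-plus-tangent-space template buys is that the same machinery also covers Lemma \ref{lm3}, where the first-order term at the $n$-indexed estimator is genuinely not negligible and your shortcut would not apply.
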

\begin{proof}
The proof is similar to that of Proposition \ref{pr2}.
\end{proof}
\begin{lemma} \label{lm3}
\noindent Let $\psi_n$ be a consistent estimator of $ \psi_0 $. Assume that $(A_1)$-$(A_7)$, $(B_0)$-$(B_4)$ hold. Then, for any sequence of consistent and asymptotically normal estimators $\lbrace\widehat{\gamma}_{0,n}\rbrace_{n\geq 1}$ of $\gamma_0$, under $H_0$, as $n\rightarrow\infty,$ we have, for any $\beta \in \mathbb R^{k+1}$,
$$\Delta_n(\psi_n,{\gamma}_0,\beta)=\Delta_n(\psi_n,\widehat{\gamma}_{0,{N(n) }},\beta)+o_P(1),$$
with $\lbrace N(n)\rbrace_{n\geq 1}$ standing for a subset $\lbrace 1,\ldots,n\rbrace$ such that  $n/N(n)$ tends to 0 as $n$ tends to infinity.
\end{lemma}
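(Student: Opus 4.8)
The plan is to follow the Taylor--expansion scheme already used for Proposition \ref{pr2} and Lemma \ref{lm2}, now expanding the central statistic in its $\gamma$-argument while keeping $\psi_n$ frozen in the first slot. First I would write a first--order (mean--value) Taylor expansion of $\gamma\mapsto\Delta_n(\psi_n,\gamma,\beta)$ evaluated at $\widehat{\gamma}_{0,N(n)}$ around $\gamma_0$: there is a $\widetilde{\gamma}_n$ lying between $\gamma_0$ and $\widehat{\gamma}_{0,N(n)}$ with
$$\Delta_n(\psi_n,\widehat{\gamma}_{0,N(n)},\beta)-\Delta_n(\psi_n,\gamma_0,\beta)=(\widehat{\gamma}_{0,N(n)}-\gamma_0)^{\top}\partial_{\gamma}\Delta_n(\psi_n,\widetilde{\gamma}_n,\beta).$$
Only a first--order expansion is available, since by $(A_2)$ the map $\gamma\mapsto\phi_f[\varepsilon_t(\psi_n,\gamma)]$ is merely $C^1$ with Lipschitzian derivative, but this will suffice. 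A direct computation gives $\partial_{\gamma}\Delta_n(\psi,\gamma,\beta)/\sqrt{n}=-\,n^{-1}\sum_{t=1}^n\frac{(\beta^{\top}\omega(t))\,\omega(t)}{V_{\theta}^2(Z_{t-1})}\phi'_f[\varepsilon_t(\psi,\gamma)]$.

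Next I would bound the two factors separately. Since $\{\widehat{\gamma}_{0,n}\}$ is consistent and asymptotically normal, $\sqrt{N(n)}(\widehat{\gamma}_{0,N(n)}-\gamma_0)=O_P(1)$, whence, because $n/N(n)\to0$,
$$\sqrt{n}\,(\widehat{\gamma}_{0,N(n)}-\gamma_0)=\sqrt{N(n)}\,(\widehat{\gamma}_{0,N(n)}-\gamma_0)\,\sqrt{n/N(n)}=o_P(1).$$
For the other factor I would show that $\partial_{\gamma}\Delta_n(\psi_n,\widetilde{\gamma}_n,\beta)/\sqrt{n}$ converges in probability to a finite limit: splitting the sum above over the blocks $[t_{j-1},t_j)$ and invoking $(A_6)$ together with the ergodic theorem settles the version with the fixed arguments $(\psi_0,\gamma_0)$, and the passage to the random, $n$-dependent arguments $(\psi_n,\widetilde{\gamma}_n)$ --- with $\widetilde{\gamma}_n\to\gamma_0$ in probability, being squeezed between $\gamma_0$ and the consistent estimator $\widehat{\gamma}_{0,N(n)}$ --- is absorbed by the Lipschitz property $(A_2)$ of $\phi'_f$, the lower bound $(B_0)$ on $V_{\theta}$, the smoothness and domination conditions $(B_3)$ on $T_\rho$ and $V_\theta$, and the moment bounds $(A_7)$, $(B_1)$--$(B_3)$, exactly as in the \emph{lengthy but simple algebra} step of the proof of Proposition \ref{pr2}. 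Hence $\partial_{\gamma}\Delta_n(\psi_n,\widetilde{\gamma}_n,\beta)/\sqrt{n}=O_P(1)$, and therefore
$$\Delta_n(\psi_n,\widehat{\gamma}_{0,N(n)},\beta)-\Delta_n(\psi_n,\gamma_0,\beta)=\sqrt{n}\,(\widehat{\gamma}_{0,N(n)}-\gamma_0)^{\top}\,\frac1{\sqrt{n}}\,\partial_{\gamma}\Delta_n(\psi_n,\widetilde{\gamma}_n,\beta)=o_P(1),$$
which is the claim of the lemma.

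A remark on why this is lighter than Proposition \ref{pr2}: there $\psi_{N(n)}$ was inserted into a slot in which the full--sample estimator $\psi_n$ produces a first--order term of order $O_P(1)$, forcing the tangent--space construction of Lemma \ref{lm1}; here, by contrast, the replacement is of $\gamma_0$ by its \emph{slow--subsequence} estimator $\widehat{\gamma}_{0,N(n)}$, whose $\sqrt{n}$-scaled error is already $o_P(1)$ by the display above, so the first--order term vanishes automatically and no tangent space is needed. The only genuine obstacle I anticipate is the uniform--in--$(\psi,\gamma)$ control of $\partial_{\gamma}\Delta_n/\sqrt{n}$ near $(\psi_0,\gamma_0)$ required in order to evaluate it at the random pair $(\psi_n,\widetilde{\gamma}_n)$; this is routine under $(A_1)$--$(A_7)$ and $(B_0)$--$(B_4)$ but demands careful tracking of the Lipschitz and domination bounds, while everything else is bookkeeping with the $o_P/O_P$ calculus and the subsequence relation $n/N(n)\to0$.
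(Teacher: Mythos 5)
Your proof is correct, but it takes a genuinely shorter route than the paper. The paper expands $\gamma\mapsto\Delta_n(\psi_n,\gamma,\beta)$ around the \emph{full-sample} estimator $\widehat{\gamma}_{0,n}$, kills the second-order remainder, then studies $n^{-1/2}\partial_{\gamma}\Delta_n(\psi_n,\widehat{\gamma}_{0,n},\beta)$ through the decomposition $V_{1n}+V_{2n}$ (and further $V_{11n},V_{12n},V_{13n}$, which incidentally invokes $\phi''_f$, beyond what $(A_2)$ literally grants), and finally needs the tangent-space argument of Lemma \ref{lm1} to transfer from $\widehat{\gamma}_{0,n}$ to $\widehat{\gamma}_{0,N(n)}$ --- exactly mimicking the proof of Proposition \ref{pr2}. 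You instead apply the mean-value theorem directly between $\gamma_0$ and the slow-subsequence estimator $\widehat{\gamma}_{0,N(n)}$, note that $\sqrt{n}(\widehat{\gamma}_{0,N(n)}-\gamma_0)=\sqrt{N(n)}(\widehat{\gamma}_{0,N(n)}-\gamma_0)\sqrt{n/N(n)}=o_P(1)$, and only need stochastic boundedness of $n^{-1/2}\partial_{\gamma}\Delta_n(\psi_n,\widetilde{\gamma}_n,\beta)$ at the random intermediate point; your diagnosis of why no tangent space is needed here (the replacement is by $\widehat{\gamma}_{0,N(n)}$, not $\widehat{\gamma}_{0,n}$, so the first-order term vanishes on its own) is exactly right, and is the structural reason the paper's detour is avoidable. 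What the paper's route buys is uniformity of method across Proposition \ref{pr2}, Lemma \ref{lm2} and Lemma \ref{lm3}; what yours buys is economy and the fact that the Lipschitz property of $\phi'_f$ in $(A_2)$ suffices, with no appeal to $\phi''_f$. The one point you must still carry out in full is the $O_P(1)$ control of the scaled gradient at $(\psi_n,\widetilde{\gamma}_n)$: bounding $|\phi'_f[\varepsilon_t(\psi_n,\widetilde{\gamma}_n)]-\phi'_f[\varepsilon_t(\psi_0,\gamma_0)]|\le c_\phi|\varepsilon_t(\psi_n,\widetilde{\gamma}_n)-\varepsilon_t(\psi_0,\gamma_0)|$ and using $(B_0)$, $(B_3)$, $|\phi'_f(x)|\le|\phi'_f(0)|+c_\phi|x|$ and the ergodic theorem blockwise over the $[t_{j-1},t_j)$'s --- this is of the same nature and difficulty as the paper's own treatment of $V_{11n},V_{12n},V_{13n}$, so acknowledging it as routine bookkeeping, as you do, is acceptable at the paper's level of rigor.
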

\begin{proof}
By a first-order Taylor expansion of $\Delta_n(\psi_n, \cdot,\beta)$ around $\widehat{\gamma}_{0,n},$ for some  $\dot{\gamma}_{0,n}$ lying between $\widehat{\gamma}_{0,n}$ and $\gamma_0,$ we have,
\begin{eqnarray*}
\Delta_n(\psi_n,\gamma_0,\beta)&= & \Delta_n(\psi_n,\widehat{\gamma}_{0,n},\beta)-(\widehat{\gamma}_{0,n}-{\gamma}_0)^{\top} \partial_{\gamma} \Delta_n (\psi_n,\widehat{\gamma}_{0,n},\beta)\\
&+& \dfrac{1}{2}(\widehat{\gamma}_{0,n}-{\gamma}_0)^{\top} \partial_{\gamma}^2 \Delta_n(\psi_n,\dot{\gamma}_{0,n},\beta) (\widehat{\gamma}_{0,n}-{\gamma}_0).
\end{eqnarray*}
Proceeding as in the proof of Proposition \ref{pr2}, we have that as $n\rightarrow\infty,$ 
$$({\gamma}_0-\widehat{\gamma}_{0,n})^{\top}  \partial_{\gamma}^2 \Delta_n(\psi_n,\widetilde{\gamma}_{0,n},\beta) ({\gamma}_0-\widehat{\gamma}_{0,n})=o_P(1).$$
Thus, as $n\rightarrow\infty,$ writing
\begin{eqnarray*}
\Delta_n(\psi_n,{\gamma}_0,\beta)&=& \Delta_n(\psi_n,\widehat{\gamma}_{0,n},\beta)+({\gamma}_0-\widehat{\gamma}_{0,N(n)})^{\top} \partial_{\gamma} \Delta_n (\psi_n,\widehat{\gamma}_{0,n},\beta)\\
&+&(\widehat{\gamma}_{0,N(n)}-\widehat{\gamma}_{0,n})^{\top}\partial_{\gamma} \Delta_n (\psi_n,\widehat{\gamma}_{0,n},\beta)+o_P(1)
\end{eqnarray*}
and observing that 
$\sqrt{n}(\gamma_0-\widehat{\gamma}_{0,N(n)})=o_P(1),$
it remains to show that $(1/\sqrt{n})\partial_{\gamma} \Delta_n (\psi_n,\widehat{\gamma}_{0,n},\beta)$ converges in probability to some random vector. In this purpose, for some $\ddot{\gamma}_{0,n}$ lying between $\gamma_0$ and $\widehat{\gamma}_{0,n}, $ we can write 
\begin{eqnarray}
\dfrac{1}{\sqrt{n}} \partial_{\gamma} \Delta_n (\psi_n,\widehat{\gamma}_{0,n},\beta) 
&=&\dfrac{1}{n}\displaystyle\sum_{t=1}^n{ \dfrac{\omega(t) \beta^{\top} \omega(t) }{V_{\theta_n}^2(Z_{t-1})}\phi'_f[ \varepsilon_t(\psi_n,{\gamma}_0)]   }
\notag \\&&
-(\widehat{\gamma}_{0,n}-\gamma_0)\dfrac{1}{n}\displaystyle\sum_{t=1}^n{ \dfrac{\omega(t)\beta^{\top}\omega(t)\omega^{\top}(t) }{V_{\theta_n}^3(Z_{t-1})}\phi''_f[ \varepsilon_t(\psi_n,\ddot{\gamma}_{0,n})]} \notag \\
&=& V_{1n}+V_{2n}
\end{eqnarray}
Under the assumptions $ (B_0) $ and $ (B_4) $ and the fact that $\widehat{\gamma}_{0,n}-\gamma_0$ converges in probability to 0, the ergodic theorem allows us to conclude that, as $n\rightarrow\infty,$ $V_{2n}$ tends in probability to 0. 
Now, adding and subtracting appropriate terms, and using a Taylor expansion, it is easy so see that 
\begin{eqnarray*}
V_{1n}&=&(\theta_n-\theta_0)\dfrac{1}{n}\displaystyle\sum_{t=1}^n{ \dfrac{\omega(t)\beta^{\top}  \omega(t) }{V_{\theta_n}^2(Z_{t-1})}\phi''_f[ \varepsilon_t(\widetilde{\psi}_n,{\gamma}_0)]\partial_{\theta}^{\top} \varepsilon_t (\widetilde{\psi}_n,{\gamma}_0)}\\
&&+(\rho_n-\rho_0)\dfrac{1}{n}\displaystyle\sum_{t=1}^n{ \dfrac{\omega(t)\beta^{\top}\omega(t)   }{V_{\theta_n}^2(Z_{t-1})}\phi''_f[ \varepsilon_t(\widetilde{\psi}_n,{\gamma}_0)]\partial_{\rho}^{\top} \varepsilon_t (\widetilde{\psi}_n,{\gamma}_0)}
%\\&-&
-\dfrac{1}{n}\displaystyle\sum_{t=1}^n{ \dfrac{\omega(t) \beta^{\top} \omega(t) }{V_{\theta_n}^2(Z_{t-1})}\phi'_f[ \varepsilon_t(\psi_0,{\gamma}_0)]   }\\
&=& V_{11n}+V_{12n}+V_{13n}.
\end{eqnarray*}
In view of $(B_0)$ and the ergodic theorem, we can easily show that as $n$ tends to $\infty$, $V_{13n}$ converges in probability to some random vector.
\smallskip

\noindent
For the convergence of the term $V_{12n},$ we can write 
 $$V_{12n}=(\rho_n-\rho_0)\dfrac{1}{n}\displaystyle\sum_{t=1}^n{ \dfrac{\omega(t) \beta^{\top} \omega(t) }{V_{\theta_n}^2(Z_{t-1})}\phi''_f[ \varepsilon_t(\widetilde{\psi}_n,{\gamma}_0)]\dfrac{\partial_{\rho}^{\top} T_{\widetilde{\rho}_n}(Z_{t-1})}{V_{\widetilde{\theta}_n}(Z_{t-1})} },$$
from which we have 
$$||V_{12n}||\leq Cst ||\rho_n-\rho_0||\dfrac{1}{n}\displaystyle\sum_{t=1}^n{ {|\beta^{\top}\omega(t)| }\vartheta(Z_{t-1})}.$$
By the ergodic theorem and the fact that $\rho_n-\rho_0=o_P(1)$, it is easy to see that as $n\rightarrow\infty,$ the right-hand side of the above inequality tends in probability to 0. 
%$$||\rho_n-\rho_0||\dfrac{1}{n}\displaystyle\sum_{t=1}^n{ {|\beta^{\top}\omega(t)| }\vartheta(Z_{t-1})}\overset{P}{\longrightarrow}0.$$
Therefore, as $n\rightarrow\infty,$ $V_{12n}$ tends in probability to 0.
\smallskip

\noindent
For the convergence of $V_{11n},$ we can write 
\begin{eqnarray} \label{v}
V_{11n}&=& -(\theta_n-\theta_0)\dfrac{1}{n}\displaystyle\sum_{t=1}^n \dfrac{\omega(t) \beta^{\top} \omega(t) }{V_{\theta_n}^2(Z_{t-1})}\phi''_f[ \varepsilon_t(\widetilde{\psi}_n,{\gamma}_0)]\partial_{\theta}^{\top}V_{\widetilde{\theta}_n}(Z_{t-1})\nonumber 
%\\&& 
\times
 (\varepsilon_t( \widetilde{\psi}_n,{\gamma}_0)-\varepsilon_t( \psi_0,\gamma_0))
\nonumber \\
&&-(\theta_n-\theta_0)\dfrac{1}{n}\displaystyle\sum_{t=1}^n{ \dfrac{\omega(t)\beta^{\top}\omega(t)   }{V_{\theta_n}^2(Z_{t-1})}\phi''_f[ \varepsilon_t(\widetilde{\psi}_n,{\gamma}_0)]\partial_{\theta}^{\top}V_{\widetilde{\theta}_n}(Z_{t-1})\varepsilon_t( {\psi}_0,{\gamma}_0)}.
\end{eqnarray}
It is easy to see that the first term in the right-hand side of (\ref{v}) is bounded by
\begin{eqnarray*}
%&&
Cst ||\theta_0-\theta_n||^2\dfrac{1}{n}\displaystyle\sum_{t=1}^n{ {|\beta^{\top}\omega(t) | }\vartheta^2(Z_{t-1})|\varepsilon_t(\psi_0,\gamma_0)|}
%\\&&
+Cst ||\theta_n-\theta_0||\times ||\rho_0-\rho_n||\dfrac{1}{n}\displaystyle\sum_{t=1}^n{ {|\beta^{\top}\omega(t) | }\vartheta^2(Z_{t-1})}.
\end{eqnarray*}
Using again the ergodic theorem and the fact that $||\psi_n-\psi_0||$ converges in probability to 0, the right-hand of (\ref{v}) converges in probability to 0.
\smallskip

\noindent
In the same way, by the ergodic theorem, as $n\rightarrow\infty,$ the second term in the right-hand of (\ref{v}) converges almost surely to 0.
Then, as $n\rightarrow\infty,$ $V_{11n}$ tends in probability to 0. It results that $V_{1n}$ tends in probability to 0, as $n\rightarrow\infty.$ 
Consequently, as $n\rightarrow\infty,$  $$(\gamma_0-\widehat{\gamma}_{0,N(n)})^{\top} \partial_{\gamma} \Delta_n (\psi_n,\widehat{\gamma}_{0,n},\beta)=o_P(1),$$
%and 
$$\Delta_n(\psi_n,\gamma_0,\beta)=\Delta_n(\psi_n,\widehat{\gamma}_{0,n},\beta)+(\widehat{\gamma}_{0,N(n)}-\widehat{\gamma}_{0,n})^{\top}\partial_{\gamma}  \Delta_n (\psi_n,\widehat{\gamma}_{0,n},\beta)+o_P(1).$$
Now, for showing that $\widehat{\gamma}_{0,N(n)}$ is asymptotically in the tangent space to the curve of $\Delta_n (\psi_n, \gamma, \beta) $ at $ \widehat {\gamma}_{0, n}$, we proceed as in the proof of Lemma \ref{lm1} and obtain 
$$\Delta_n(\psi_n,\widehat{\gamma}_{0,N(n)},\beta)=\Lambda_n(\psi_n,\widehat{\gamma}_{0,n},\beta)+(\widehat{\gamma}_{0,N(n)}-\widehat{\gamma}_{0,n})^\top \partial_{\gamma}\Delta_n(\psi_n,\widehat{\gamma}_{0,n},\beta)+o_P(1).$$
Consequently, under $H_0,$ as $n\rightarrow\infty,$
$$\Delta_n(\psi_n,{\gamma}_0,\beta)=\Delta_n(\psi_n,\widehat{\gamma}_{0,N(n)},\beta)+o_P(1),$$
where $\lbrace N(n)\rbrace_{n\geq 1}$ stands for a subset $\lbrace 1,\ldots,n\rbrace$ such that $n/N(n)$ tends to 0 as $n$ tends to infinity.
\smallskip

\noindent Back to the proof of the proposition, from Lemma \ref{lm3}, for any estimator $\psi_n$ of $\psi_0$ satisfying $(B_4)$, for any sequence of consistent and asymptotically normal estimators $\lbrace\widehat{\gamma}_{0,N(n)}\rbrace_{n\geq 1}$ of $\gamma_0$, as $n\rightarrow\infty,$ 
%$$\Delta_n(\psi_n,{\gamma}_0,\beta)=\Delta_n(\psi_n,\widehat{\gamma}_{0,{N(n) }},\beta)+o_P(1).$$
%In particular for $\psi_{N(n)}$, we have
$$\Delta_n(\psi_{N(n)},{\gamma}_0,\beta)=\Delta_n(\psi_{N(n)},\widehat{\gamma}_{0,{N(n) }},\beta)+o_P(1),$$
from which, subtracting and adding $\Delta_n(\psi_{N(n)},\gamma_0,\beta)$ we obtain 
\begin{eqnarray*}
\Delta_n(\psi_{N(n)},\widehat{\gamma}_{0,{N(n) }},\beta)-\Delta_n(\psi_0,{\gamma}_0,\beta)
%&= &\Delta_n(\psi_{N(n)},\widehat{\gamma}_{0,{N(n) }},\beta)-\Delta_n(\psi_{N(n)},\gamma_0,\beta)\\
%&+&\Delta_n(\psi_{N(n)},\gamma_0,\beta)-\Delta_n(\psi_0,{\gamma}_0,\beta)\\
&=&\Delta_n(\psi_{N(n)},\gamma_0,\beta)-\Delta_n(\psi_0,{\gamma}_0,\beta)+o_P(1).
\end{eqnarray*}
By Proposition \ref{pr2} and Lemma \ref{lm3}, we have 
%$$\Delta_n(\psi_0,\gamma_0,\beta)=\Delta_n(\psi_{N(n)},\gamma_0,\beta)+o_P(1).$$
%Therefore,
$$\Delta_n(\psi_0,{\gamma}_0,\beta)=\Delta_n(\psi_{N(n)},\widehat{\gamma}_{0,{N(n) }},\beta)+o_P(1).$$
\end{proof}
%
%\medskip

\noindent
{\bf Acknowledgements} - 
We thank 
%the Referees and the Associate Editor for their comments and suggestions which has led to a considerable improvement of this work. We also thank 
Professor Lihua XIONG and Professor Cong JIANG  for providing us with the Upper Hanjiang River floods data. 
\bibliographystyle{apalike}

\end{document}